\documentclass[12pt,a4paper,onecolumn]{article}

\title{Semilinear Dirichlet problem for subordinate spectral Laplacian}

\author{Ivan Bio\v{c}i\'{c}}
\date{}

\usepackage[utf8]{inputenc}
\usepackage{fullpage}

\usepackage{amsmath}
\usepackage{amsthm}
\usepackage{amsfonts}
\usepackage{mathtools}
\usepackage{bbm} 
\usepackage[dvipsnames]{xcolor} 
\usepackage{amssymb} 
\usepackage{enumerate}
\usepackage{enumitem}
\usepackage{comment}
\usepackage{aligned-overset}
\usepackage{yfonts}
\usepackage{cancel,xcolor}

\usepackage{hyperref}
\hypersetup{
	colorlinks=true,
	linkcolor=magenta,
}
\DeclareMathOperator\supp{supp}

\newtheorem{thm}{Theorem}[section]
\newtheorem{prop}[thm]{Proposition}
\newtheorem{cor}[thm]{Corollary}
\newtheorem{defn}[thm]{Definition}
\newtheorem{lem}[thm]{Lemma}
\theoremstyle{definition}
\newtheorem{rem}[thm]{Remark}

\newtheorem*{assumption*}{\assumptionnumber}
\providecommand{\assumptionnumber}{}
\makeatletter
\newenvironment{assumption}[2]
{%
	\renewcommand{\assumptionnumber}{(\textbf{#1#2})}%
	\begin{assumption*}%
		\protected@edef\@currentlabel{(\textbf{#1#2})}%
	}
	{%
	\end{assumption*}
}
\makeatother

\newcommand{\R}{\mathbb{R}}
\newcommand{\N}{\mathbb{N}}
\newcommand{\p}{\mathbb{P}}
\newcommand{\ex}{\mathbb{E}}
\newcommand{\subsub}{\subset\subset}
\newcommand{\1}{\mathbf 1}
\newcommand{\G}{\mathbb{G}}

\newcommand{\LLL}{L^1(D,\de(x)dx)}
\newcommand{\BB}{\mathcal{B}}
\newcommand{\MM}{\mathcal{M}}

\newcommand{\KK}{\mathcal{K}}
\newcommand{\DD}{\mathcal{D}}
\newcommand{\PP}{\mathcal{P}}
\newcommand{\II}{\mathcal{I}}

\newcommand{\Lo}{\phi(-\left.\Delta\right\vert_{D})}
\newcommand{\Loz}{\phi^*(-\left.\Delta\right\vert_{D})}
\newcommand{\LoI}{\left[\Lo\right]^{-1}}
\newcommand{\wLo}{\widetilde{\phi}(-\left.\Delta\right\vert_{D})}

\newcommand{\pLo}{\phi_p(-\left.\Delta\right\vert_{D})}
\newcommand{\pLoz}{\phi^*_p(-\left.\Delta\right\vert_{D})}

\newcommand{\GDfi}{G^\phi_{ D}}
\newcommand{\GDFI}{\GDfi}
\newcommand{\GDfz}{G^{\phi^*}_{ D}}
\newcommand{\GD}{G_{ D}}
\newcommand{\PDfi}{P^\phi_{ D}}
\newcommand{\PDFI}{\PDfi}
\newcommand{\PD}{P_{D}}
\newcommand{\de}{\delta_D}
\newcommand{\wh}{\widehat}
\newcommand{\wt}{\widetilde}
\newcommand{\diam}{\textrm{diam}}
\newcommand{\dist}{\textrm{dist}}
\newcommand{\uu}{\textswab{u}}
\newcommand{\vv}{\textswab{v}}

\numberwithin{equation}{section}

\begin{document}
	
	\maketitle
	\begin{abstract}
		We study semilinear problems in bounded $C^{1,1}$ domains for non-local operators with a boundary condition. The operators cover and extend the case of the spectral fractional Laplacian. We also study harmonic functions with respect to the non-local operator and boundary behaviour of Green and Poisson potentials.
	\end{abstract}
	
	\bigskip
	\noindent {\bf AMS 2020 Mathematics Subject Classification}: Primary 35J61, 35R11; Secondary 35C15, 31B10, 31B25, 31C05, 60J35

	\bigskip\noindent
	{\bf Keywords and phrases}: semilinear differential equations, non-local operators, subordinate killed Brownian motion, harmonic functions
	
	\section{Introduction}
	Let $D\subset \R^d$, $d\ge 2$, be a bounded $C^{1,1}$ domain, $f:D\times \R\to \R$ a function, and $\zeta$ a signed measure on $\partial D$. In this article we study the semilinear problem
	\begin{equation}\label{e:4-intro}
		\begin{array}{rcll}
			\Lo u(x)&=& f(x,u(x))& \quad \text{in } D,\\
			\frac{u}{\PDFI\sigma}&=&\zeta&\quad \text{on }\partial D,
		\end{array}
	\end{equation}
	where $\phi:(0,\infty)\to (0,\infty)$ is a complete Bernstein function without drift satisfying a certain weak scaling conditions. The boundary condition will be described below whereas the operator $\Lo$ can be written in its spectral form as well as a principal value integral:
	$$
	\Lo u(x)=\sum_{j=1}^\infty \phi(\lambda_j)\wh u_j\varphi_j=\textrm{P.V.}\int_{D}(u(x)-u(y))J_D(x,y)\, dy+\kappa(x)u(x), \quad x\in D.
	$$
	Here $(\lambda_j,\varphi_j)_{j\in\N}$ are eigenpairs of the Dirichlet Laplacian in $D$, and  the singular kernel $J_D$ as well as the function $\kappa$ are completely determined by the function $\phi$. This said, $\Lo$ is a non-local operator of elliptic type which in the case $\phi(\lambda)=\lambda^{\alpha/2}$, $\alpha\in (0,2)$,  is the spectral fractional Laplacian $(-\left.\Delta\right\vert_{D})^{\alpha/2}$. The operator $-\Lo$ can be also viewed as the infinitesimal generator of the subordinate killed Brownian motion, where the subordinator has $\phi$ as its Laplace exponent.
	
	The notion of the boundary condition is a bit abstract but, at this point, let us say that it can be understood as a limit at the boundary of $u/\PDFI\sigma$  in the pointwise sense, or in the weak sense of \eqref{eq:distri solution boundary}, depending on the smoothness of the boundary datum, where $\PDFI\sigma$ is a reference function defined as the Poisson potential of the $d-1$ dimensional  Hausdorff measure on $\partial D$.
	
	Motivated by the recent articles \cite{aba+dupaig,semilinear_bvw}, see also the preprint \cite{AGCV19}, we consider solutions of \eqref{e:4-intro} in the weak dual sense, see Definition \ref{d:semilinear problem}, and we prove that the solutions have a special form of a sum of the Green and the Poisson potential, see Theorem \ref{t:linear problem}.
	
	Semilinear problems for the Laplacian have been studied for a long time now. In the monograph \cite{marcus_veron} it is said that this study is at least 50 years old now. However, the study of semilinear problems for non-local operators is quite recent and mostly oriented to the problems driven by the fractional Laplacian, see e.g. \cite{FQ12, CFQ, Aba15a, Aba17, BC17,BCBF16, BCBF18, bogdan_et_al_19, F20}. For more general operators than the fractional Laplacian see e.g. \cite{remarks_on_nonlocal,KKLL} for linear problems, and \cite{semilinear_bvw} for semilinear problems. However, there are just a few articles discussing the semilinear Dirichlet problem for the spectral fractional Laplacian, see \cite{dhifli2012,aba+dupaig}. To the best of our knowledge, this article is the first one to study semilinear problems for spectral-type operators more general than the spectral fractional Laplacian.
	
	A typical difference between the local and the non-local setting is that in the non-local setting even solutions of the linear Dirichlet problem can explode at the boundary whereas in the local setting this does not happen. To be more precise, there exists a harmonic function with respect to $\Lo$ which explodes at the boundary, e.g. the reference function $\PDFI\sigma$ is such one for which we prove the explosion rate, see \eqref{eq:Poisson integral sharp bounds}.
	
	The main goal of this article is to generalize results from \cite{aba+dupaig} where the semilinear problem was studied for the spectral fractional Laplacian, and to generalize results from \cite{semilinear_bvw} to a slightly different type of a non-local operator in the special case of $C^{1,1}$ bounded domain. To achieve this goal, we intensively  use the potential-theoretic and analytic properties of the killed Brownian motion subordinated by a subordinator with the Laplace exponent $\phi$, the process that gives $-\Lo$ as its infinitesimal generator as it is shown in the article. Some of these properties are well known for a long time and belong to the general potential theory. However, some properties are pretty recently proved such as the sharp bounds for the potential kernel and the jumping kernel, the (boundary) Harnack principle, etc., see \cite{ksv_minimal2016,ksv_potential_SKBM_2020}.
	
	Let us now describe the central results of the article which are given in Section \ref{s:semilinear problem}. For the nonlinearity $f$ in \eqref{e:4-intro} in our results we  assume that	
	\begin{assumption}{F}{}
		$f:D\times \R\to\R$ is continuous in the second variable, and there exist a locally bounded function $\rho:D\to[0,\infty]$ and a non-decreasing function $\Lambda:[0,\infty)\to[0,\infty)$ such that $|f(x,t)|\le \rho(x)\Lambda(|t|)$, $x\in D$, $t\in \R$.
	\end{assumption}
	\noindent First, in Proposition \ref{p:Kato's inequality for t+}, we prove Kato's inequality for $\Lo$  using which we develop a method of sub- and supersolution for $\Lo$ in Theorem \ref{t:super/subsolution implies existence}. This theorem directly generalizes \cite[Theorem 32]{aba+dupaig} to our setting of more general non-local operators and also extends \cite[Theorem 3.6]{semilinear_bvw} to slightly different non-local operators. In Theorem \ref{t:t1 semilinear problem} we  prove  the existence of a solution when the nonlinearity $f$ is non-positive and when the boundary measure $\zeta$ is non-negative. This theorem comes as a generalization of \cite[Theorem 8]{aba+dupaig} to our setting of more general non-local operators. Moreover, we consider a more general boundary condition which can also be a measure whereas in \cite[Theorem 8]{aba+dupaig} only continuous functions where considered. The nonlinearity in our theorem is also slightly  more general than the one in  \cite[Theorem 8]{aba+dupaig}. A similar result in a different non-local setting can be found in \cite[Theorem 3.10]{semilinear_bvw}. By the method of monotone iterations, in Theorem \ref{t:semilin non-negative monotone linearity} we find a solution to the semilinear problem when both $f$ and $\zeta$ are non-negative. Finally, for a signed $f$ and a signed $\zeta$, in Theorem \ref{t:semilinear signed datum} we find a solution by the technique used in \cite[Theorem 2.4]{bogdan_et_al_19}. After each theorem, we give a comment on the existence (and non-existence) of a solution in the spectral fractional Laplacian case for the power-like nonlinearity $f$, see Remarks \ref{r:semilin negative rem}, \ref{r:semilin positive rem} and \ref{r:semilin bog rem}.
	
	Let us now give a short summary of the rest of the article. In Section \ref{s:prelims} we introduce assumptions on $\phi$ and recall the known results on the Green kernel. We connect the operator $\Lo$ to the subordinate killed Brownian motion as its infinitesimal generator, give a pointwise characterization of $\Lo$, and study the regularity of the Green potentials. The last part of the section deals with Poisson potentials and harmonic functions. In Proposition \ref{p:Poisson kernel definition} we prove the existence of the Poisson kernel as a normal derivative of the Green function and in Theorem \ref{t:PDFI harmonic} we prove an integral representation formula for non-negative harmonic functions for $\Lo$. We finish the section by proving that harmonic functions are continuous, and by Theorem \ref{t:harmonic functions} in which we show that non-negative harmonic functions are those which satisfy the mean-value property with respect to the subordinate killed Brownian motion.
	
	Section \ref{s:boundary behaviour} deals with the boundary behaviour of potential integrals. Here we emphasize Theorem \ref{t:boundary of GDFIU} which gives the boundary behaviour of the Green potentials. This theorem generalizes \cite[Proposition 7]{dhifli2012} to our setting of more general non-local operators and more general functions.  Furthermore, this theorem with Proposition \ref{p:boundary oper. of Poisson} shows that in some cases the boundary condition \eqref{e:4-intro} can be understood as a limit at the boundary in the pointwise sense. Finally, the section also contains Proposition \ref{p:boundary oper. Poisson L1} and Proposition \ref{p:boundary operator on GDFi} which show that the boundary condition in \eqref{e:4-intro} can be viewed as a limit at the boundary in the weak sense.
	
	Section \ref{s:linear problem} contains the basic properties of the linear Dirichlet problem where we prove that every weak solution of the Dirichlet problem is a sum of the Green and the Poisson potential, see Theorem \ref{t:linear problem}.
	
	Section \ref{s:semilinear problem} contains already described main results.
	
	The article also contains the \hyperref[s:appendix]{Appendix} where  we first provide a proof of the Green function sharp estimate in our setting, see Lemma \ref{apx:l:Green fun est}, modelled upon \cite[Theorem 3.1]{ksv_minimal2016}. We also give a technical proof of Theorem \ref{t:boundary of GDFIU} modelled upon the proof of \cite[Proposition 4.1]{semilinear_bvw}, as well as prove Lemma \ref{apx:l:boundary operator of GDFI 2} which is an additional and a bit lengthy calculation providing an interpretation of the boundary condition. In Subsection \ref{ss:heat kernel regularity} we prove that the heat kernel of a killed Brownian motion upon exiting a $C^{1,\alpha}$ domain is differentiable up to the boundary - a fact that appears to be known but for which we could not find an exact reference.
	\paragraph{Notation.}
	For an open set $D\subset \R^d$: $C(D)$ denotes the set of all continuous functions on $D$, $C^k(D)$ denotes $k$-times ($k\ge 1$) continuously differentiable functions on $D$,  $C^\infty(D)$ infinitely differentiable functions on $D$, and $C_c^\infty(D)$ infinitely differentiable functions with  compact support on $D$, where e.g. we write $C^k(\overline D)$ for the set of functions in $C^k(D)$ whose all derivatives of order less than $k$ have a continuous extension to $\overline D$. For $\alpha\in(0,1]$, by $C^{1,\alpha}(\overline D)$ ($C^{1,\alpha}(D)$) we denote functions in $C^1(\overline D)$ ($C^1(D)$) whose first partial derivatives are uniformly H\"older continuous (locally H\"older continuous) in $D$ with exponent $\alpha$.
	
	Further, $L^1(D,\mu)$ is the set of all integrable functions on $D$, and $L^1_{loc}(D,\mu)$ the set of all locally integrable functions on $D$, with respect to the measure $\mu$ on $D$. If $\mu$ is the standard Lebesgue measure, we write $L^1(D)$ and $L_{loc}^1(D)$. The set $L^2(D)$ denotes square-integrable functions with respect to the Lebesgue measure. The set $H_0^1(D)$ denotes the closure of $C_c^\infty(D)$ with respect to the Sobolev norm in the Sobolev space $W^{1,2}(D)$ - the set of $L^2(D)$ functions whose weak partial derivatives belong to $L^2(D)$. The set $\BB(\R^d)$ denotes Borel sets in $\R^d$. The set $\MM(\partial D)$ and $\MM(D)$ denote Radon (signed) measures on $\partial D$ and $D$, respectively. We assume that all functions in the article are Borel functions, and all (signed) measures are Borel measures. Furthermore, in what follows  when we say $\nu$ is a measure,  we mean that $\nu$ is a non-negative measure on $\R^d$. By $|\nu|$ we denote the total variation of a signed measure $\nu$, and the Dirac measure of a point $x\in\R^d$ is denoted by $\delta_x$.
	
	The boundary of the set $D$ is denoted by $\partial D$. Notation $U\subsub D$ means that $U$ is a  nonempty  bounded open set such that $U\subset \overline U\subset D$ where $\overline U$ denotes  the  closure of $U$. By $|x|$ we denote the Euclidean norm of $x\in\R^d$ and $B(x,r)$ denotes  the  ball around $x\in\R^d$ with radius  $r>0$. For $A,B\subset \R^d$ let $\mathrm{dist}(A,B)=\inf\{|x-y|:x\in A,\,y\in B\}$ and $\diam D=\sup\{|x-y|:x,y\in D\}$. Unimportant constants in the article will be denoted by small letters $c$, $c_1$, $c_2$, $\dots$, and their labeling starts anew in each new statement. By a big letter $C$ we denote some more important constants, where e.g. $C(a,b)$ means that the constant $C$ depends only on parameters $a$ and $b$. All constants are positive finite numbers. For two positive functions $f$ and $g$ we write $f\asymp g$ ($f\lesssim g$, $f\gtrsim g$) if there exist a finite positive constant $c$ such that $c^{-1} f\le g\le c\,f$ ($f\le c\,g$, $f\ge c g$). Finally, $a\wedge b=\min\{a,b\}$ and $a\vee b=\max\{a,b\}$.

	\section{Preliminaries}\label{s:prelims}
	Let $(W_t)_{t\ge0}$ be a Brownian motion in $\R^d$, $d\ge2$, with the characteristic exponent $\xi\mapsto |\xi|^2$, $\xi\in\R^d$. Let $D$ be a non-empty open set, and $\tau_D\coloneqq \inf\{t>0:W_t\notin D\}$ the first exit time from the set $D$. We define the killed process $W^D$ upon exiting the set $D$ by
	\begin{align*}
		W^D_t\coloneqq\begin{cases}
			W_t,&t<\tau_D,\\
			\partial, &t\ge \tau_D,
		\end{cases}
	\end{align*}
	where $\partial$ is an additional point added to $\R^d$ called the cemetery.
	
	Let $S$ be a subordinator independent of $W$, i.e. $S$ is an increasing L\'evy process such that $S_0=0$, with the Laplace exponent
	\begin{align}\label{eq:Laplace exponent defn}
		\lambda\mapsto \phi(\lambda)=b\lambda+\int_{0}^\infty (1-e^{-\lambda\,t})\mu(dt),
	\end{align}
	where $b\ge0$ and the measure $\mu$ satisfies $\int_0^\infty (1\wedge t)\mu(dt)<\infty$. The measure $\mu$ is called the L\'evy measure and $b$ the drift of the subordinator. 
	
	The process $X=((X_t)_{t\ge0},(\p_x)_{x\in D})$ defined by $X_t\coloneqq W^D_{S_t}$ is called the subordinate killed Brownian motion. Here $\p_x$ denotes the probability under which the process $X$ starts from $x\in D$, and by $\ex_x$ we denote the corresponding expectation.

	\subsection{Assumptions}
	The first assumption that we impose throughout the article concerns the set $D$. Although some results will be valid for general open sets, we always assume that $D$ is a bounded $C^{1,1}$ domain.
	
	The second assumption concerns the Laplace exponent $\phi$, i.e. the subordinator $S$. A function of the form \eqref{eq:Laplace exponent defn} is called a Bernstein function, see \cite[Theorem 3.2]{bernstein}, and such functions characterize subordinators, see \cite[Chapter 5]{bernstein}.
	
	We impose the following assumption on $\phi$ throughout the article.
	\begin{assumption}{WSC}{}\label{A1}
		The function $\phi$ is a complete Bernstein function, i.e. the L\'evy measure $\mu(dt)$ has a completely monotone density $\mu(t)$, and  $\phi$ satisfies the following weak scaling condition at infinity: There exist $a_1,a_2>0$ and $\delta_1, \delta_2  \in(0,1)$ satisfying
		\begin{align}\label{eq:scaling condition}
			a_1\lambda^{\delta_1}\phi(t)\le \phi(\lambda t)\le a_2\lambda^{\delta_2}\phi(t),\quad t\ge1, \lambda\ge 1.
		\end{align}
	\end{assumption}

	The best-known subordinator with the property \ref{A1} is the $\alpha$-stable subordinator where $\phi(\lambda)=\lambda^{\alpha/2}$, for some $\alpha\in(0,2)$, which satisfies exact (and even global) scaling condition \eqref{eq:scaling condition}. However, there are many other interesting subordinators that fall into our setting. For a short list of these, see e.g. \cite[p. 3]{KSV_bhpinf}.
	
	Allow us to give some comments on the assumptions above. Since we assume \ref{A1}, the function $\phi^*(\lambda)\coloneqq \frac{\lambda}{\phi(\lambda)}$ is a complete Bernstein function, too, see \cite[Proposition 7.1]{bernstein}, and $\phi^*$ is called the conjugate Bernstein function of $\phi$. We easily see that \eqref{eq:scaling condition} also holds for $\phi^*$ but with different constants $a_1$, $a_2$, and $\delta_1$ and $\delta_2$, thus \ref{A1} also holds for $\phi^*$. By $\nu(dt)=\nu(t)dt$ we denote the L\'evy measure of $\phi^*$.

	In what follows we discuss properties of $\phi$ and the same will hold for $\phi^*$ or, to be more precise, for the counterparts of the function $\phi^*$.
	By a simple calculation, the scaling condition \eqref{eq:scaling condition} implies that $b=0$ in \eqref{eq:Laplace exponent defn} and the well-known bound
	\begin{align}\label{eq:scaling and the derivative}
		\phi'(\lambda)\asymp \frac{\phi(\lambda)}{\lambda}, \quad\lambda\ge1,
	\end{align}
	where, in fact, the upper bound holds for every Bernstein function and every $\lambda>0$, and the lower bounds follows from \eqref{eq:scaling condition}. We use \eqref{eq:scaling and the derivative} many times throughout the article. The L\'evy measure $\mu(dt)$ is infinite, see \cite[p. 160]{bernstein}, and the density $\mu(t)$ cannot decrease too fast, i.e. there is $c=c(\phi)>1$ such that
	$$ \mu(t)\le c\mu(t+1), \quad t\ge 1,$$
	see \cite[Lemma 2.1]{ksv_twosided}. Moreover, it holds that
	\begin{align}\label{eq:Levy density upper bound}
		\mu(t)\le (1-2e^{-1})^{-1}\frac{\phi'(t^{-1})}{t^2},\enskip t>0,\quad\text{and}\quad\mu(t)\ge c\,\frac{\phi(t^{-1})}{t^2},\enskip 0<t\le M,
	\end{align}
	for $M>0$ and $c=c(\phi,M)>0$, see \cite[Eq. (2.13)]{ksv_minimal2016} and \cite[Proposition 3.3]{mimica}.
	
	The potential measure $U$ of the subordinator $S$, defined by $U(A)\coloneqq \int_0^\infty \p(S_t\in A)dt$, $A\in \BB(\R)$, has a decreasing density $\uu$  which satisfies $\int_0^1 \uu(t)dt<\infty$, see \cite[Theorem 11.3]{bernstein}. In addition, it holds that
	\begin{align}\label{eq:potent density upper bound}
		\uu(t)\le (1-2e^{-1})^{-1}\frac{\phi'(t^{-1})}{t^2\phi(t^{-1})^2},\enskip t>0,\quad\text{and}\quad\uu(t)\ge c\frac{\phi'(t^{-1})}{t^2\phi(t^{-1})^2},\enskip 0<t\le M,
	\end{align}
	for $M>0$ and $c=c(\phi,M)>0$,	see \cite[Eq. (2.11)]{ksv_minimal2016} and \cite[Proposition 3.4]{mimica}. The potential density of $\phi^*$ will be denoted by $\vv$.
	
	It is worth noting that for a general Bernstein function a version of a  global scaling condition holds
	\begin{align}\label{eq:simple global scaling}
		1\wedge \lambda \le \frac{\phi(\lambda \, t)}{\phi(t)}\le 1\vee  \lambda,\quad \lambda>0,t>0,
	\end{align}
	which we get directly from \eqref{eq:Laplace exponent defn}.

	In \cite{ksv_minimal2016,ksv_potential_SKBM_2020} important aspects of the potential theory of the process $X$ were developed such as the scale invariant Harnack principle and the boundary Harnack principle. Our assumption \ref{A1}  implies (A1)-(A4) but not (A5) from \cite{ksv_minimal2016,ksv_potential_SKBM_2020} so each time we use a result from \cite{ksv_minimal2016,ksv_potential_SKBM_2020} we will explain how the assumption (A5) can be avoided.

	In the article the case $d = 1$ is excluded since it would require a somewhat different potential theoretic methods.
	
	\subsection{Green function}
	Let us denote the transition density of the Brownian motion $W$ by
	\begin{align}\label{eq:trans.dens.BM}
		p(t,x,y)=(4\pi t)^{-d/2}e^{-\frac{|x-y|^2}{4t}},\quad x,y\in\R^d,\, t>0.
	\end{align}
	Then the transition density of the killed Brownian motion $W^D$ is given by
	\begin{align}\label{eq:trans.dens. KBM}
		p_D(t,x,y)=p(t,x,y)-\ex_x[p(t-\tau_D,W_{\tau_D},y)\1_{\{\tau_D<t\}}],\quad x,y\in\R^d.
	\end{align}
	It is well known that $p_D(t,\cdot,\cdot)$  is symmetric and  it seems that it is  known that $p_D(\cdot,\cdot,\cdot)\in C^1((0,\infty)\times \overline D\times \overline D)$ since $D$ is a $C^{1,1}$ open domain. However, as we were unable to find an exact reference for the regularity up to the boundary of the transition density, we prove it in the Appendix in Lemma \ref{apx:l:regularity of heat kernel}. Furthermore, the following heat kernel estimate holds: There exist constants $T_0=T_0(D)>0$, $c_1=c_1(T_0,D)>0$, $c_2=c_2(T_0,D)>0$, $c_3=c_3(D)>0$, and $c_4=c_4(D)>0$ such that for all $x,y\in D$ and $t\in(0,T_0]$ it holds that
	\begin{align}\label{eq:heat kernel estimate}
		\left[\frac{\de(x)\de(y)}{t}\wedge 1\right]\frac{1}{c_1 	t^{d/2}}e^{-\frac{|x-y|^2}{c_2t}}\le p_D(t,x,y)\le \left[\frac{\de(x)\de(y)}{t}\wedge 1\right]\frac{c_3}{t^{d/2}}e^{-\frac{c_4|x-y|^2}{t}}.
	\end{align}
	We note that the right hand side inequality in \eqref{eq:heat kernel estimate} holds for every $t>0$. For the proofs see \cite[Theorem 3.1 \& Theorem 3.8]{song_sharp_bounds_2004}, cf. \cite[Theorem 1.1]{zhang_heat_kernel} and \cite[Theorem 4.6.9]{davies_HeatKernel}. Moreover, in \cite[Remark 3.3]{song_sharp_bounds_2004} one can find an explanation why the lower bound in \eqref{eq:heat kernel estimate} cannot hold uniformly for every $t>0$.
	
	The semigroup $(P_t^D)_{t\ge 0}$ of the process $W^D$ is given by
	\begin{align}\label{eq:Brownian semigroup}
		P_t^D f(x)=\int_Dp_D(t,x,y)f(y)dy=\ex_x[f(W_t);t<\tau_D]=\ex_x[f(W^D_t)],\enskip f\in L^\infty(D),
	\end{align}
	where $f(\partial)=0$ for all Borel functions on $D$ by convention. It is well known that the semigroup $(P_t^D)_{t\ge 0}$ is  strongly Feller since $D$ is $C^{1,1}$, i.e. $P_t^D(L^\infty)\subset C_b(D)$, and can be uniquely extended to a $L^2(D)$ semigroup. For details see e.g. \cite[Chapter 2]{chung_zhao}.

	The potential kernel of $W^D$ (or the Green function of $W^D$) is defined as
	\begin{align*}
		\GD(x,y)=\int_0^\infty p_D(t,x,y)dt,\quad x,y\in\R^d.
	\end{align*}
	The kernel $\GD$ is symmetric, non-negative, finite off the diagonal and jointly continuous in the extended sense, see \cite[Theorem 2.6]{chung_zhao}, and it is the density of the mean occupation time for $W^D$, i.e. for $f\ge0$ we have
	$$ \int_D \GD(x,y)f(y)dy=\ex_x\left[\int_0^\infty f(W^D_t)dt\right], \quad x\in D.$$
	
	Since $X$ is obtained by subordinating the killed Brownian motion $W^D$, it is well known that the $L^2(D)$ transition semigroup of $X$, denoted by $(Q_t^D)_t$, is given for $t>0$ by
	
	\begin{align*}
		Q_t^D f=\int_0^\infty P_s^Df \,\p(S_t\in ds),\quad f\in L^2(D),
	\end{align*}
	see \cite[Proposition 13.1]{bernstein}. Thus, $Q_t^D$ admits the density 
	\begin{align*}
		q_D(t,x,y)=\int_0^\infty p_D(s,x,y)\;\p(S_t\in ds).
	\end{align*}
	The semigroup $(Q_t^D)_t$ is also strongly Feller since $(P_t^D)_t$ is, see \cite[Proposition V.3.3]{bliedtner}.
	The process $X$ has the potential kernel (i.e. the Green function of $X$) which is given by
	\begin{align}\label{eq:defn Green function 1}
		\GDfi(x,y)=\int_0^\infty q_D(t,x,y)dt=\int_0^\infty p_D(t,x,y)\uu(t)dt,\quad x,y\in\R^d.
	\end{align}
	The kernel $\GDFI$ is symmetric, non-negative, and by the bound \eqref{eq:heat kernel estimate} finite off the diagonal. Moreover, $\GDFI$ is the density of the mean occupation time for $X$, i.e. for $f\ge0$ we have
	$$ \int_D \GDFI(x,y)f(y)dy=\ex_x\left[\int_0^\infty f(X_t)dt\right], \quad x\in D.$$
	The closed form of $\GDFI$ is not known, but in \cite[Theorem 3.1]{ksv_minimal2016} the sharp estimate was obtained, i.e. we have
	\begin{align}\label{eq:Green function sharp estimate}
		\GDfi(x,y)\asymp \left(\frac{\de(x)\de(y)}{|x-y|^{2}}\wedge 1\right)\frac{1}{|x-y|^{d}\phi(|x-y|^{-2})},\quad x,y\in D,
	\end{align}
	where the constant of comparability depends only on $d$, $D$ and $\phi$. We note that the usage of the transience assumption (A5) from \cite{ksv_minimal2016} in \cite[Theorem 3.1]{ksv_minimal2016} can be avoided, see Lemma \ref{apx:l:Green fun est} in  the Appendix for the details. Further, by using the upper bound of \eqref{eq:heat kernel estimate} and the bounds \eqref{eq:Green function sharp estimate}, we can repeat the proof of \cite[Proposition 3.3]{ksv_minimal2016} to get that $\GDFI$ is infinite on the diagonal and jointly continuous in the extended sense in $D\times D$.
	
	By the characterization of Bernstein functions the conjugate Bernstein function $\phi^*$ generates a subordinator $(T_t)_{t\ge0}$, see \cite[Chapter 5]{bernstein}. From the previous subsection it follows that $(T_t)_{t\ge0}$ has a potential measure which also has the decreasing density  which we denote by $V(dt)=\vv(t)dt$, see \cite[Theorem 11.3 \& Corollary 11.8]{bernstein}. We define the potential kernel generated by $\phi^*$ with
	\begin{align}\label{eq:defn Green function 2}
		\GDfz(x,y)=\int_0^\infty p_D(t,x,y)\vv(t)dt,\quad x,y\in\R^d.
	\end{align}
	Since $\phi^*$ satisfies \ref{A1}, $\GDfz$ is also symmetric, finite off the diagonal, jointly continuous in extended sense $D\times D$ and satisfies the sharp bound \eqref{eq:Green function sharp estimate} where $\phi$ is replaced by $\phi^*$. Of course, the kernel $\GDfz$ can be viewed as the potential kernel of the process $(W_{T_t}^D)_{t\ge 0}$.
	
	The kernels $\GD$, $\GDFI$ and $\GDfz$ are also connected by the following well-known factorization.
	\begin{lem}\label{l: GDf(GDf*)=GD}
		For $x,y\in D$ it holds that
		\begin{align}\label{eq: GDf(GDf*)=GD}
			\int_D \GDfi(x,\xi)\GDfz(\xi,y)d\xi=\GD(x,y).
		\end{align}
	\end{lem}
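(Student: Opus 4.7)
The plan is to unfold both Green kernels using their heat-kernel representations \eqref{eq:defn Green function 1} and \eqref{eq:defn Green function 2}, reduce the spatial integral via the Chapman--Kolmogorov equation for $p_D$, and then identify a convolution of potential densities. Concretely, I would start from
\[
\int_D \GDfi(x,\xi)\GDfz(\xi,y)\, d\xi=\int_D\int_0^\infty\!\!\int_0^\infty p_D(s,x,\xi)\,\uu(s)\,p_D(r,\xi,y)\,\vv(r)\, ds\, dr\, d\xi.
\]
All integrands are non-negative, so Tonelli applies and one may bring the $\xi$-integral inside. The semigroup property of the killed Brownian motion (Chapman--Kolmogorov for $p_D$) gives $\int_D p_D(s,x,\xi)p_D(r,\xi,y)\,d\xi = p_D(s+r,x,y)$, so after the substitution $t=s+r$ we obtain
\[
\int_D \GDfi(x,\xi)\GDfz(\xi,y)\, d\xi=\int_0^\infty p_D(t,x,y)\left(\int_0^t \uu(s)\vv(t-s)\,ds\right)dt.
\]

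The next step is to identify the inner convolution $(\uu*\vv)(t)$. Since $U$ and $V$ are the potential measures of subordinators with Laplace exponents $\phi$ and $\phi^*$, their Laplace transforms are $\widehat{U}(\lambda)=1/\phi(\lambda)$ and $\widehat{V}(\lambda)=1/\phi^*(\lambda)=\phi(\lambda)/\lambda$. Hence
\[
\widehat{U*V}(\lambda)=\widehat{U}(\lambda)\widehat{V}(\lambda)=\frac{1}{\lambda},
\]
which is the Laplace transform of Lebesgue measure on $[0,\infty)$. By uniqueness of Laplace transforms, $U*V$ equals Lebesgue measure, and differentiating gives $(\uu*\vv)(t)=1$ for a.e. $t>0$. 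Plugging this back yields $\int_0^\infty p_D(t,x,y)\,dt=\GD(x,y)$, which is the desired identity.

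The only delicate point is justifying Tonelli and the use of Chapman--Kolmogorov for fixed $x,y\in D$; this is harmless because $p_D\ge 0$ and $\uu,\vv\ge 0$, while the symmetry and the sharp heat-kernel bound \eqref{eq:heat kernel estimate} guarantee that the successive integrals appearing are the finite quantities $\GDfi(x,\xi)$, $\GDfz(\xi,y)$ and $\GD(x,y)$ already known to be well-defined. The identification of $(\uu*\vv)(t)$ with the constant $1$ via the elementary Laplace-transform computation $\phi\cdot\phi^*=\mathrm{id}$ is the conceptual heart of the argument; no additional estimate from \ref{A1} is needed.
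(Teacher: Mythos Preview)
Your proof is correct. The paper's own proof is a one-line citation to \cite[Proposition~14.2(ii)]{bernstein} with $\gamma=\delta_y$, and your argument is exactly the computation underlying that proposition: Chapman--Kolmogorov collapses the spatial integral to $p_D(s+r,x,y)$, and the identity $U*V=\mathrm{Leb}$ for the potential measures of conjugate subordinators (equivalently $(\uu*\vv)(t)=1$ a.e.) does the rest. So you have unpacked what the paper defers to a reference; nothing is genuinely different, and your remark that the scaling condition \ref{A1} plays no role here is accurate --- only the existence of the densities $\uu,\vv$, which follows from the complete Bernstein assumption, is needed.
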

	\begin{proof}
		The claim follows from \cite[Proposition 14.2(ii)]{bernstein} where we set $\gamma=\delta_y$.
	\end{proof}
	
	\subsection{Operator $\Lo$}\label{ss:operator}
	Let ${\{\varphi_j\}}_{j\in\N}$ 
	be a Hilbert basis of $L^2(D)$ consisting of eigenfunctions 
	of the Dirichlet Laplacian $\left.-\Delta\right\vert_D$, 
	associated to the eigenvalues $\lambda_j$, $j\in\N$, i.e.  $\varphi_j\in H^1_0(D)\cap C^{ \infty}(D)\cap C^{1,1}(\overline D)$ and
	\begin{align}\label{eq:defn of eigenvalues}
		\left.-\Delta\right\vert_D\varphi_j=\lambda_j\varphi_j, \quad \text{in $D$},
	\end{align}
	see \cite[Theorem 9.31]{brezis} and \cite[Section 8.11]{gilbarg_pde}. Here \eqref{eq:defn of eigenvalues} can be viewed in various equivalent ways, e.g. as a distributional or a pointwise relation. Also,  $\left.\Delta\right\vert_D$ in \eqref{eq:defn of eigenvalues} can be viewed as the $L^2(D)$-infinitesimal generator of the semigroup $(P_t^D)_t$, i.e.
	\begin{align*}
		\left.\Delta\right\vert_D u=\lim_{t\to 0}\frac{P_t^D u-u}{t}, \quad u\in \DD(\left.\Delta\right\vert_D),
	\end{align*}
	where $\DD(\left.\Delta\right\vert_D)$ is the domain of the generator $\left.\Delta\right\vert_D$ and the limit is taken with respect to $L^2(D)$ norm. We note that $\DD(\left.\Delta\right\vert_D)$ is a class of functions $f\in H_0^1(D)$ such that $\Delta f$ exists in the weak distributional sense and belongs to $L^2(D)$, see \cite[Theorem 2.13]{chung_zhao}.  For more details, see \cite[Chapter 2]{chung_zhao} and \cite[Chapter 9]{brezis}. Note that since $\varphi_j$ is an eigenfunction,  we have
	\begin{align}\label{eq:Pt=e{-lt}}
		P_t^D\varphi_j=e^{-\lambda_j t}\varphi_j,
	\end{align}
	see \cite[Lemma 7.10]{schilling_brownian}.
	Further, since we assume that $D$ is $C^{1,1}$, it is well known that $0<\lambda_1< \lambda_2\le\lambda_3\le \dots$, and by the Weyl's asymptotic law we have 
	\begin{align}\label{eq:Weyl's law}
		\lambda_j \asymp j^{2/d},\quad j\in \N.
	\end{align}
	Also, we choose the basis $\{\varphi_j\}_{j\in \N}$ such that $\varphi_1>0$ in $D$, see \cite[Chapter 9]{brezis}. Hence, another very important sharp estimate for $\varphi_1$ holds:
	\begin{align}\label{eq:varphi=de sharp estimate}
		\varphi_1(x)\asymp \de(x),\quad x\in D.
	\end{align}
	The interior estimate is trivial since $\varphi_1$ is smooth and positive. The boundary bound follows from Hopf's lemma, see e.g. \cite[Hopf's lemma in Section  6.4.2]{evans_pde}.
	
	Consider the Hilbert space
	\begin{align*}
		H_D(\phi)\coloneqq \left\{v=\sum_{j=1}^\infty \widehat v_j\varphi_j\in L^2(D):\|v\|_{H_D(\phi)}^2\coloneqq\sum_{j=0}^\infty\phi(\lambda_j)^{2}\vert\widehat v_j\vert^2<\infty\right\}.
	\end{align*}
	The spectral operator $\Lo:H_D(\phi)\to L^2(D)$ is defined as
	\begin{align}\label{eq:definition of phi(Delta D)}
		\Lo u=\sum_{j=1}^\infty\phi(\lambda_j)\widehat u_j\varphi_j, \quad u\in H_D(\phi).
	\end{align}
	Note that $H_D(\phi)\hookrightarrow L^2(D)$ and we will show in the next proposition that $C_c^\infty(D)\subset H_D(\phi)$, see \eqref{eq:Ccinfty coef bound}. Now it is  obvious that $\Lo$ is  an unbounded operator, densely defined in $L^2(D)$ and  has the bounded inverse $\LoI : L^2(D)\to H_D(\phi)$ given by
	\begin{align}\label{eq:defn of inverse operator}
		\LoI u=\sum_{j=1}^\infty \frac{1}{\phi(\lambda_j)}\widehat{u}_j\varphi_j,\quad u\in L^2(D).
	\end{align}
	
	In the next proposition we prove that a potential relative to $\GDFI$ is the inverse of $\Lo$. The proof is similar to \cite[Lemma 9]{aba+dupaig} but we give the complete proof for the reader's convenience since some elements of the proof will be important in what follows.
	\begin{prop}\label{p:LGDf=-f}
		Let $f\in L^2(D)$. For a.e. $x\in D$ it holds that $\GDfi(x,\cdot)f(\cdot)\in L^1(D)$ and
		\begin{align}\label{eq:LGDf=-f}
			\LoI f(x)=\int_D\GDfi(x,y)f(y)dy.
		\end{align}
	\end{prop}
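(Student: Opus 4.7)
The plan is to combine the time-integral representation $\GDFI(x,y)=\int_0^\infty p_D(t,x,y)\uu(t)\,dt$ from \eqref{eq:defn Green function 1} with the spectral action $P_t^D\varphi_j=e^{-\lambda_j t}\varphi_j$. The key bridge is the Laplace-transform identity $\int_0^\infty e^{-\lambda t}\uu(t)\,dt=1/\phi(\lambda)$ for $\lambda>0$, which is standard and follows from $\ex[e^{-\lambda S_s}]=e^{-s\phi(\lambda)}$ integrated over $s\in(0,\infty)$.

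For the $L^1$ claim, the sharp bound \eqref{eq:Green function sharp estimate} together with a polar change of variables centered at $x$ gives
\begin{equation*}
\sup_{x\in D}\int_D \GDFI(x,y)\,dy\le c_1\int_0^{\diam D}\frac{dr}{r\,\phi(r^{-2})},
\end{equation*}
which is finite by \ref{A1} (since $\phi(r^{-2})\gtrsim r^{-2\delta_1}$ for $r\le 1$) and monotonicity of $\phi$ for $r\in[1,\diam D]$. Since $D$ is bounded, $f\in L^2(D)\subset L^1(D)$, and Fubini applied to $|f|$ shows $\GDFI(x,\cdot)f(\cdot)\in L^1(D)$ for a.e.\ $x\in D$ and simultaneously justifies the representation
\begin{equation*}
\int_D \GDFI(x,y)f(y)\,dy=\int_0^\infty \uu(t)\,P_t^D f(x)\,dt \quad \text{for a.e.\ } x\in D.
\end{equation*}

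For the spectral identification, testing on an eigenfunction yields
\begin{equation*}
\int_0^\infty \uu(t)\,P_t^D\varphi_j(x)\,dt=\varphi_j(x)\int_0^\infty e^{-\lambda_j t}\uu(t)\,dt=\frac{\varphi_j(x)}{\phi(\lambda_j)}=\LoI\varphi_j(x),
\end{equation*}
so \eqref{eq:LGDf=-f} holds pointwise for every finite partial sum $g_N=\sum_{j=1}^N\widehat{f}_j\varphi_j$, and $g_N\to f$ in $L^2(D)$. Both sides of \eqref{eq:LGDf=-f} are bounded linear maps $L^2(D)\to L^2(D)$: the right side by \eqref{eq:defn of inverse operator} with operator norm $1/\phi(\lambda_1)$, the left side by a Schur-type estimate, since Cauchy--Schwarz combined with the uniform bound of the previous paragraph yields $\bigl|\int_D \GDFI(x,y)f(y)\,dy\bigr|^2\le c_2\int_D \GDFI(x,y)f(y)^2\,dy$, which upon integration in $x$ gives $L^2$-boundedness with norm at most $c_2$. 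The two operators therefore agree on the dense subspace $\mathrm{span}\{\varphi_j:j\in\N\}$, hence on all of $L^2(D)$, and \eqref{eq:LGDf=-f} holds a.e. The main technical point is the Fubini step that identifies $\int_D \GDFI(x,y)f(y)\,dy$ with $\int_0^\infty \uu(t)\,P_t^D f(x)\,dt$; this is clean precisely because of the uniform $L^1$ bound on $y\mapsto \GDFI(x,y)$.
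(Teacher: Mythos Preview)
Your proof is correct and takes a more streamlined route than the paper's. The paper first establishes the identity for $\varphi_1$, deduces that $\GDFI(x,\cdot)\in L^1(D)$ for a.e.\ $x$, extends to all $\varphi_j$ via the bound $\|\varphi_j/\de\|_\infty\le (C\lambda_j)^k$, shows the map $f\mapsto\GDFI f$ is an $L^2\to H_D(\phi)$ isometry on $\mathrm{span}\{\varphi_j\}$, then passes through $C_c^\infty(D)$ (using the coefficient decay \eqref{eq:Ccinfty coef bound} to get uniform convergence of partial sums) before finally reaching general $f\in L^2(D)$ via approximation in $\LLL$. You instead extract the uniform bound $\sup_x\int_D\GDFI(x,y)\,dy<\infty$ directly from the sharp estimate \eqref{eq:Green function sharp estimate}, which immediately gives the $L^1$ claim and a Schur bound for $\GDFI:L^2\to L^2$; after the eigenfunction computation, density of $\mathrm{span}\{\varphi_j\}$ and continuity of both operators close the argument in one step. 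Your approach is shorter and avoids the $C_c^\infty$ detour; the paper's route, on the other hand, produces as by-products the inclusion $C_c^\infty(D)\subset H_D(\phi)$ and the coefficient bound \eqref{eq:Ccinfty coef bound}, both of which are reused later in the article.
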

	\begin{proof}
		First we prove \eqref{eq:LGDf=-f}  for $f=\varphi_1\ge0$. Fubini's theorem yields
		\begin{align}\label{eq:LGD fi = - fi}
			\begin{split}
				\int_D\GDfi(x,y)\varphi_1(y)dy&=\int_0^\infty \uu(t)\int_D p_D(t,x,y)\varphi_1(y)dydt=\int_0^\infty e^{-\lambda_1 t}\varphi_1(x)\uu(t)dt\\&=\frac{1}{\phi(\lambda_1)}\varphi_1(x)=\LoI\varphi_1(x),\quad \text{for a.e. $x\in D $,}
			\end{split}
		\end{align}
		where in the  second equality we used \eqref{eq:Pt=e{-lt}}, in the third \cite[Eq. (5.20)]{bernstein}, and in the last equality \eqref{eq:defn of inverse operator}.
		By the elliptic regularity there exist constants $C=C(d,D)$ and $k=k(d)$ such that $\|\nabla \varphi_j\|_{L^\infty(D)}\le (C\lambda_j)^k\|\varphi_j\|_{L^2(D)}=(C\lambda_j)^k$, see \eqref{eq:eigen estimates 2}. Recall that $\varphi_j\in C^{1,1}(\overline D)$ and that $\varphi_j$ vanishes on the boundary so the mean value theorem implies 
		\begin{align}\label{eq:eigenfunctions bounds}
			\left\|\frac{\varphi_j}{\de}\right\|_{L^\infty(D)}\leq(C\lambda_j)^k.
		\end{align}
		Since $\varphi_1\asymp \de$, by the previous inequality, Fubini's theorem, and the same calculations as in \eqref{eq:LGD fi = - fi}, we have that \eqref{eq:LGDf=-f} holds for every $\varphi_j$, $j\in \N$. By linearity the same is true for the linear span of $\{\varphi_j:j\in\N\}$.
		
		Let 
		\begin{align}\label{eq:GDf=int}
			\G f(x)\coloneqq \int_D \GDfi(x,y)f(y)dy,
		\end{align}
		for $f\in L^2(D)$ and $x\in D$ such that the integral exists. In what was proved, $\G f(x)$ is well defined for every $f\in \textrm{span}\{\varphi_j:j\in\N\}$ and a.e. $x\in D$. Moreover, from $\G\varphi_j=\frac{1}{\phi(\lambda_j)}\varphi_j=\LoI \varphi_j$ it follows that for $f\in \textrm{span}\{\varphi_j:j\in\N\}$ we have
		\begin{align}
			\| \G f\|^2_{H_D(\phi)}=\|f\|^2_{L^2(D)}.
		\end{align}
		Hence, the map $f\mapsto \G f$ uniquely extends to a linear isometry from $L^2(D)$ to $H_D(\phi)$ which coincides with $\LoI$. Further, a consequence of \eqref{eq:LGD fi = - fi} is that $\GDFI(x,\cdot)\in L^1(D)$ for a.e. $x\in D$ since by Fubini's theorem
		$$\int_D\left(\int_D\GDFI(x,y)dy\right)\varphi_1(x)dx =\frac{1}{\phi(\lambda_1)}\int_D \varphi_1(y)dy<\infty.$$
		
		Next we prove that \eqref{eq:LGDf=-f} holds a.e. in $D$ for $f=\psi=\sum_{j=1}^\infty\wh\psi_j\varphi_j \in C_c^\infty(D)$. 	
		Take the approximating sequence $f_n=\sum_{j=1}^n\wh\psi_j\varphi_j$, $n\in \N$, and note that $\G f_n=\LoI f_n\to \LoI f=\G f$ in $L^2(D)$ since $f_n\to f$ in $L^2(D)$. Moreover, by integrating by parts $m\in \N$ times we get
		\begin{align*}
			\widehat\psi_j=\int_D\psi(x)\varphi_j(x)dx=\frac{(-1)^m}{\lambda_j^m}\int_D\Delta^m\psi(x)\varphi_j(x)dx,
		\end{align*}
		which implies 
		\begin{align}\label{eq:Ccinfty coef bound}
			|\wh\psi_j|\le \frac{\|\Delta^m\psi\|_{L^2(D)}}{\lambda_j^m}\eqqcolon C(m,\psi)\frac{1}{\lambda^m_j}.
		\end{align}
		Hence, by using \eqref{eq:Weyl's law},  \eqref{eq:eigenfunctions bounds}, and \eqref{eq:Ccinfty coef bound} for large enough $m\in \N$, it follows that $f_n$ converges uniformly in $D$ to $f=\psi$. This implies that $\G f_n=\int_D \GDFI(\cdot,y)f_n(y)dy\to \int_D \GDFI(\cdot,y)f(y)dy$ a.e. in $D$ since $\GDfi(x,\cdot)\in L^1(D)$ for a.e. $x\in D$. Thus, by uniqueness of the  limit $\G f=\LoI f=\int_D \GDFI(\cdot,y)f(y)dy$ a.e. in $D$.
		
		Take now $f\in L^2(D)$, and let $(f_n)_n\subset C_c^\infty(D)$ which converges to $f$  in $L^2(D)$. Hence,  $\G f_n=\LoI f_n\to \LoI f=\G f$ in $L^2(D)$. On the other hand, 
		\begin{align*}
			\int_D\left|\int_D \GDFI(x,y)(f_n(y)-f(y))dy\right|\varphi_1(x)dx&\le \frac{1}{\phi(\lambda_1)}\int_D\varphi_1(y)|f_n(y)-f(y)|dy\\
			&\le \frac{1}{\phi(\lambda_1)}||f_n-f||_{L^2(D)}\to 0,
		\end{align*}
		which shows that $\GDFI(\cdot,y)f(y)\in L^1(D)$ a.e. in $D$ and by taking the subsequence we get $$\G f_n=\int_D \GDFI (\cdot,y )f_n(y)dy\to \int_D \GDFI (\cdot,y )f(y)dy\quad\text{a.e. in $D$,}$$ thus $\LoI f=\int_D\GDFI(\cdot,y)f(y)dy$ a.e. in $D$.
	\end{proof}
	In what follows, for the operator $\G$ from the proof of the previous lemma we will write
	\begin{align}\label{eq:defn of GDf}
		\GDfi f(x)\coloneqq\int_D\GDfi(x,y)f(y)dy=\G f(x),\quad x\in D.
	\end{align}
	
	\begin{rem}\label{r:infin. genera of X}
		Proposition \ref{p:LGDf=-f} implies that $\GDfi\big(L^2(D)\big)=H_D(\phi)$ and that $$\Lo(\GDfi f)=f,\quad f\in L^2(D).$$
		By the general theory of semigroups, this means that $-\Lo$ defined by \eqref{eq:definition of phi(Delta D)} is the infinitesimal generator of the semigroup $(Q^D_t)_t$ and that $H_\phi(D)$ is the domain of $-\Lo$, see e.g. \cite{yosida_functional_analysis,sato_potential_operators}. In particular, $\DD(\left.\Delta\right|_D)\subset\DD\big(-\Lo\big)= H_\phi(D)$ by \cite[Theorem 13.6]{bernstein}.
		
		Further, we note that $C^{1,1}(\overline D)\subset \DD(\left.\Delta\right\vert_D)$. Indeed, since the first partial derivatives of  a $C^{1,1}(\overline D)$ function are Lipschitz functions, the first partial derivatives have the second partial derivatives almost everywhere. Furthermore, these second partial derivatives are in $L^\infty(D)$ since the first ones satisfy the Lipschitz property uniformly in $D$. By \cite[Theorem 2.13]{chung_zhao} we get $C^{1,1}(\overline D)\subset \DD(\left.\Delta\right\vert_D)$. Hence, by the first part of this remark, we have $C_c^\infty(D)\subset C^{1,1}(\overline D)\subset\DD(\left.\Delta\right|_D)\subset\DD\big(-\Lo\big)= H_\phi(D)$. 
	\end{rem}

	For sufficiently regular functions $\Lo u$ can be expressed pointwisely. At this point we only consider $u\in C^{1,1}(D)\cap \DD(\left.\Delta\right|_D)$ but later on in Proposition \ref{p:Lo various ways} we will prove the pointwise representation of $\Lo$ for $u\in C^{1,1}(D)\cap H_\phi(D)$.
	\begin{lem}\label{l:Lo pointwisely}
		Let $u\in C^{1,1}(D)\cap \DD(\left.\Delta\right|_D)$. Then for a.e. $x\in D$
		\begin{align}\label{eq:Lo pointwisely}
			\Lo u(x)=\textrm{P.V.}\int\limits_D [u(x)-u(y)]J_D(x,y)dy + \kappa(x)u(x),
		\end{align}
		where
		\begin{align*}
			J_D(x,y)\coloneqq \int_0^\infty p_D(t,x,y)\mu(t)dt,\quad \kappa(x)\coloneqq\int_0^\infty \left(1-\int_Dp_D(t,x,y)dy\right)\mu(t)dt.
		\end{align*}
		In particular, \eqref{eq:Lo pointwisely} holds for $u\in C_c^\infty(D)$.
	\end{lem}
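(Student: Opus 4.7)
The plan is to invoke Phillips' subordination formula for the generator $-\Lo$ and then unpack the resulting time integral into the claimed principal-value representation. By Remark \ref{r:infin. genera of X}, $-\Lo$ is the $L^2(D)$-infinitesimal generator of the subordinate semigroup $(Q_t^D)_{t\ge 0}$, and assumption \ref{A1} forces the drift of the subordinator to vanish ($b=0$). Phillips' theorem for subordinate semigroups then yields, for every $u \in \DD(\Delta|_D)$,
\begin{align*}
\Lo u(x) = \int_0^\infty \bigl(u(x) - P_t^D u(x)\bigr)\,\mu(dt),\qquad \text{for a.e. } x\in D.
\end{align*}
Writing $u(x) - P_t^D u(x) = \int_D (u(x)-u(y))\,p_D(t,x,y)\,dy + \kappa_t(x)\,u(x)$ with $\kappa_t(x) := 1 - \int_D p_D(t,x,y)\,dy = \p_x(\tau_D \le t) \ge 0$, Tonelli (applied separately to $u^{\pm}$) immediately produces the killing term $\kappa(x)\,u(x)$ upon integration in $t$.

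It then remains to identify the remaining double integral with $\mathrm{P.V.}\int_D (u(x)-u(y))J_D(x,y)\,dy$. I would fix $x\in D$, choose $r_0\in(0,\de(x)/2)$ so that $B(x,2r_0)\subset D$, and for $\varepsilon\in(0,r_0)$ split the $y$-integration into $D\setminus B(x,\varepsilon)$ and $B(x,\varepsilon)$. On the outer region, the bound $J_D(x,y)\le J(x-y)$ is uniform on $\{|y-x|\ge\varepsilon\}$ and $u\in L^1(D)$ (since $u\in\DD(\Delta|_D)\subset H^2(D)\cap H^1_0(D)$ by elliptic regularity on the $C^{1,1}$ domain), so the integrand is absolutely integrable and Fubini converts this piece to the $\varepsilon$-truncation $\int_{D\setminus B(x,\varepsilon)}(u(x)-u(y))J_D(x,y)\,dy$ of the principal value.

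On the inner ball $B(x,\varepsilon)$ I would use the $C^{1,1}(D)$ regularity of $u$ to Taylor expand $u(x)-u(y)= -\nabla u(x)\cdot(y-x) + R(x,y)$ with $|R(x,y)|\le C|y-x|^2$. The quadratic remainder contributes at most $C\int_0^\infty (\varepsilon^2\wedge t)\,\mu(dt)$, which tends to $0$ as $\varepsilon\to 0$ by dominated convergence (with dominating function $(t\wedge 1)+\mathbf{1}_{\{t>1\}}\in L^1(\mu)$). For the linear term I split $p_D = p - r_D$: reflection symmetry of the free kernel $p(t,x,\cdot)$ about $x$ annihilates the $p$-contribution exactly, while for $y\in B(x,r_0)$ the bound $|W_{\tau_D}-y|\ge r_0$ a.s.~forces $r_D(t,x,y)\lesssim \p_x(\tau_D<t)\lesssim e^{-c\de(x)^2/t}$ for $t\le 1$ (Gaussian concentration of Brownian motion), and $r_D\lesssim 1$ for $t>1$. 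Combined with \eqref{eq:Levy density upper bound} and the scaling \eqref{eq:scaling condition}, the integral of this bound against $\mu$ is finite, so the linear-$r_D$ contribution is of order $\varepsilon^{d+1}$ and vanishes as $\varepsilon\to 0$. Sending $\varepsilon\to 0$ produces \eqref{eq:Lo pointwisely}; the $C_c^\infty(D)$ case follows because $C_c^\infty(D)\subset C^{1,1}(D)\cap\DD(\Delta|_D)$.

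The main obstacle will be the linear term on the inner ball: the reflection symmetry of the free heat kernel is essential to cancel the leading-order contribution, and the killing correction $r_D$ must be controlled by a factor decaying exponentially in $1/t$ to survive integration against the singular Lévy measure $\mu$, which in turn relies on the scaling assumption \ref{A1}.
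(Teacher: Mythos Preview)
Your proposal is correct and follows essentially the same route as the paper: Phillips' formula $\Lo u=\int_0^\infty(u-P_t^Du)\mu(dt)$, a second-order Taylor expansion near $x$, the splitting $p_D=p-r_D$ so that reflection symmetry of the free heat kernel annihilates the linear-in-$(y-x)$ contribution from $p$, and a pointwise bound on $r_D(t,x,y)$ coming from $|W_{\tau_D}-y|\ge c\,\de(x)$ to control the remaining piece. The only differences are organizational: the paper first verifies that the principal-value integral exists (via the add--subtract trick on a fixed ball $B(x,\delta)$) and then proves the identity by pushing the limit through the $\mu(dt)$ integral, whereas you merge these two steps; and the paper bounds $p(t-\tau_D,W_{\tau_D},y)\1_{\{\tau_D<t\}}$ directly by $c(1\wedge t)$ rather than going through $\p_x(\tau_D<t)$.
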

	
	\begin{rem}\label{r:about JD(x,y)}
		The function $J_D$ is called the jumping density and the function $\kappa$ is called the killing function of the process $X$.
		Obviously, $J_D$ is non-negative and symmetric. It is also finite off the diagonal and satisfies $\int_D \big(1\wedge |x-y|^2\big)J_D(x,y)dy<\infty$ since the following estimate holds
		\begin{align}\label{eq:jumping kernel sharp estimate}
			J_D(x,y)\asymp\left(\frac{\de(x)\de(y)}{|x-y|^{2}}\wedge 1\right)\frac{\phi(|x-y|^{-2})}{|x-y|^{d}},\quad x,y\in D.
		\end{align}
		Here the constant of comparability depends only on $d$, $D$ and $\phi$ and the proof of \eqref{eq:jumping kernel sharp estimate} is essentially the same as the proof of \eqref{eq:Green function sharp estimate}. By applying comments given for the proof of \cite[Proposition 3.5]{ksv_minimal2016} and using similar manipulations as in the proof of Lemma \ref{apx:l:Green fun est} to avoid using (A5) from \cite{ksv_minimal2016}, we easily obtain \eqref{eq:jumping kernel sharp estimate}, so we skip the proof.

		The killing function $\kappa$ is continuous and $\kappa\in\LLL$. Indeed, since the semigroup $P_t^D$ is strongly Feller, $1-P_t^D\1(x)=\p_x(\tau_D\le t)$ is continuous in $x$. Further, for $\varepsilon>0$ such that $\varepsilon<2\de(x)$ it holds that $\p_x(\tau_D\le t)\le\p_x(\tau_{B(x,\varepsilon)}\le t)=\p_0(\tau_{B(0,1)}\le \frac{t}{\varepsilon^2})\le  c_1(\varepsilon)(1\wedge t)$, where the last inequality follows by e.g.  \cite[Theorem 1]{pei_hsu_BrownianExitDistribution}. Now the dominated convergence theorem yields the continuity of $\kappa$. Finally, $\int_D\kappa(x) \varphi_1(x)dx=\phi(\lambda_1)\int_D\varphi_1(x)dx$ by \eqref{eq:Pt=e{-lt}}, so  \eqref{eq:varphi=de sharp estimate} yields $\kappa \in \LLL$.
	\end{rem}
	\begin{proof}[\textbf{Proof of Lemma \ref{l:Lo pointwisely}}]		
		It is known that for all $u\in \DD(\left.\Delta\right|_D)$ it holds that
		\begin{align}\label{eq:pointwise Lo - Bochner way}
			\Lo u=\int_0^\infty \big(u-P_t^D u\big)\mu(t)dt
		\end{align}
		see \cite[Theorem 13.6]{bernstein}, since $\DD(\left.\Delta\right|_D) \subset \DD(-\Lo)= H_\phi(D)$ by Remark \ref{r:infin. genera of X}. The rest of the proof is dedicated to showing that the right hand sides of \eqref{eq:pointwise Lo - Bochner way} and \eqref{eq:Lo pointwisely} are equal. 
		
		Let $u\in C^{1,1}(D)\cap \DD(\left.\Delta\right|_D) $ and $x\in D$.  First, we show that the principal value integral in \eqref{eq:Lo pointwisely} is well defined. Indeed, fix $\delta>0$ such that $\delta<(1\wedge \de(x)/4)$ and let $\varepsilon>0$ such that $\varepsilon<\delta$. We have
		\begin{align*}
			&\int_{D\setminus B(x,\varepsilon)}\big(u(x)-u(y)\big)J_D(x,y)dy\\
			&\qquad\qquad\qquad\qquad=\int_{D\setminus B(x,\varepsilon)}\big(u(x)-u(y)+\nabla u(x)\cdot(y-x)\1_{B(x,\delta)}(y)\big)J_D(x,y)dy\\
			&\qquad\qquad\qquad\qquad\qquad\qquad\qquad-\int_{B(x,\delta)\setminus B(x,\varepsilon)}\nabla u(x)\cdot(y-x)J_D(x,y)dy
			\\&\qquad\qquad\qquad\qquad=I_1-I_2.
		\end{align*}
		By a $C^{1,1}$ version of Taylor's theorem we have 
		\begin{align}\label{eq:zzz bound}
			|u(x)-u(y)+\nabla u(x)\cdot(y-x)\1_{B(x,\delta)}(y)|\le c_1\,(1\wedge |x-y|^2),
		\end{align}
		where $c_1>0$ depends on $\delta$ and $\|u\|_{C^{1,1}(B(x,\de(x)/2))}$. Hence, the integral $I_1$ is finite and converges as $\varepsilon\to0$ by dominated convergence theorem.

		For the second integral, by Fubini's theorem  and  \eqref{eq:trans.dens. KBM}, we have
		\begin{align*}
			I_2&=\int_0^\infty \int_{B(x,\delta)\setminus B(x,\varepsilon)}\nabla u(x)\cdot(y-x)p(t,x,y)dy\mu(t)dt\\&\qquad\qquad-\int_0^\infty \int_{B(x,\delta)\setminus B(x,\varepsilon)}\nabla u(x)\cdot(y-x)\ex_x[p(t-\tau_D,W_{\tau_D},y)\1_{\{\tau_D<t\}}]dy\,\mu(t)dt\\
			&\eqqcolon J_1-J_2.
		\end{align*}
		The integral $J_1$ is zero for all $\varepsilon<\delta$ since the kernel $p(t,x,y)$ is symmetric in $y$ around $x$, and since the region of integration is symmetric around $x$. For the integral $J_2$ note that $|\nabla u(x)\cdot(y-x)|\le c_2\delta$, $y\in B(x,\delta)$, where $c_2=c_2(u)=\max_{B(x,\de(x)/2)}|\nabla u(x)|$, i.e. $c_2$ depends on local properties of $u$ around $x$. Also,
		\begin{align}\label{eq:zzz1 bound}
			p(t-\tau_D,W_{\tau_D},y)\1_{\{\tau_D<t\}}\le \frac{(4\pi)^{-d/2}}{(t-\tau_D)^{d/2}}e^{-\frac{\de(x)^2}{16(t-\tau_D)}}\1_{\{\tau_D<t\}}\le c_3(1\wedge t),\quad y\in B(x,\delta),
		\end{align}
		where $c_3=c_3(d,\de(x))>0$. Thus,
		\begin{align}\label{eq:zzz2 bound}
			\int_{B(x,\delta)\setminus B(x,\varepsilon)}|\nabla u(x)\cdot(y-x)|\,\ex_x[p(t-\tau_D,W_{\tau_D},y)\1_{\{\tau_D<t\}}]dy\le c_4\delta^{d+1} (1\wedge t), \quad t>0,
		\end{align}
		where $c_4=c_4(d,D,u,\de(x))>0$. In other words, we showed that
		\begin{align*}
			|I_2|\le c_6 \delta^{d+1},
		\end{align*}
		where $c_6=c_6(d,D,u,\de(x),\mu)>0$. Moreover, the bounds \eqref{eq:zzz1 bound} and \eqref{eq:zzz2 bound} imply that the integral $J_2$ converges as $\varepsilon\to0$  by the dominated convergence theorem. Hence, $I_2$ converges as $\varepsilon\to 0$. Finally, this means that the principal value integral in \eqref{eq:Lo pointwisely} is well defined.

		Now we prove \eqref{eq:Lo pointwisely}. For the fixed $\delta>0$ from above, by using \eqref{eq:pointwise Lo - Bochner way} we have
		\begin{align*}
			&\Lo u(x)=\int_0^\infty\big(u(x)-u(x)P_t^D\1(x)+u(x)P_t^D\1(x)-P_t^D u(x)\big)\mu(t)dt\\
			&=\int_0^\infty\left(\int_D\big(u(x)-u(y)\big)p_D(t,x,y)dy\right)\mu(t)dt+\kappa(x)u(x)\\
			&=\int_0^\infty\left(\lim_{\varepsilon\searrow0}\int_{D\setminus B(x,\varepsilon)}\big(u(x)-u(y)+\nabla u(x)\cdot (y-x)\1_{B(x,\delta)}(y)\big)p_D(t,x,y)dy\right)\mu(t)dt\\
			&\qquad\qquad-\int_0^\infty\left(\lim_{\varepsilon\searrow0}\int_{B(x,\delta)\setminus B(x,\varepsilon)}\big(\nabla u(x)\cdot (y-x)\big)p_D(t,x,y)dy\right)\mu(t)dt+\kappa(x)u(x)\\
			&=\lim_{\varepsilon\searrow0}\int_{D\setminus B(x,\varepsilon)}\big(u(x)-u(y)\big)J_D(x,y)+\kappa(x)u(x),
		\end{align*}
		where the change of the order of integration, as well as taking the limit outside the integral, was justified by \eqref{eq:zzz bound}, \eqref{eq:zzz1 bound} and \eqref{eq:zzz2 bound}.
	\end{proof}
	
	\begin{rem}
		Lemma	\ref{l:Lo pointwisely} suggest the pointwise definition of the operator $\Lo$, i.e. we define
		\begin{align}\label{eq:Lo pointwisely defn}
			\pLo u(x)=\textrm{P.V.}\int_D [u(x)-u(y)]J_D(x,y)dy + \kappa(x)u(x),
		\end{align}
		for every function $u$ and $x\in D$ for which \eqref{eq:Lo pointwisely defn} is well defined. E.g. this is true for every $x\in D$ if $u\in C^{1,1}(D)\cap \LLL$ by the proof of Lemma \ref{l:Lo pointwisely} and the bound \eqref{eq:jumping kernel sharp estimate}.
	\end{rem}
	
	To conclude the subsection, we bring the well-known factorization of the Dirichlet Laplacian $-\left.\Delta\right\vert_{D}$ which is closely related to Lemma \ref{l: GDf(GDf*)=GD}. Since $\phi^*$ satisfies \ref{A1}, the operator $\Loz$ can be defined in the same way as $\Lo$, and the same properties hold for $\Loz$. In what follows, such comments on the objects defined relative to $\phi$ and relative to $\phi^*$ will be skipped.
	\begin{lem}\label{l: Delta=phi(Delta)phi(Delta)}
		For $\psi\in C_c^\infty(D)$, it holds that 
		\begin{align*}
			\Lo \circ \Loz \psi = \Loz \circ \Lo\psi=(-\left.\Delta\right\vert_{D})\psi,\quad \text{a.e. in $D$.}
		\end{align*}
		Further, $(-\left.\Delta\right\vert_{D})\psi=-\Delta\psi$.
	\end{lem}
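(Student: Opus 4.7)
The proof is essentially a spectral calculation, using the identity $\phi(\lambda)\phi^*(\lambda)=\lambda$ for every $\lambda>0$ (valid since $\phi^*(\lambda)=\lambda/\phi(\lambda)$ by definition), together with the fact that for $\psi\in C_c^\infty(D)$ the Fourier coefficients $\widehat\psi_j$ decay faster than any polynomial in $\lambda_j$.

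The plan is as follows. First, I would expand $\psi=\sum_{j=1}^\infty \widehat\psi_j\varphi_j$ in $L^2(D)$ and invoke \eqref{eq:Ccinfty coef bound} to obtain, for every $m\in\N$, a constant $C(m,\psi)$ such that $|\widehat\psi_j|\le C(m,\psi)\lambda_j^{-m}$. Combined with Weyl's law \eqref{eq:Weyl's law} and the upper bound $\phi(\lambda_j)\le a_2 \phi(1)\lambda_j^{\delta_2}$ (and the analogous bound for $\phi^*$) coming from \eqref{eq:scaling condition}, this decay is strong enough to place $\psi$ in $H_D(\phi)\cap H_D(\phi^*)$, which also follows directly from Remark \ref{r:infin. genera of X}.

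Next, using the definition \eqref{eq:definition of phi(Delta D)} applied to $\phi^*$,
\begin{equation*}
\Loz\psi=\sum_{j=1}^\infty\phi^*(\lambda_j)\widehat\psi_j\varphi_j\quad\text{in }L^2(D).
\end{equation*}
I would check that this function lies in $H_D(\phi)$: since $\phi(\lambda_j)\phi^*(\lambda_j)=\lambda_j$, its $H_D(\phi)$-norm squared equals $\sum_j\lambda_j^2|\widehat\psi_j|^2$, which is finite by the rapid decay of $\widehat\psi_j$. Hence $\Lo$ may be applied termwise, yielding
\begin{equation*}
\Lo(\Loz\psi)=\sum_{j=1}^\infty\phi(\lambda_j)\phi^*(\lambda_j)\widehat\psi_j\varphi_j=\sum_{j=1}^\infty\lambda_j\widehat\psi_j\varphi_j.
\end{equation*}
The same computation with the roles of $\phi$ and $\phi^*$ exchanged gives $\Loz(\Lo\psi)=\sum_j\lambda_j\widehat\psi_j\varphi_j$, so both compositions agree.

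To identify this series with $(-\left.\Delta\right\vert_D)\psi$, I would use that $C_c^\infty(D)\subset \DD(\left.\Delta\right\vert_D)$ (Remark \ref{r:infin. genera of X}) and the spectral representation of the Dirichlet Laplacian: for $u\in\DD(\left.\Delta\right\vert_D)$ one has $(-\left.\Delta\right\vert_D)u=\sum_j\lambda_j\widehat u_j\varphi_j$ in $L^2(D)$, which is the $\phi(\lambda)=\lambda$ case of \eqref{eq:definition of phi(Delta D)}. This gives the first claim. For the second claim, $-\left.\Delta\right\vert_D\psi=-\Delta\psi$ pointwise since for $\psi\in C_c^\infty(D)$ the $L^2$-generator action coincides with the classical Laplacian (the distributional Laplacian of a smooth compactly supported function equals its pointwise Laplacian, and both represent the same $L^2$ element).

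I do not expect any serious obstacle: the only delicate point is the justification that termwise application of $\Lo$ and $\Loz$ is legitimate, which is handled once one verifies the finiteness of the relevant weighted $\ell^2$-norms from the rapid decay \eqref{eq:Ccinfty coef bound} of the eigen-coefficients and the polynomial bounds on $\phi(\lambda_j)$ and $\phi^*(\lambda_j)$. Everything else is the elementary identity $\phi\cdot\phi^*=\mathrm{id}$ at each eigenvalue.
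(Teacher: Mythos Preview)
Your argument is correct. The spectral computation you outline---using the rapid decay \eqref{eq:Ccinfty coef bound}, verifying membership in $H_D(\phi)$ and $H_D(\phi^*)$, and then exploiting $\phi(\lambda_j)\phi^*(\lambda_j)=\lambda_j$ termwise---goes through without difficulty, and the identification with $(-\left.\Delta\right\vert_D)\psi=-\Delta\psi$ on $C_c^\infty(D)$ is standard.

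The paper takes a shorter route: it simply invokes \cite[Corollary 13.25]{bernstein}, which is the abstract factorization $\phi(-A)\circ\phi^*(-A)=-A$ on $\DD(A)$ for a generator $A$ of a $C_0$-semigroup and conjugate Bernstein functions $\phi,\phi^*$, together with $C_c^\infty(D)\subset\DD(\left.\Delta\right\vert_D)$. Your explicit eigenfunction calculation is precisely what that abstract result reduces to in the present situation where the generator has pure point spectrum; so the two proofs are mathematically the same, yours being self-contained while the paper's outsources the one-line spectral identity to a general reference.
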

	\begin{proof}
		Recall that the operator $\left.\Delta\right\vert_{D}$ is the infinitesimal generator of the semigroup $(P_t^D)_t$ which on $C_c^\infty (D)$ functions acts like the standard Laplacian $\Delta$. Hence, the claim follows from \cite[Corollary 13.25]{bernstein} since $C_c^\infty(D)\subset \DD(\left.\Delta\right\vert_{D})$.
	\end{proof}

	\subsection{Green potentials}
	In this subsection we prove some useful identities related to the Green potentials, develop some integrability conditions and prove two regularity properties for $\GDFI f$.
	
	The next lemma says that the definition of the Green potential $\GDFI f$ in \eqref{eq:defn of GDf} makes sense for $f\in \LLL$, too, and that the operator $f\mapsto \GDFI f$ is bounded from $\LLL$ to itself.

	\begin{lem}\label{l:rate of GDdelta}
		It holds that
		\begin{align}\label{eq:rate of GDdelta}
			\GDfi\de(x)\asymp \de(x),\quad x\in D,
		\end{align}
		where the constant of comparability depends only on $d$, $D$ and $\phi$.
		Further, if $\lambda\in\MM(D)$ such that $\int_D\de(x)|\lambda|(dx)<\infty$ then
		\begin{align}\label{eq:rate of GDfi lambda}
			x\mapsto \GDFI\lambda(x)\coloneqq \int_D\GDFI(x,y)\lambda(dy)\in \LLL,
		\end{align}
		and there is $C=C(d,D,\phi)\ge 1$ such that $\|\GDFI \lambda\|_{\LLL}\le C \int_D\de(x)|\lambda|(dx)$.
	\end{lem}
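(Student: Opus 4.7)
The plan is to first establish \eqref{eq:rate of GDdelta} by reducing the computation on $\de$ to the corresponding computation on the first eigenfunction $\varphi_1$ and then deduce \eqref{eq:rate of GDfi lambda} from \eqref{eq:rate of GDdelta} by Fubini's theorem and the symmetry of $\GDfi$.

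For \eqref{eq:rate of GDdelta}, I would start from the heat kernel representation \eqref{eq:defn Green function 1}. By Tonelli's theorem,
\begin{align*}
  \GDfi\de(x) = \int_D\!\!\int_0^\infty p_D(t,x,y)\uu(t)\,dt\,\de(y)\,dy = \int_0^\infty \uu(t)\,P_t^D\de(x)\,dt,\quad x\in D.
\end{align*}
By \eqref{eq:varphi=de sharp estimate} there exists $c\ge 1$ with $c^{-1}\varphi_1\le \de\le c\,\varphi_1$ on $D$, and since $P_t^D$ is positivity preserving this gives $c^{-1} P_t^D\varphi_1 \le P_t^D\de \le c\,P_t^D\varphi_1$ pointwise. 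Combining with \eqref{eq:Pt=e{-lt}} and the formula $\int_0^\infty e^{-\lambda t}\uu(t)\,dt = 1/\phi(\lambda)$ from \cite[Eq.~(5.20)]{bernstein} (exactly as used in the derivation of \eqref{eq:LGD fi = - fi}), one gets
\begin{align*}
  \GDfi\de(x)\asymp \varphi_1(x)\int_0^\infty e^{-\lambda_1 t}\uu(t)\,dt = \frac{\varphi_1(x)}{\phi(\lambda_1)},\quad x\in D,
\end{align*}
which together with \eqref{eq:varphi=de sharp estimate} yields $\GDfi\de(x)\asymp \de(x)$ on $D$, the comparison constant depending only on $d$, $D$ and $\phi$.

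For \eqref{eq:rate of GDfi lambda}, I would first observe that $\GDfi|\lambda|(x)=\int_D \GDfi(x,y)|\lambda|(dy)$ is well defined (possibly infinite) and Borel, and dominates $|\GDfi\lambda(x)|$. Using the symmetry of $\GDfi$ and Tonelli's theorem,
\begin{align*}
  \int_D \GDfi|\lambda|(x)\,\de(x)\,dx
  = \int_D\Bigl(\int_D \GDfi(x,y)\de(x)\,dx\Bigr)|\lambda|(dy)
  = \int_D \GDfi\de(y)\,|\lambda|(dy).
\end{align*}
Applying \eqref{eq:rate of GDdelta} to the inner integral, this is comparable to $\int_D \de(y)|\lambda|(dy)$, which is finite by hypothesis. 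Hence $\GDfi|\lambda|$, and a fortiori $\GDfi\lambda$, is finite for $\de(x)dx$-a.e.\ $x$ and belongs to $\LLL$, with $\|\GDfi\lambda\|_{\LLL}\le C\int_D\de(y)|\lambda|(dy)$ for a constant $C=C(d,D,\phi)$ inherited from the comparison in \eqref{eq:rate of GDdelta}.

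I do not expect any serious obstacle: the one subtlety is that \eqref{eq:LGD fi = - fi} was stated only for a.e.\ $x\in D$, whereas here I need the estimate pointwise on $D$; this is remedied by working directly with the heat-kernel representation $\int_0^\infty \uu(t)P_t^D\de(x)\,dt$, which by \eqref{eq:heat kernel estimate} and the integrability of $\uu$ is a bona fide absolutely convergent integral for every $x\in D$, so the pointwise sandwich via $\varphi_1$ goes through without any null-set caveats.
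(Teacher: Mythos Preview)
Your proposal is correct and follows essentially the same route as the paper: both arguments reduce $\GDfi\de$ to $\GDfi\varphi_1$ via the sharp estimate $\varphi_1\asymp\de$ and positivity, then use the eigenfunction identity \eqref{eq:LGD fi = - fi} (or equivalently the heat-kernel computation that yields it) to get $\GDfi\varphi_1=\varphi_1/\phi(\lambda_1)$, and both deduce the second and third claims from Fubini's theorem and the symmetry of $\GDfi$. Your remark about the a.e.\ caveat in \eqref{eq:LGD fi = - fi} is harmless here, since the chain of integral identities in that display holds for every $x\in D$; the a.e.\ qualifier there pertains only to the final identification with the $L^2$-object $\LoI\varphi_1$.
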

	\begin{proof}
		Recall that $\varphi_1(x)\asymp \de(x)$, $x\in D$, by \eqref{eq:varphi=de sharp estimate}, thus by \eqref{eq:LGD fi = - fi}
		\begin{align*}
			\GDfi \de(x)\asymp\GDfi \varphi_1(x)=\frac{1}{\phi(\lambda_1)}\varphi_1(x)\asymp\de(x),\quad x\in  D.
		\end{align*}
		The second and the third claim follow from Fubini's theorem and \eqref{eq:rate of GDdelta}.
	\end{proof}
	
	\begin{cor}\label{c:GDf finite if f in L1}
		There is $C=C(d,D,\phi)>0$ such that for every $f\in\LLL$ it holds that $\|\GDFI f\|_{\LLL}\le C \|f\|_{\LLL}$.
	\end{cor}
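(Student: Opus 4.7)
\medskip

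The plan is to reduce this directly to the measure-theoretic bound in the second part of Lemma \ref{l:rate of GDdelta}. Given $f\in \LLL$, I would consider the signed Radon measure $\lambda(dy)\coloneqq f(y)\,dy$ on $D$, whose total variation measure is $|\lambda|(dy)=|f(y)|\,dy$. The finiteness hypothesis of Lemma \ref{l:rate of GDdelta} is then satisfied because
\[
\int_D \de(y)\,|\lambda|(dy)=\int_D \de(y)|f(y)|\,dy=\|f\|_{\LLL}<\infty.
\]
In particular, $\GDFI\lambda(x)=\int_D\GDFI(x,y)f(y)\,dy=\GDFI f(x)$ is well defined for a.e.\ $x\in D$, and $|\GDFI f|\le \GDFI|\lambda|$.

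Applying the lemma to the non-negative measure $|\lambda|$ would give $\GDFI|\lambda|\in\LLL$ together with a constant $C=C(d,D,\phi)$ such that
\[
\|\GDFI f\|_{\LLL}\le \|\GDFI|\lambda|\|_{\LLL}\le C\int_D \de(y)|f(y)|\,dy=C\|f\|_{\LLL},
\]
which is exactly the claim.

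If one prefers a direct self-contained argument not passing through signed measures, the same bound follows from a one-line Fubini computation: since $\GDFI$ is non-negative and symmetric,
\[
\int_D \de(x)|\GDFI f(x)|\,dx\le \int_D |f(y)|\int_D \GDFI(y,x)\de(x)\,dx\,dy=\int_D |f(y)|\,\GDFI\de(y)\,dy,
\]
and then \eqref{eq:rate of GDdelta} from Lemma \ref{l:rate of GDdelta} replaces $\GDFI\de(y)$ by $C\de(y)$. There is no real obstacle here; the only point requiring attention is that $\GDFI f(x)$ is a.e.\ finite, which is part of Lemma \ref{l:rate of GDdelta} applied to $|\lambda|$.
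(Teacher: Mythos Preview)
Your proposal is correct and matches the paper's approach: the corollary is stated without proof precisely because it is an immediate specialization of Lemma~\ref{l:rate of GDdelta} to the measure $\lambda(dy)=f(y)\,dy$, exactly as you describe. The alternative Fubini argument you give is also fine and is in fact the content of the proof of the second part of Lemma~\ref{l:rate of GDdelta}.
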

	
	\begin{rem}\label{r:GDFI in Linfty}
		Let us note  that by using \eqref{eq:Green function sharp estimate} it easily follows that $\GDFI f\in L^\infty(D)$ for $f\in L^\infty(D)$. 
	\end{rem}
	
	\subsubsection{Operator $\Lo$ revisited}
	In the next lemma we prove the boundary estimate of $\Lo\psi$ for $\psi\in C_c^\infty(D)$ which will allow us to define the operator $\Lo$ in the distributional sense.
	\begin{lem}\label{l: Lo psi < C delta}
		For $\psi\in C_c^\infty(D)$ there is $C_1=C_1(d,D,\phi,\psi)>0$ such that
		\begin{align}\label{l: Lo psi < C delta 1}
			|\Lo\psi(x)|\le C_1\de(x),\quad x\in D.
		\end{align}
		In addition, if $\psi\ge 0$, $\psi\not\equiv 0$, then  there is $C_2=C_2(d,D,\phi,\psi)>0$ such that
		\begin{align}\label{l: Lo psi < C delta 2}
			\Lo \psi(x)\le - C_2\de(x),\quad x\in D\setminus \supp \psi.
		\end{align}
	\end{lem}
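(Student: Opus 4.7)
The plan is to establish the two bounds by complementary methods: \eqref{l: Lo psi < C delta 1} through the spectral series of $\Lo\psi$, and \eqref{l: Lo psi < C delta 2} through the pointwise formula of Lemma \ref{l:Lo pointwisely}.

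For \eqref{l: Lo psi < C delta 1}, I would write $\Lo\psi = \sum_{j=1}^\infty \phi(\lambda_j)\wh\psi_j\varphi_j$ and combine four ingredients: the boundary-decay bound $|\varphi_j(x)| \le (C\lambda_j)^k\,\de(x)$ from \eqref{eq:eigenfunctions bounds}; the rapid Fourier decay $|\wh\psi_j| \le C(m,\psi)\lambda_j^{-m}$ for every $m \in \N$ from \eqref{eq:Ccinfty coef bound}; the linear upper bound $\phi(\lambda_j) \lesssim \lambda_j$ encoded in \eqref{eq:simple global scaling}; and Weyl's law \eqref{eq:Weyl's law}. Together these yield
\begin{align*}
|\Lo\psi(x)| \le \de(x)\sum_{j=1}^\infty \phi(\lambda_j)|\wh\psi_j|(C\lambda_j)^k \lesssim \de(x)\sum_{j=1}^\infty j^{2(k+1-m)/d},
\end{align*}
which is finite once $m > k + 1 + d/2$. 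Moreover the series for $\Lo\psi$ converges absolutely and uniformly in $x$, so the pointwise identity holds everywhere on $D$, giving \eqref{l: Lo psi < C delta 1} with $C_1$ depending on $d,D,\phi,\psi$.

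For \eqref{l: Lo psi < C delta 2}, fix $x \in D\setminus\supp\psi$. Since $\psi \in C_c^\infty(D) \subset C^{1,1}(D)\cap \DD(\left.\Delta\right|_D)$ by Remark \ref{r:infin. genera of X} and $\psi(x)=0$, Lemma \ref{l:Lo pointwisely} gives
\begin{align*}
\Lo\psi(x) = -\int_{\supp\psi} \psi(y)J_D(x,y)\,dy.
\end{align*}
Set $r := \dist(\supp\psi,\partial D) > 0$. For $y \in \supp\psi$ one has $\de(y) \ge r$ and $|x-y| \le \diam D$, so by monotonicity of $t\mapsto \phi(t^{-2})/t^d$ the factor $\phi(|x-y|^{-2})/|x-y|^d$ in the lower bound of \eqref{eq:jumping kernel sharp estimate} is bounded below by a positive constant depending only on $\phi$ and $\diam D$. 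If $\de(x)\de(y) \le |x-y|^2$, then the min in \eqref{eq:jumping kernel sharp estimate} equals $\de(x)\de(y)/|x-y|^2 \ge \de(x)r/(\diam D)^2$; otherwise the min equals $1 \ge \de(x)/\diam D$. Either way $J_D(x,y) \ge c\,\de(x)$ with $c = c(d,D,\phi,\psi) > 0$, and integrating against $\psi$ yields $-\Lo\psi(x) \ge C_2\,\de(x)$ with $C_2 = c\int_D\psi\,dy > 0$.

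The only subtlety is producing the $\de(x)$ factor from both regimes of the min in the jump-kernel bound; beyond that the estimates are essentially organizational. The spectral argument for the upper bound hinges on the fact that smoothness of $\psi$ allows arbitrary polynomial decay of $\wh\psi_j$, which beats the polynomial eigenfunction bound and the sub-linear growth of $\phi$.
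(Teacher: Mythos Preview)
Your proof is correct and follows essentially the same approach as the paper. For \eqref{l: Lo psi < C delta 1} you use exactly the spectral-series argument the paper uses, combining \eqref{eq:eigenfunctions bounds}, \eqref{eq:Ccinfty coef bound}, \eqref{eq:simple global scaling}, and \eqref{eq:Weyl's law}; for \eqref{l: Lo psi < C delta 2} you use the pointwise formula and the lower bound in \eqref{eq:jumping kernel sharp estimate}, the only cosmetic difference being that the paper restricts to a small ball inside $\supp\psi$ on which $\psi$ is bounded below, whereas you bound $J_D(x,y)\ge c\,\de(x)$ uniformly over $\supp\psi$ and then use $\int_D\psi>0$.
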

	\begin{proof}
		Let $\psi\in C_c^\infty(D)$  and note that $\phi(\lambda)\le (1\wedge \lambda)$ by \eqref{eq:simple global scaling}. Thus, from \eqref{eq:Weyl's law}, \eqref{eq:eigenfunctions bounds}, and  \eqref{eq:Ccinfty coef bound} for large enough $m\in \N$, we have
		\begin{align*}
			\frac{|\Lo\psi(x)|}{\de(x)}\le \sum_{j=1}^\infty |\wh\psi_j|\phi(\lambda_j)\left\|\frac{\varphi_j}{\de}\right\|_{L^\infty(D)}\le C_1(d,D,\phi,\psi).
		\end{align*}
		
		For the other bound let $x^*=\arg\max_{x\in D}\psi(x)$, and let $r>0$ such that $B(x^*,2r)\subset \supp\psi$ and $\psi\ge c>0$ on $B(x^*,2r)$. For $x\in D\setminus \supp\psi$, by using the representation \eqref{eq:Lo pointwisely} and the bound \eqref{eq:jumping kernel sharp estimate}, we have
		\begin{align}\label{eq:C_c inequality}
			\Lo\psi(x)=-\int_{\supp\psi}\psi(y)J_D(x,y)dy\le -\int_{B(x^*,r)} c_1\de(x)dy\le -C_2\,\de(x),
		\end{align}
		where $C_2=C_2(d,D,\psi,\phi)>0$. 
	\end{proof}
	
	\begin{defn}\label{d:operator distributional}
		For $f\in \LLL$ we define the distribution $\wLo f$ in $D$ by
		\begin{align*}
			\langle \wLo f,\psi\rangle\coloneqq \langle f,\Lo\psi\rangle\coloneqq\int_D f(x)\Lo \psi (x)dx,\quad \psi\in C_c^\infty(D).
		\end{align*}
	\end{defn}
	
	\begin{rem}\label{r:Lo weak and point}
		Sometimes for $\wLo f$ we say $\Lo f$ in the distributional sense. Notice that Lemma \ref{l: Lo psi < C delta} implies that the integral defining $\wLo f$ is well defined. 
		
		By following the calculations from \cite[Section 3]{bogdan1999potential}, we get that for $f\in  C^{1,1}(D)\cap\LLL$ we have $\wLo f=\pLo f$. 
	\end{rem}

	The next proposition says that the relation from Remark \ref{r:infin. genera of X} can be also extended to $\wLo$.
	
	\begin{prop}\label{p:wLo GDF=f no.2} 
		Let $\mu\in\MM(D)$ such that $\int_D\de(x)|\mu|(dx)<\infty$. Then $\wLo\GDFI\mu=\mu$.
	\end{prop}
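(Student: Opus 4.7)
The plan is to unfold the distributional definition and reduce the identity to the reproducing property $\GDFI\Lo\psi = \psi$, which Proposition \ref{p:LGDf=-f} gives almost everywhere; the subtle point is promoting this to a pointwise statement so that it survives integration against a possibly singular $\mu$. Fix $\psi\in C_c^\infty(D)$. By Definition \ref{d:operator distributional},
\[
\langle \wLo\GDFI\mu,\psi\rangle=\int_D \GDFI\mu(x)\,\Lo\psi(x)\,dx,
\]
and the goal is to show this equals $\int_D\psi(y)\,\mu(dy)$.

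First I would apply Fubini's theorem to the double integral $\int_D\!\int_D \GDFI(x,y)\,\Lo\psi(x)\,dx\,\mu(dy)$. Absolute convergence follows from Lemma \ref{l: Lo psi < C delta}, which gives $|\Lo\psi(x)|\le C_1\de(x)$, combined with the symmetry of $\GDFI$ and Lemma \ref{l:rate of GDdelta}, which yields $\int_D \GDFI(x,y)\de(x)\,dx = \GDFI\de(y)\le C\,\de(y)$; the assumption $\int_D\de\,d|\mu|<\infty$ then closes the estimate. After swapping, the inner integral is $\int_D \GDFI(y,x)\Lo\psi(x)\,dx = \GDFI(\Lo\psi)(y)$ by symmetry. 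Since $\psi\in C_c^\infty(D)\subset\DD(-\Lo) = H_\phi(D)$ by Remark \ref{r:infin. genera of X}, we have $\Lo\psi\in L^2(D)$, so Proposition \ref{p:LGDf=-f} applied to $f=\Lo\psi$ yields $\GDFI(\Lo\psi)(y)=\LoI\Lo\psi(y)=\psi(y)$ for Lebesgue-a.e.\ $y\in D$.

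The main obstacle is that this a.e.\ equality is insufficient when $\mu$ has singular parts such as Dirac masses, so I would establish that the pointwise function $y\mapsto \int_D \GDFI(x,y)\Lo\psi(x)\,dx$ is continuous on $D$. Indeed, $\Lo\psi$ is bounded on $D$ (dominated by $C_1\de\le C_1\diam D$ via Lemma \ref{l: Lo psi < C delta}), and the Green potential of a bounded function is continuous: by Remark \ref{r:GDFI in Linfty} the potential is in $L^\infty$, and the representation $\GDFI f=\int_0^\infty Q_t^D f\,dt$ combined with the strong Feller property of $(Q_t^D)_t$ and the exponential decay of $\|Q_t^D f\|_\infty$ for large $t$ (coming from the spectral gap $\phi(\lambda_1)>0$) yields continuity via dominated convergence. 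Since $\psi$ is also continuous and the two functions agree Lebesgue-a.e., they coincide everywhere in $D$, hence in particular $|\mu|$-a.e. Integrating against $\mu$ gives $\int_D \GDFI(\Lo\psi)(y)\,\mu(dy)=\int_D \psi(y)\,\mu(dy)$, completing the proof.
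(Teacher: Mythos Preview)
Your proof is correct and follows the same strategy as the paper: unfold the distributional pairing, apply Fubini (justified via Lemma \ref{l: Lo psi < C delta} and Lemma \ref{l:rate of GDdelta}), and invoke $\GDFI(\Lo\psi)=\psi$ from Proposition \ref{p:LGDf=-f}. You add a continuity argument to upgrade the Lebesgue-a.e.\ equality to a pointwise one before integrating against $\mu$, a step the paper leaves implicit; this is a legitimate concern for singular $\mu$, and your resolution via the strong Feller property and spectral gap is valid (the paper's later Proposition \ref{p:continuity of GDf}, applied to the bounded function $\Lo\psi$, would serve equally well).
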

	\begin{proof}
		Let $\psi\in C_c^\infty(D)$ and recall that $\Lo\psi\in L^2(D)$ which follows by taking $m\in \N$ large enough in \eqref{eq:Ccinfty coef bound}. Hence, by Proposition \ref{p:LGDf=-f} we have a.e. in $D$
		\begin{align*}
			\psi=\LoI(\Lo\psi)=\GDFI(\Lo\psi).
		\end{align*}
		Thus, by using Lemma \ref{l:rate of GDdelta} and Lemma \ref{l: Lo psi < C delta}, Fubini's theorem gives us
		\begin{align*}
			\langle \wLo \GDFI\mu,\psi\rangle&= \langle \GDFI\mu,\Lo\psi\rangle\\
			&=\int_D\left(\int_D\GDFI(x,y)\mu(dy)\right)\Lo\psi(x)dx\\&=\int_D\left(\int_D\GDFI(x,y)\Lo\psi(x)dx\right)\mu(dy)=\int_D\psi(y)\mu(dy).
		\end{align*}
	\end{proof}
	
	The following proposition connects the spectral, the distributional,  and the pointwise definition of $\Lo$ for nice enough functions.
	\begin{prop}\label{p:Lo various ways}
		If $u \in C^{1,1}(D)\cap H_\phi(D)$, then
		\begin{align*}
			\Lo u=\wLo u=\pLo u
		\end{align*}
		holds a.e. in $D$.
	\end{prop}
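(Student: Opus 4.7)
The plan is to establish the two identifications $\wLo u=\pLo u$ and $\Lo u=\wLo u$ separately, the former being essentially immediate from what has been proved and the latter reducing to a symmetry argument in the eigenbasis.

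The equality $\wLo u=\pLo u$ is essentially free. Since $D$ is bounded, the function $\de$ is bounded on $D$, hence $H_\phi(D)\subset L^2(D)\subset L^1(D)\subset \LLL$. Therefore $u\in C^{1,1}(D)\cap \LLL$, and Remark~\ref{r:Lo weak and point} applies directly to give $\wLo u=\pLo u$ a.e.\ in $D$.

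For the main identity $\Lo u=\wLo u$, I note first that $\Lo u\in L^2(D)$ is a genuine function, so what must be checked is that it agrees with the distribution $\wLo u$ of Definition~\ref{d:operator distributional}. I would fix an arbitrary $\psi\in C_c^\infty(D)$, expand $u=\sum_j\widehat u_j\varphi_j$ and $\psi=\sum_j\widehat\psi_j\varphi_j$ in the Hilbert basis, and compute by Parseval
\[
\int_D u(x)\,\Lo\psi(x)\,dx=\sum_{j=1}^\infty \phi(\lambda_j)\,\widehat u_j\,\widehat\psi_j=\int_D \Lo u(x)\,\psi(x)\,dx.
\]
By Definition~\ref{d:operator distributional} this reads $\langle\wLo u,\psi\rangle=\langle\Lo u,\psi\rangle_{L^2(D)}$ for every $\psi\in C_c^\infty(D)$, forcing $\wLo u=\Lo u$ as distributions and therefore a.e.\ in $D$.

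The one technical point that requires care is the absolute convergence needed to legitimate the term-wise rearrangement. Both factors must sit in $H_\phi(D)$: for $u$ this is the standing hypothesis, while for $\psi\in C_c^\infty(D)$ it was already verified in the proof of Proposition~\ref{p:LGDf=-f} via the coefficient decay \eqref{eq:Ccinfty coef bound} combined with Weyl's asymptotics \eqref{eq:Weyl's law}. Cauchy--Schwarz then yields
$\sum_j\bigl|\phi(\lambda_j)\widehat u_j\widehat\psi_j\bigr|\le \|u\|_{H_D(\phi)}\|\psi\|_{L^2(D)}<\infty$,
so both series appearing above are absolutely convergent and Fubini-type reordering is legal. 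There is no real obstacle here; the proposition is a clean consequence of the preparatory material, essentially encoding the self-adjointness of the spectral operator $\Lo$ together with the pointwise result of Lemma~\ref{l:Lo pointwisely} extended through Remark~\ref{r:Lo weak and point}.
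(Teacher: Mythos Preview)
Your proof is correct. The first identification $\wLo u=\pLo u$ is handled exactly as in the paper, via Remark~\ref{r:Lo weak and point}.

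For the second identification $\Lo u=\wLo u$, you and the paper diverge. The paper invokes the characterization $H_\phi(D)=\GDFI\big(L^2(D)\big)$ from Remark~\ref{r:infin. genera of X}: writing $u=\GDFI h$ with $h\in L^2(D)$, one has $\Lo u=h$ by definition and $\wLo u=\wLo\GDFI h=h$ by Proposition~\ref{p:wLo GDF=f no.2}, so $\Lo u=\wLo u$. Your argument instead bypasses the Green potential machinery entirely and exploits the symmetry of $\Lo$ directly via Parseval in the eigenbasis: since both $u$ and $\psi$ lie in $H_\phi(D)$, the pairing $\langle u,\Lo\psi\rangle_{L^2}=\sum_j\phi(\lambda_j)\widehat u_j\widehat\psi_j=\langle\Lo u,\psi\rangle_{L^2}$ is immediate. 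Your route is slightly more elementary in that it does not call on Proposition~\ref{p:wLo GDF=f no.2} or the surjectivity of $\GDFI$ onto $H_\phi(D)$; the paper's route, on the other hand, makes transparent the role of the Green operator as the inverse of $\Lo$ and fits more naturally into the potential-theoretic narrative of the article. Both are short and clean.
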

	\begin{proof}
		Let $u \in C^{1,1}(D)\cap H_\phi(D)$. Recall that $H_\phi(D)=\GDFI\big(L^2(D)\big)\subset L^2(D)\subset \LLL$, so $u=\GDFI h$ for some $h\in L^2(D)$, and $\Lo u=h$. However, $u\in C^{1,1}(D)$ so $\wLo u=\pLo u$ by Remark \ref{r:Lo weak and point}, and  $\wLo u=h$ by Proposition \ref{p:wLo GDF=f no.2}.
	\end{proof}
	
	\subsubsection{Regularity of Green potentials}
	In  the  two following claims we deal with  the  regularity properties of $\GDFI f$. The first claim says that Green potentials are continuous and this fact is rather simple to see and prove. We also prove that the Green potential of a $C_c^\infty(D)$ function is a $C^{1,1}(\overline{D})$ function, i.e. we prove a smoothness result for a specific class of functions.
	
	\begin{prop}\label{p:continuity of GDf}
		If $f\in\LLL \cap L^\infty_{loc}(D)$, then $\GDFI f\in C(D)$.
	\end{prop}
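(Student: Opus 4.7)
The plan is to fix $x_0\in D$, choose $r=\de(x_0)/4$, and for a sequence $x_n\to x_0$ in $B(x_0,r)$ split the Green potential as
$$\GDFI f(x_n)=\int_{B(x_0,2r)}\GDFI(x_n,y)f(y)\,dy+\int_{D\setminus B(x_0,2r)}\GDFI(x_n,y)f(y)\,dy.$$
Both pieces will be handled by dominated convergence together with the joint continuity of $\GDFI$ off the diagonal in $D\times D$; what requires attention is finding a uniform (in $x_n$) integrable majorant in each region, for which the sharp estimate \eqref{eq:Green function sharp estimate} and the lower scaling from \ref{A1} are the key ingredients.

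For the \emph{far piece}, when $x\in B(x_0,r)$ and $y\in D\setminus B(x_0,2r)$ we have $|x-y|\ge r$, $\de(x)\le\diam D$, and $\phi(|x-y|^{-2})\ge \phi((\diam D)^{-2})$ by monotonicity of $\phi$. From \eqref{eq:Green function sharp estimate} we then obtain
$$\GDFI(x,y)\le C\,\frac{\de(x)\de(y)}{|x-y|^{d+2}\phi(|x-y|^{-2})}\le C(r,D,\phi)\,\de(y),$$
so $\GDFI(x_n,y)|f(y)|\le C(r,D,\phi)\,\de(y)|f(y)|$ is integrable and independent of $n$. Continuity of $\GDFI(\cdot,y)$ at $x_0$ (in the extended sense, but here $x_0\neq y$ so it is the usual continuity) combined with dominated convergence yields convergence of this piece to its value at $x_0$.

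For the \emph{near piece}, local boundedness gives $|f|\le M$ on $B(x_0,2r)$. The sharp estimate produces
$$\GDFI(x,y)\le \frac{C}{|x-y|^{d}\phi(|x-y|^{-2})},$$
and the weak scaling at infinity in \eqref{eq:scaling condition} with $t=1$ and $\lambda=|x-y|^{-2}\ge 1$ (i.e., $|x-y|\le 1$, which we can assume by shrinking $r$) yields $\phi(|x-y|^{-2})\ge a_1\phi(1)\,|x-y|^{-2\delta_1}$, hence $\GDFI(x,y)\lesssim |x-y|^{2\delta_1-d}$. Since $\delta_1>0$, this singularity is integrable on balls, and moreover
$$\sup_{x\in B(x_0,r)}\int_{B(x_0,\eta)}\GDFI(x,y)\,dy\xrightarrow[\eta\to 0]{}0$$
by a change of variables. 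A standard $\varepsilon/3$ argument finishes: given $\varepsilon>0$, shrink $\eta$ so that the integral over $B(x_0,\eta)$ contributes at most $\varepsilon/3$ uniformly for all $x_n$ close to $x_0$, and on the annulus $B(x_0,2r)\setminus B(x_0,\eta)$ apply joint continuity of $\GDFI$ together with dominated convergence against the bounded kernel on that compact-in-$(x,y)$ set.

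The only real obstacle is ensuring the near-diagonal singular majorant is both locally integrable and controlled uniformly in $x$ near $x_0$; this is exactly what the lower index $\delta_1>0$ in the scaling assumption \ref{A1} provides, and once this uniform integrability is in hand, continuity is routine.
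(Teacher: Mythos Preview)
Your proof is correct and follows essentially the same strategy as the paper: split into a far piece (dominated convergence using $\GDFI(x,y)\lesssim\de(y)$ and $f\in\LLL$) and a near piece (uniform smallness of the Green integral on small balls using local boundedness of $f$). The only cosmetic difference is in the near-diagonal estimate: you use the lower scaling index to bound $\GDFI(x,y)\lesssim|x-y|^{2\delta_1-d}$ and obtain $\int_{B(x_0,\eta)}\GDFI(x,y)\,dy\lesssim\eta^{2\delta_1}$, whereas the paper computes this integral more precisely via the relation $\phi'(\lambda)\asymp\phi(\lambda)/\lambda$ to get $\int_0^\eta\frac{dr}{r\phi(r^{-2})}\asymp 1/\phi(\eta^{-2})$, but both bounds serve the same purpose.
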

	\begin{proof}
		Let $x\in D$, $\eta\in (0,\de(x)/2)$ and $(x_n)_n\subset D$ such that $x_n\to x$ and $|x_n-x|<\eta/2$, $n\in\N$. We have
		\begin{align}
			|\GDFI f(x_n)-\GDFI f(x)|&\le\int_{D}|\GDFI(x_n,y)-\GDFI(x,y)||f(y)|dy\nonumber\\
			&\le\int_{D\cap B(x,\eta)^c}|\GDFI(x_n,y)-\GDFI(x,y)||f(y)|dy\label{eq:c Gdf 1}\\
			&\qquad\qquad+\int_{B(x,\eta)}\GDFI(x_n,y)|f(y)|dy\label{eq:c Gdf 2}\\
			&\qquad\qquad\qquad+\int_{B(x,\eta)}\GDFI(x,y)|f(y)|dy.\label{eq:c Gdf 3}
		\end{align}
		The first integral \eqref{eq:c Gdf 1} goes to 0 as $n\to\infty$ by the dominated convergence theorem   since $\GDFI$ is continuous, $f\in \LLL$, and since the bound \eqref{eq:Green function sharp estimate} holds.
		
		For the integrals \eqref{eq:c Gdf 2} and \eqref{eq:c Gdf 3}  note that $M\coloneqq \sup_{y\in B(x,\de(x)/2)}|f(y)|<\infty$ since $f\in L^\infty_{loc}(D)$. Further, by \eqref{eq:Green function sharp estimate} for all $w\in B(x,\eta/2)$ we have
		\begin{align}\label{eq:cont GDf eq1}
			\int_{B(x,\eta)}\GDFI(w,y)|f(y)|dy&\le c_1M\int_{B(w,\frac32 \eta)}\frac{1}{|w-y|^{d}\phi(|w-y|^{-2})}dy\notag\\
			&\le c_2 M \int_0^{\frac32 \eta}\frac{dr}{r\phi(r^{-2})}\le c_3M\int_0^{\frac32 \eta}\frac{\phi'(r^{-2})}{r^3\phi(r^{-2})^2}=\frac{c_3 M}{\phi(\frac{4}{9\eta^2})},
		\end{align}
		where in the last equality we used the substitution $t=\phi(r^{-2})$ and $c_3=c_3(d,D,\phi)>0$.
		Thus, the second and the third integral can be made arbitrarily small.
	\end{proof}
	\begin{rem}\label{r:equicont GDf}
		From Proposition \ref{p:continuity of GDf} it follows that
		\begin{align}\label{eq:uniform of GDf}
			\lim_{\xi \to x}\int_D|\GDFI(\xi,y)-\GDFI(x,y)||f(y)|dy=0,
		\end{align}
		uniformly on compact subsets of $D$.
		
		Indeed, fix a compact set $K\subset D$ and $\varepsilon>0$. First choose $\eta>0$ from Proposition \ref{p:continuity of GDf} such that $\dist(K,\partial D)>2\eta$ and ${(c_3M)}/\phi(\frac{4}{9\eta^2})<\varepsilon/3$, where $M=\sup_{y\in K+B(0,\eta)}|f(y)|$, see \eqref{eq:cont GDf eq1}. Thus, we tamed the integrals \eqref{eq:c Gdf 2} and \eqref{eq:c Gdf 3}.
		For the integral \eqref{eq:c Gdf 1} notice that the convergence $\lim_{\xi\to x}\GDFI(\xi,y)=\GDFI(x,y)$ is uniform in $x\in K$ and $y\in D\cap B(x,\eta)^c$ since $\GDFI$ is jointly continuous and since $\GDFI$ continuously vanishes at the boundary by \eqref{eq:Green function sharp estimate}. Hence, \eqref{eq:uniform of GDf} holds uniformly on compact sets.
	\end{rem}
	
	\begin{prop}\label{p:GDftwice diff}
		If $f\in C_c^\infty(D)$, then  $\GDFI f\in C^{1,1}(\overline D)$.
	\end{prop}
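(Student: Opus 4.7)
The plan is to reduce the statement to a classical elliptic regularity result for the Dirichlet Laplacian on a $C^{1,1}$ domain, via the operator factorization $\GDFI = \GD \circ \Loz$ implicit in Lemma \ref{l: GDf(GDf*)=GD}. Concretely, I would first establish the identity $\GDFI f = \GD(\Loz f)$ for $f \in C_c^\infty(D)$, then verify that $g := \Loz f$ is H\"older continuous on $\overline D$, and finally invoke classical regularity to conclude $\GD g \in C^{1,1}(\overline D)$.

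For the factorization, applying Fubini's theorem to Lemma \ref{l: GDf(GDf*)=GD} yields $\GD h = \GDFI(\GDfz h)$ for every $h \in L^\infty(D)$. I would plug in $h = \Loz f$, which lies in $L^\infty(D)$ by Lemma \ref{l: Lo psi < C delta} (in fact $|h| \le C\de$), and use the counterpart of Proposition \ref{p:LGDf=-f} for $\phi^*$ to obtain $\GDfz(\Loz f) = f$ a.e. This last identity is valid because $f \in C_c^\infty(D) \subset C^{1,1}(\overline D)$ lies in the domain of $\Loz$ by Remark \ref{r:infin. genera of X}. Combining these two facts gives $\GDFI f = \GD(\Loz f)$.

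The next step is to upgrade the bound $|g|\le C\de$ on $g = \Loz f$ to H\"older continuity on $\overline D$. In the interior of $D$ this is routine, coming from the pointwise formula in Lemma \ref{l:Lo pointwisely} together with a Taylor expansion of $f$ at the base point (to tame the P.V.\ integral near the diagonal) and the sharp bound \eqref{eq:jumping kernel sharp estimate} (for the far-field part). Near $\partial D$ one has $f\equiv 0$ in a neighbourhood of $x$, so $g(x) = -\int_D J_D^*(x,y)f(y)\,dy$, and the regularity of $g$ in $x$ reduces, via the representation $J_D^*(x,y) = \int_0^\infty p_D(t,x,y)\mu^*(t)\,dt$, to pointwise $C^1$-regularity of the heat kernel up to $\partial D$, which is provided by Lemma \ref{apx:l:regularity of heat kernel}, combined with the bound \eqref{eq:Levy density upper bound} to guarantee integrability in $t$.

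Once $g\in C^{0,\alpha}(\overline D)$ is in hand, $u := \GD g = \GDFI f$ is the weak solution of $-\Delta u = g$ in $D$ with zero Dirichlet boundary data, and classical elliptic regularity for the Laplacian in $C^{1,1}$ bounded domains then delivers $u \in C^{1,1}(\overline D)$. The main obstacle is exactly the H\"older estimate for $g$ up to $\partial D$: while the sup-norm control $|g|\le C\de$ is essentially immediate, producing a uniform bound on $\nabla_x g$ near $\partial D$ requires differentiating the jump-kernel representation in $x$ with uniform control as $x\to\partial D$, and this is the point at which the up-to-the-boundary heat-kernel regularity from Lemma \ref{apx:l:regularity of heat kernel} is essential.
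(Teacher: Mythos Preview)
Your route is genuinely different from the paper's. The paper argues directly via the spectral expansion: writing $\GDFI f = \sum_j \phi(\lambda_j)^{-1}\hat f_j \varphi_j$ (Proposition~\ref{p:LGDf=-f}), it combines the fast decay $|\hat f_j|\lesssim\lambda_j^{-m}$ for arbitrary $m$ (from $f\in C_c^\infty(D)$, see \eqref{eq:Ccinfty coef bound}) with the eigenfunction bound $\|\varphi_j\|_{C^{1,1}(\overline D)}\lesssim (1+\lambda_j)^{d/4+1}$ (from \eqref{eq:eigen estimates} and \eqref{eq:eigen estimate 1}) and Weyl's law \eqref{eq:Weyl's law} to see that the series converges absolutely in $C^{1,1}(\overline D)$. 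No factorization and no analysis of $\Loz f$ is needed; the proof is essentially three lines once the Appendix estimates are in place.

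Your strategy via $\GDFI f=\GD(\Loz f)$ is structurally appealing and the factorization step is correct, but it shifts the entire burden to two places that are not mere invocations. First, the H\"older regularity of $g=\Loz f$ up to $\partial D$: you correctly identify the heat-kernel regularity as the relevant input, but obtaining a \emph{uniform} gradient bound on $x\mapsto\int_D J_D^*(x,y)f(y)\,dy$ as $x\to\partial D$ requires a quantitative derivative estimate for $p_D$ that is integrable in $t$ against $\nu(t)\,dt$; Lemma~\ref{apx:l:regularity of heat kernel} gives continuity of $\nabla_x p_D$ but not the needed decay, so this step needs additional work (e.g.\ a bound of the type \eqref{eq:eq2 Poisson estimate}). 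Second, and more delicate, the final step: from $g\in C^{0,\alpha}(\overline D)$ on a merely $C^{1,1}$ domain, standard boundary Schauder theory (which requires $\partial D\in C^{2,\alpha}$ for second-order control) does \emph{not} directly yield $u=\GD g\in C^{1,1}(\overline D)$; the Calder\'on--Zygmund $L^p$ theory on $C^{1,1}$ domains gives only $u\in C^{1,\beta}(\overline D)$ for every $\beta<1$. The paper sidesteps this by using \cite[Theorem~8.33]{gilbarg_pde} at the level of each eigenfunction and summing. For your approach to close, you would need the analogous global $C^{1,1}(\overline D)$ statement for a general right-hand side on a $C^{1,1}$ domain, and a precise reference at that borderline exponent should be given. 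In short, your plan can be made to work but the two boundary-regularity steps are genuine lemmas, whereas the paper's spectral argument is a short application of estimates already established in the article.
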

	\begin{proof}
		By Proposition \ref{p:LGDf=-f} we have $\GDFI f=\sum_{j=1}^\infty \frac1{\phi(\lambda_j)}\wh f_j \varphi_j$ a.e. in $D$. However, $\GDFI f\in C(D)$ by Proposition \ref{p:continuity of GDf}. Also, recall that there is $c_1=c_1(m,f)>0$ such that $|\wh f_j|\le c_1\lambda_j^{-m}$, $j\in\N$, by \eqref{eq:Ccinfty coef bound}, hence in the light of \eqref{eq:eigen estimates} and \eqref{eq:eigen estimate 1}, for large enough $m\in \N$ we have
		\begin{align*}
			\left\|\sum_{j=1}^\infty \frac1{\phi(\lambda_j)}\wh f_j \varphi_j\right\|_{C^{1,1}(\overline D)}\le \sum_{j=1}^\infty\frac{c_2}{\phi(\lambda_j)\lambda_j^m}(1+\lambda_j)^{d/4+1}<\infty
		\end{align*}
		by \eqref{eq:Weyl's law} and by \eqref{eq:simple global scaling}, where $c_2=c_2(d,D,m,f)>0$.
		
		In other words,  $\GDFI f=\sum_{j=1}^\infty \frac1{\phi(\lambda_j)}\wh f_j \varphi_j$ everywhere in $D$ and $\GDFI f\in C^{1,1}(\overline D)$.
	\end{proof}
	
	\subsection{Poisson kernel and harmonic functions}
	Recall that the Poisson kernel of the Brownian motion (i.e. of the Dirichlet Laplacian) can be defined as 
	\begin{align}\label{eq:Poisson Dirichlet Laplacian}
		P_{D}(x,z)=-\frac{\partial}{\partial \mathbf{n}}\GD(x,z),\quad x\in D,z\in\partial D,
	\end{align}
	 since we assume that $D$ is a $C^{1,1}$ bounded domain, 
	see \cite[Section 2.2.4]{evans_pde}. Here $\frac{\partial}{\partial \mathbf{n}}$ denotes the derivate in the direction of the  outer  normal. In this subsection we study the Poisson kernel of the process $X$ which we define as the normal derivative of the Green kernel of the process $X$ and we study harmonic functions relative to $\Lo$, or, as we show at the end of the subsection, relative to $X$.
	
	\begin{prop}\label{p:Poisson kernel definition}
		The function
		\begin{align}\label{eq:Poisson kernel definition}
			\PDfi(x,z)\coloneqq -\frac{\partial}{\partial {\mathbf{n}}}\GDfi(x,z),\quad x\in D,z\in\partial D,
		\end{align}
		is well defined and $(x,z)\mapsto\PDfi(x,z)\in C(D\times \partial D)$. Moreover,
		\begin{align}\label{eq:Poisson sharp bounds}
			\PDfi(x,z)\asymp \frac{\de(x)}{|x-z|^{d+2}\phi(|x-z|^{-2})},\quad x\in D,z\in\partial D,
		\end{align}
		where the constant of comparability depends only on $d$, $D$ and $\phi$. Finally, it holds that
		\begin{align}\label{eq:Green Poisson identity}
			\int_D \GDfz(x,\xi)\PDfi(\xi,z)d\xi=P_{D}(x,z),\quad x\in D,z\in\partial D.
		\end{align}	
	\end{prop}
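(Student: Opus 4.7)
The plan is to differentiate the heat kernel representation
\[
\GDfi(x,y)=\int_0^\infty p_D(t,x,y)\,\uu(t)\,dt
\]
in $y$ along the inward normal at $z\in\partial D$ by passing the derivative inside the time integral. Since $p_D\in C^1((0,\infty)\times\overline D\times\overline D)$ by Lemma \ref{apx:l:regularity of heat kernel} and $p_D(t,x,z)=0$ for $z\in\partial D$, the ratio $p_D(t,x,z-s\mathbf{n})/s$ converges as $s\downarrow 0$ to $-\frac{\partial p_D}{\partial \mathbf{n}}(t,x,z)$ for each fixed $t>0$. To bring the derivative inside the $t$-integral I need a $\uu$-integrable pointwise majorant. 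Using the upper half of \eqref{eq:heat kernel estimate}, the fact that $\de(z-s\mathbf{n})\asymp s$ in a $C^{1,1}$ domain, and $|x-(z-s\mathbf{n})|\to|x-z|$ as $s\downarrow 0$, this ratio is bounded for all sufficiently small $s$ by
\[
\frac{c_1\,\de(x)}{t^{(d+2)/2}}\,e^{-c_2|x-z|^2/t},\quad t>0,
\]
with a matching lower bound on $t\in(0,T_0]$ coming from the lower half of \eqref{eq:heat kernel estimate}. Combined with \eqref{eq:potent density upper bound} this majorant is $\uu$-integrable in $t$, so dominated convergence produces both the existence of $\PDfi(x,z)$ and the integral representation
\[
\PDfi(x,z)=\int_0^\infty\left(-\frac{\partial p_D}{\partial \mathbf{n}}(t,x,z)\right)\uu(t)\,dt.
\]

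The sharp estimate \eqref{eq:Poisson sharp bounds} then follows by plugging the two-sided bounds on $-\partial p_D/\partial\mathbf{n}$ and on $\uu$ into this integral and performing the substitution $s=|x-z|^2/t$, using the weak scaling \eqref{eq:scaling condition} and $\phi'(\lambda)\asymp\phi(\lambda)/\lambda$ from \eqref{eq:scaling and the derivative}. This is precisely the computational pattern behind \eqref{eq:Green function sharp estimate} in Lemma \ref{apx:l:Green fun est}, only with one extra derivative in $y$ folded into the integrand. Joint continuity of $(x,z)\mapsto\PDfi(x,z)$ on $D\times\partial D$ is then a second dominated convergence argument with the same majorant (uniform in $x$ on compact subsets of $D$ and in $z\in\partial D$), combined with the joint continuity of $(t,x,y)\mapsto\frac{\partial p_D}{\partial\mathbf{n}}(t,x,y)$ on $(0,\infty)\times D\times\overline D$ delivered by Lemma \ref{apx:l:regularity of heat kernel}.

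For the factorization \eqref{eq:Green Poisson identity} I would apply $-\partial/\partial \mathbf{n}$ in the $y$-variable, evaluated at $y=z$, to the symmetric reformulation $\int_D\GDfz(x,\xi)\GDfi(\xi,y)\,d\xi=\GD(x,y)$ of Lemma \ref{l: GDf(GDf*)=GD} (valid because $\phi^{**}=\phi$). The right-hand side becomes $P_D(x,z)$ by \eqref{eq:Poisson Dirichlet Laplacian}, and the left-hand side becomes $\int_D\GDfz(x,\xi)\PDfi(\xi,z)\,d\xi$ once the derivative is moved inside the $\xi$-integral; this exchange is justified by combining \eqref{eq:Poisson sharp bounds} with \eqref{eq:Green function sharp estimate} for $\GDfz$ to dominate the difference quotients $\GDfi(\xi,z-s\mathbf{n})/s$ uniformly in $s$. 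The main obstacle is obtaining the two-sided Gaussian-in-$|x-z|^2/t$ bound on $\partial p_D/\partial\mathbf{n}$ from \eqref{eq:heat kernel estimate}: the upper bound requires enough care in the passage $s\downarrow 0$ that the Gaussian exponent is not degraded, while for the lower bound one must check that the minimum in the bracket $[\de(x)\de(y)/t\wedge 1]$ of \eqref{eq:heat kernel estimate} is realized by the first term as $\de(y)\downarrow 0$, which forces $s$ to be small relative to $t/\de(x)$ (harmless since $s\downarrow 0$ for each fixed $t$).
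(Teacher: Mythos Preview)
Your plan for the well-definedness, the integral representation, and the joint continuity of $\PDfi$ is exactly what the paper does; the same majorant $c\,\de(x)t^{-(d+2)/2}e^{-c'\de(x)^2/t}\uu(t)$ is used there, and dominated convergence is applied both for the limit $y\to z$ and for the continuity in $(x,z)$.

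For the sharp bounds \eqref{eq:Poisson sharp bounds} your approach works but is more laborious than needed. The paper observes that, once $\PDfi(x,z)=\lim_{y\to z}\GDfi(x,y)/\de(y)$ is established, the estimate follows immediately from the Green function bound \eqref{eq:Green function sharp estimate}: for $y$ normal to $z$ and close to $z$ the minimum in $(\de(x)\de(y)/|x-y|^2\wedge 1)$ is realized by the first term, and dividing by $\de(y)$ and sending $y\to z$ gives \eqref{eq:Poisson sharp bounds} directly. No separate time-integral computation (and no lower heat-kernel bound, which only holds for $t\le T_0$) is required.

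There is, however, a genuine gap in your justification of the factorization \eqref{eq:Green Poisson identity}. A simple dominated convergence with an $s$-independent majorant does not work, because the difference quotient $\GDfi(\xi,y_s)/\de(y_s)$ (with $y_s=z-s\mathbf{n}$) has a singularity at $\xi=y_s$ that moves with $s$. Near $\xi=y_s$ one only has $\GDfi(\xi,y_s)/\de(y_s)\lesssim \de(\xi)\,|\xi-y_s|^{-d-2}\phi(|\xi-y_s|^{-2})^{-1}$, and no fixed integrable function of $\xi$ dominates this for all small $s$: at $\xi$ with $|\xi-y_s|\ll s$ the quotient blows up while any $s$-independent Poisson-type majorant centered at $z$ stays of order $s^{-d-1}\phi(s^{-2})^{-1}$. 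The paper handles this by a uniform-integrability (Vitali-type) argument rather than dominated convergence: one splits off a ball $B(z,\varepsilon)$, shows that outside this ball the quotient is dominated by $c_3(\varepsilon)\de(\xi)$ so that dominated convergence applies there, and then estimates the contribution of $B(z,\varepsilon)$ directly via two further splittings $B(y_s,\de(y_s)/2)$ and its complement in $B(y_s,2\varepsilon)$, obtaining bounds of order $1/\phi(4/\de(y_s)^2)$ and $1/\phi(1/(4\varepsilon^2))$ respectively. Both can be made arbitrarily small. You should replace your one-line domination by this local analysis.
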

	\begin{proof}
		Let $x\in D$ and $z\in\partial D$. For $y\in D$ we have
		\begin{align*}
			\frac{\GDfi (x,y)}{\de(y)}=\int_0^\infty \frac{1}{\de(y)}p_D(t,x,y)\uu(t)dt.
		\end{align*}
		In what follows, we always consider $y\in D$ which is in the direction of the normal derivative  in  $z$,  close enough to $z$ so that $\de(x)\le 2|x-y|$.
		
		Recall that $p_D\in C^1((0,\infty)\times \overline D\times \overline D)$ since $D$ is $C^{1,1}$, see Lemma \ref{apx:l:regularity of heat kernel}, hence $-\frac{\partial}{\partial \mathbf{n}}p_D(t,x,z)=\lim\limits_{y\to z}\frac{p_D(t,x,y)}{\de(y)}$ exists. Further, from \eqref{eq:heat kernel estimate}, there exist constants $c_1,c_2>0$ (depending on $D$) such that for all $t>0$, and $x,y\in D$ we have 
		\begin{align}\label{eq:eq1 Poisson estimate}
			\frac{p_D(t,x,y)\uu(t)}{\de(y)}\le c_1 \frac{\de(x)}{t^{d/2+1}}e^{-\frac{c_2|x-y|^2}{t}}\uu(t)\le  c_1\frac{\de(x)}{t^{d/2+1}}e^{-\frac{c_2\de(x)^2}{4t}}\uu(t).
		\end{align}
		Recall that $\uu$ is decreasing and that $\int_0^1\uu(t)dt<\infty$, hence the right hand side of \eqref{eq:eq1 Poisson estimate} is in $L^1\big((0,\infty),dt\big)$. By using  the dominated convergence theorem we conclude  that $\PDfi(x,z)$ is well defined and
		\begin{align*}
			\PDfi(x,z)=\lim_{y\to z}\frac{\GDfi (x,y)}{\de(y)}=-\int_0^\infty\frac{\partial}{\partial \mathbf{n}}p_D(t,x,z)\uu(t)dt.
		\end{align*}
		Moreover, \eqref{eq:Poisson sharp bounds} immediately follows from the definition of $\PDfi$ and \eqref{eq:Green function sharp estimate}.
		
		Now we show that $\PDfi$ is jointly continuous on $D\times\partial D$. Let $(x_n)_n\subset D$ such that $x_n\to x\in D$ and such that $\de(x_n)\ge \de(x)/2$. Also, take $(z_n)_{n}\subset \partial D$ such that $z_n\to z\in\partial D$. By taking the limit $y\to z$ in the first inequality in \eqref{eq:eq1 Poisson estimate} without the term $\uu(t)$, we obtain for all $n\in\N$ and all $t\in(0,\infty)$
		\begin{align}\label{eq:eq2 Poisson estimate}
			0\le -\frac{\partial}{\partial \mathbf{n}}p_D(t,x_n,z_n)\le c_1 \frac{\de(x_n)}{t^{d/2+1}}e^{-\frac{c_2|x_n-z_n|^2}{t}}\le c_1\frac{\de(x_n)}{t^{d/2+1}}e^{-\frac{c_2\de(x_n)^2}{t}},
		\end{align}
		which also holds for $z$ instead of $z_n$. Since $\frac{\partial}{\partial \mathbf{n}}p_D(t,x,z)\in C((0,\infty)\times \overline D\times \partial D)$, see Lemma \ref{apx:l:regularity of heat kernel}, by using the dominated convergence theorem with the bound derived from \eqref{eq:eq2 Poisson estimate} we get
		\begin{align*}
			|\PDfi(x,z)-\PDfi(x_n,z_n)|\le\int_0^\infty\left|\frac{\partial}{\partial \mathbf{n}}p_D(t,x_n,z_n)-\frac{\partial}{\partial \mathbf{n}}p_D(t,x,z)\right|\uu(t)dt\to 0,\quad \textrm{as }n\to\infty.
		\end{align*}

		We are left to prove \eqref{eq:Green Poisson identity}. Obviously, Lemma \ref{l: GDf(GDf*)=GD} implies
		\begin{align*}
			-\frac{\partial}{\partial \mathbf{n}}\left(\int_D \GDfz(x,\xi)\GDfi(\xi,\cdot)d\xi\right)(z)=P_{D}(x,z),\quad x\in D,z\in\partial D.
		\end{align*}
		We need to justify that the normal derivative can go inside the integral. To this end, let $x\in D$, $z\in\partial D$, and $\varepsilon>0$ such that $\de(x)>3\varepsilon$. Again, we only consider $y\in D$ which is in the direction of the normal derivative. For $|z-y|\le \varepsilon/2$ we have
		\begin{align*}
			\int_D \GDfz(x,\xi)\frac{\GDfi(\xi,y)}{\de(y)}d\xi&=	\int_{D\cap B(z,\varepsilon)^c} \GDfz(x,\xi)\frac{\GDfi(\xi,y)}{\de(y)}d\xi+\int_{D\cap B(z,\varepsilon)} \GDfz(x,\xi)\frac{\GDfi(\xi,y)}{\de(y)}d\xi\\
			&\eqqcolon I_1+I_2.
		\end{align*}
		For the integral $I_1$ by the sharp bounds \eqref{eq:Green function sharp estimate} we have
		\begin{align}\label{eq:eq3 Green over delta}
			\frac{\GDfi(\xi,y)}{\de(y)}\lesssim\frac{\de(\xi)}{|\xi-y|^{d+2}\phi(|\xi-y|^{-2})}.
		\end{align}
		Thus, if $\xi \in D\cap B(z,\varepsilon)^c$, we have $\frac{\GDfi(\xi,y)}{\de(y)}\le  c_3\de(\xi)$, where $c_3=c_3(\phi,D,d,\varepsilon)>0$. Further, $\GDfz\de\asymp \de$ by Lemma \ref{l:rate of GDdelta}, hence the integral $I_1$ converges to $$\int_{D\cap B(y,\varepsilon)^c}\GDfz(x,\xi) \PDfi(\xi,z)d\xi,$$ as $y\to z$.
		
		The integral $I_2$ we break  into  two additional integrals
		\begin{align*}
			I_2&=\int_{D\cap B(z,\varepsilon)} \GDfz(x,\xi)\frac{\GDfi(\xi,y)}{\de(y)}d\xi\\
			&\le \int_{B\left(y,\frac{\de(y)}{2}\right)}\GDfz(x,\xi)\frac{\GDfi(\xi,y)}{\de(y)}d\xi+\int_{D\cap B\left(y,\frac{\de(y)}{2}\right)^{c}\cap B(y,2\varepsilon)}\GDfz(x,\xi)\frac{\GDfi(\xi,y)}{\de(y)}d\xi\\&\eqqcolon J_1+J_2.
		\end{align*}
		Recall that $3\varepsilon\le \de(x)$ so $\frac16 |x-z|\le |x-\xi|\le 2|x-z|$ for all $\xi \in B(y,2\varepsilon)$. Hence, \eqref{eq:Green function sharp estimate} applied on $\GDfz$ implies
		\begin{align}\label{eq:GDFIZ upper bound near boundary}
			\GDfz(x,\xi)\le c_4 \de(\xi), \quad \xi \in B(y,2\varepsilon)\cap D,
		\end{align}
		where $c_4=c_4(d,D,\phi^*,|x-z|)>0$ and is independent of $\varepsilon$ in the sense if $\varepsilon\to0$, the constant $c_4$ remains the same.
		
		For $J_1$ note that $\de(\xi)\le \frac32\de(y)$ for $\xi\in B(y,\de(y)/2)$ so by using the bounds \eqref{eq:Green function sharp estimate} and \eqref{eq:GDFIZ upper bound near boundary} we have
		\begin{align*}
			J_1&\le c_5  \int_{B\left(y,\frac{\de(y)}{2}\right)}\frac{\de(\xi)}{\de(y)}\frac{1}{|\xi-y|^{d}\phi(|\xi-y|^{-2})}\le c_6 \int_{B\left(y,\frac{\de(y)}{2}\right)}\frac{1}{|\xi-y|^{d}\phi(|\xi-y|^{-2})}\\
			&\le c_7 \int_0^{\de(y)/2}\frac{\phi'(r^{-2})}{r^3\phi(r^{-2})^2}dr\le c_8 \frac{1}{\phi(4/\de(y)^2)},
		\end{align*}
		where $c_8$ is independent of $y$ and $\varepsilon$. In the second to last inequality we used \eqref{eq:scaling and the derivative} and for the last one we used the substitution $t=\phi(r^{-2})$.
		
		For $J_2$ note that $\de(\xi)\le \de(y)+|y-\xi|\le 3|\xi-y|$, for $\xi\in B(y,\de(y)/2)^c$, hence by the sharp bounds \eqref{eq:Green function sharp estimate} we have
		\begin{align*}
			J_2\le c_{9}\int_{B\left(y,\frac{\de(y)}{2}\right)^c\cap B(y,2\varepsilon)}\frac{1}{|\xi-y|^{d}\phi(|\xi-y|^{-2})}\le c_{10}\int_{\de(y)/2}^{2\varepsilon}\frac{\phi'(r^{-2})}{r^3\phi(r^{-2})^2}dr\le c_{11} \frac{1}{\phi(\frac{1}{4\varepsilon^2})},
		\end{align*}
		where $c_{11}$ is independent of $y$ and $\varepsilon$. Hence, for sufficiently small $\varepsilon$ the integral $I_2$ can be made sufficiently small. Thus, \eqref{eq:Green Poisson identity} holds.
	\end{proof}

	\begin{rem}
		We emphasize that the assumption of the regularity of $\partial D$ was essential in the proof of the previous proposition. Prior to the proof, regularity was used for obtaining the sharp bounds for $\GDFI$ and $\GDfz$ and for proving the regularity of $p_D$. This led to showing the well-definiteness of $\PDFI$, to the sharp bounds for $\PDFI$, and to the identity \eqref{eq:Green Poisson identity}. In the remainder of the section, the regularity of $\partial D$ will be also heavily used but we omit comments like this one from now on. 
	\end{rem}

	Now we deal with harmonic functions with respect to the operator $\Lo$. Our first goal is to show the integral representation of positive harmonic functions which we show in Theorem \ref{t:PDFI harmonic}. After that, in Theorem \ref{t:PD conti} we show the continuity of harmonic functions and at the end of the subsection we connect harmonic functions with functions that satisfy  a  certain mean-value property with respect to $X$, see Theorem \ref{t:harmonic functions}.
	
	\begin{defn}\label{d:harmonic function}
		A function $h\in \LLL$ is called harmonic in $D$ if $\wLo h=0$ in $D$.
	\end{defn}
	First,  we  present  a connection between harmonic functions  and  classical harmonic functions.
	\begin{prop}\label{p:Poisson kernel harmonic}
		A function $h\in \LLL$ is harmonic in $D$ if and only if $\GDfz h$ is a classical harmonic function in $D$. In particular, for every $z\in\partial D$, the function $x\mapsto \PDfi(x,z)$ is harmonic in $D$.
	\end{prop}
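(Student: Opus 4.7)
The plan is to reduce the equivalence to the duality identity
\begin{equation*}
\int_D \GDfz h(x)\, (-\Delta \psi)(x)\, dx \;=\; \int_D h(y)\, \Lo \psi(y)\, dy, \qquad \psi\in C_c^\infty(D),
\end{equation*}
which should hold for every $h\in\LLL$. Granted this identity, the equivalence is immediate: if $\wLo h=0$ then the right-hand side vanishes for all test functions, so $\Delta(\GDfz h)=0$ in the distributional sense, and since $\GDfz h\in L^1_{loc}(D)$ (by the $\phi^*$-analog of Lemma \ref{l:rate of GDdelta}, because $\delta_D$ is bounded below on compact subsets of $D$), Weyl's lemma promotes this to classical harmonicity; the converse direction is identical, reading the chain of equivalences backwards.

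To establish the duality identity I would first use symmetry of $\GDfz$ together with Fubini to write
$$\int_D \GDfz h(x)\,(-\Delta\psi)(x)\,dx = \int_D h(y)\,\GDfz(-\Delta\psi)(y)\,dy.$$
The interchange is legitimate because $\Delta\psi$ is bounded and supported in some $K\subsub D$, and $\GDfz|h|\in\LLL$ is finite on $K$ (where $\delta_D$ is bounded below). Next I would identify $\GDfz(-\Delta\psi)$ with $\Lo\psi$: since the coefficients $\widehat\psi_j$ decay faster than any polynomial by \eqref{eq:Ccinfty coef bound}, both $\psi$ and $\Lo\psi$ lie in $H_{\phi^*}(D)$, Lemma \ref{l: Delta=phi(Delta)phi(Delta)} gives $-\Delta\psi=\Loz(\Lo\psi)$ in $L^2(D)$, and the $\phi^*$-version of Proposition \ref{p:LGDf=-f} (which asserts $\GDfz=[\Loz]^{-1}$ on $L^2(D)$) yields
$$\GDfz(-\Delta\psi)=[\Loz]^{-1}\Loz(\Lo\psi)=\Lo\psi\quad\text{a.e.\ in }D.$$
Substituting this back recovers the right-hand side $\int_D h\,\Lo\psi\,dx=\langle\wLo h,\psi\rangle$ of the duality identity.

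For the particular case, fix $z\in\partial D$. The sharp estimate \eqref{eq:Poisson sharp bounds} combined with $\delta_D(x)\le|x-z|$ and the standard change of variables $t=\phi(r^{-2})$ (as in \eqref{eq:cont GDf eq1}) shows $\PDfi(\cdot,z)\in\LLL$. By identity \eqref{eq:Green Poisson identity} we have $\GDfz\PDfi(\cdot,z)=P_D(\cdot,z)$, the classical Poisson kernel of $D$, which is classically harmonic in its first argument (the Green function $\GD(\cdot,z)$ is harmonic away from $z$, and this harmonicity is preserved under the boundary normal derivative in $z$). Applying the equivalence just proved gives that $\PDfi(\cdot,z)$ is harmonic with respect to $\Lo$.

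The main obstacle is the spectral identification $\GDfz(-\Delta\psi)=\Lo\psi$: one must verify that $\Lo\psi$ actually lies in the domain $H_{\phi^*}(D)=\DD(\Loz)$ so that the inversion $[\Loz]^{-1}\Loz(\Lo\psi)=\Lo\psi$ is legitimate. This is where the rapid decay \eqref{eq:Ccinfty coef bound} of $\widehat\psi_j$ together with the identity $\phi(\lambda_j)\phi^*(\lambda_j)=\lambda_j$ is essential, since it reduces the required finiteness of $\sum_j\phi^*(\lambda_j)^2\phi(\lambda_j)^2|\widehat\psi_j|^2$ to that of $\sum_j\lambda_j^2|\widehat\psi_j|^2$, which is trivial for $C_c^\infty$ data.
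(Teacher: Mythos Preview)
Your proof is correct and follows essentially the same route as the paper's: both establish the duality identity $\int_D h\,\Lo\psi = \int_D \GDfz h\,(-\Delta\psi)$ by combining the factorization Lemma \ref{l: Delta=phi(Delta)phi(Delta)} with the $\phi^*$-version of Proposition \ref{p:LGDf=-f} (so that $\GDfz(-\Delta\psi)=\Lo\psi$), together with Fubini, and then handle the particular case via \eqref{eq:Green Poisson identity}. The only cosmetic difference is the order in which you apply Fubini and the spectral identification; your explicit invocation of Weyl's lemma and your careful check that $\Lo\psi\in H_{\phi^*}(D)$ are welcome elaborations but not new ideas.
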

	\begin{proof}
		The first part of the claim follows by the following calculation. Take $\psi \in C_c^\infty(D)$. Then by using Lemma \ref{l: Delta=phi(Delta)phi(Delta)}, Proposition \ref{p:LGDf=-f}, and Fubini's theorem we have
		\begin{align*}
			\int_D h(x)\Lo \psi(x)dx&=\int_D h(x)\left[\Loz^{-1}\circ (-\Delta)\psi(x)\right]dx\\
			&=\int_D h(x)\GDfz((-\Delta)\psi)(x)dx\\
			&=-\int_D \GDfz h(x)\,\Delta\psi(x)dx,
		\end{align*}
		i.e. $h$ is harmonic if and only if $\GDfz h$ is a classical harmonic function in $D$.
		
		If $z\in\partial D$, then $\PDFI(\cdot,z)\in \LLL$ by the bound \eqref{eq:Poisson sharp bounds}, see also the beginning of the proof of Theorem \ref{t:PDFI harmonic} with $\zeta=\delta_z$. The second claim now follows from \eqref{eq:Green Poisson identity} and the fact that the kernel $\PD(\cdot,z)$ is  a  classical harmonic function.
	\end{proof}
	
	\begin{thm}\label{t:PDFI harmonic}
		If a non-negative function $h\in \LLL$ is harmonic in $D$, then there exists a finite non-negative  measure $\zeta\in\MM(\partial D)$ such that 
		\begin{align}\label{eq:representation of haramonic}
			h(x)=\int_{\partial D}\PDfi(x,z)\zeta(dz),\quad \text{for a.e. }x\in D.
		\end{align}
		Moreover, there is $C=C(d,D,\phi)>0$ such that 
		\begin{align}\label{eq:harmonic L1 bound}
			\|h\|_{\LLL}\le C\|\zeta\|_{\MM(\partial D)}.
		\end{align}
		Conversely, every function of the form \eqref{eq:representation of haramonic} is harmonic in $D$.
	\end{thm}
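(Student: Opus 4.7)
The strategy is to reduce the representation problem for $\Lo$-harmonic functions to the classical Poisson representation via the identity $\GDfz \circ (\Lo\text{-harmonic}) = (\text{classical harmonic})$ furnished by Proposition \ref{p:Poisson kernel harmonic}, and then to invert using \eqref{eq:Green Poisson identity}. I start with the converse: given a finite non-negative $\zeta \in \MM(\partial D)$, set $h(x) \coloneqq \int_{\partial D} \PDfi(x,z)\,\zeta(dz)$. Using the sharp bound \eqref{eq:Poisson sharp bounds} together with $\de(x) \le |x-z|$ for $z \in \partial D$ and the lower weak scaling from \ref{A1}, I would estimate
\begin{align*}
\int_D \de(x) \PDfi(x,z)\,dx \lesssim \int_D \frac{dx}{|x-z|^{d}\phi(|x-z|^{-2})} \lesssim \int_0^{\diam D} \frac{dr}{r\,\phi(r^{-2})} < \infty
\end{align*}
uniformly in $z \in \partial D$, which delivers both $h \in \LLL$ and the bound \eqref{eq:harmonic L1 bound} via Fubini. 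A second application of Fubini combined with \eqref{eq:Green Poisson identity} yields $\GDfz h(x) = \int_{\partial D} P_D(x,z)\,\zeta(dz)$, the classical Poisson integral of $\zeta$, hence classical harmonic in $D$. Proposition \ref{p:Poisson kernel harmonic} then implies $h$ is harmonic.

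For the forward direction, let $h \in \LLL$ be non-negative and harmonic, and put $u \coloneqq \GDfz h$. By the $\phi^*$-analogue of Lemma \ref{l:rate of GDdelta} we have $u \in \LLL$, and by Proposition \ref{p:Poisson kernel harmonic}, $u$ is classical harmonic in the distributional sense; Weyl's lemma upgrades $u$ to a non-negative smooth harmonic function on $D$. Since $D$ is a bounded $C^{1,1}$ domain, the classical Herglotz/Martin representation produces a unique non-negative Borel measure $\zeta \in \MM(\partial D)$ with $u(x) = \int_{\partial D} P_D(x,z)\,\zeta(dz)$, and finiteness of $\zeta$ is guaranteed by $u \in \LLL$ together with $\int_D \de(x) P_D(x,z)\,dx \asymp 1$ uniformly in $z$.

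Define now $\tilde h(x) \coloneqq \int_{\partial D} \PDfi(x,z)\,\zeta(dz)$. By the converse direction, $\tilde h \in \LLL$, and Fubini together with \eqref{eq:Green Poisson identity} gives
\begin{align*}
\GDfz \tilde h(x) = \int_{\partial D} P_D(x,z)\,\zeta(dz) = u(x) = \GDfz h(x) \quad \text{a.e. in } D.
\end{align*}
Applying the $\phi^*$-analogue of Proposition \ref{p:wLo GDF=f no.2} (valid since $\phi^*$ satisfies \ref{A1}) to the signed measures $h(y)\,dy$ and $\tilde h(y)\,dy$, both of which have finite $\de$-integral, yields $h = \tilde h$ as distributions, hence a.e.\ in $D$. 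The main delicate points of the argument are invoking the classical Herglotz theorem with the correct finiteness of $\zeta$, which I would verify through the comparison $P_D(x,z) \asymp \de(x)|x-z|^{-d}$ on bounded $C^{1,1}$ domains, and justifying the several Fubini interchanges via the sharp bounds for $\GDfz$ and $\PDfi$.
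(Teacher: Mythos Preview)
Your proof is correct and follows essentially the same route as the paper: both directions hinge on Proposition~\ref{p:Poisson kernel harmonic} to pass between $\Lo$-harmonicity of $h$ and classical harmonicity of $\GDfz h$, then invoke the classical Poisson representation and the identity~\eqref{eq:Green Poisson identity}, with the $\phi^*$-version of Proposition~\ref{p:wLo GDF=f no.2} providing the injectivity of $\GDfz$ needed to conclude $h=\PDfi\zeta$. Your additions (Weyl's lemma, the explicit verification that $\zeta$ is finite via $u\in\LLL$) are details the paper leaves implicit, but the architecture is the same.
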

	\begin{proof}
		Let $h$ be represented as \eqref{eq:representation of haramonic}. Since $\PDfi(x,\cdot)\in C(\partial D)$ for fixed $x\in D$ by Proposition \ref{p:Poisson kernel definition}, hence bounded, the function $h$ is well defined. Further, since $\de(x)\le |x-z|$, $z\in \partial D$, from \eqref{eq:Poisson sharp bounds} and Fubini's theorem we get
		\begin{align*}
			\int_D h(x)\de(x)dx&\le c_1\int_{\partial D}\int_D \frac{\de(x)^2}{|x-z|^{d+2}}\frac{1}{\phi(|x-z|^{-2})}dx\,\zeta(dz)\\
			&\le c_1 \int_{\partial D}\int_{B(z,\diam D)}\frac{1}{|x-z|^{d}\phi(|x-z|^{-2})}dx\,\zeta(dz)\\
			&\le c_2\int_{\partial D}\frac{\zeta(dz)}{\phi(\diam D^{-2})}<\infty,
		\end{align*}
		where $c_2=c_2(d,D,\phi)>0$, i.e. $h\in\LLL$ and $\|h\|_{\LLL}\le C\|\zeta\|_{\MM(\partial D)}$. Take now $\psi \in C_c^\infty(D)$. Fubini's theorem and Proposition \ref{p:Poisson kernel harmonic} yield
		\begin{align*}
			\int_D \PDfi\zeta(x)\Lo\psi(x)dx=\int_{\partial D}\left(\int_D\PDfi(x,z)\Lo\psi(x)dx\right)\zeta(dz)=0,
		\end{align*}
		i.e. $h$ is harmonic in $D$.
		
		Conversely, let $h$ be a non-negative harmonic function in $D$. Then $\GDfz h$ is a classical non-negative harmonic function in $D$ by Proposition \ref{p:Poisson kernel harmonic}. By the representation of  non-negative classical harmonic functions there is a non-negative finite measure $\zeta\in\MM(\partial D)$ such that
		\begin{align}\label{eq:representation eq0}
			\GDfz h(x)=\int_{\partial D}P_{D}(x,z)\zeta(dz),\quad \text{ for a.e. $x\in D$}.
		\end{align}
		Applying \eqref{eq:Green Poisson identity} to the right hand side of \eqref{eq:representation eq0} we get 
		\begin{align}\label{eq:representation eq1}
			\int_D \GDfz(x,\xi)h(\xi)d\xi=\int_D\GDfz(x,\xi)\left[\int_{\partial D}\PDfi(\xi,z)\zeta(dz)\right]d\xi, \quad \text{ for a.e. $x\in D$}.
		\end{align}
		By using Proposition \ref{p:wLo GDF=f no.2} in \eqref{eq:representation eq1} we obtain
		\begin{align*}
			h(\xi)=\int_{\partial D}\PDfi(\xi,z)\zeta(dz),\quad \text{for  a.e. $\xi$ in $D$.}
		\end{align*}
	\end{proof}
	
	Motivated by the previous theorem, we introduce the definition of the Poisson integral.
	\begin{defn}\label{d:Poisson integral}
		For a finite signed measure $\zeta\in \MM(\partial D)$ we define the Poisson integral of $\zeta$ by
		\begin{align*}
			\PDFI\zeta(x)\coloneqq \int_{\partial D}\PDFI(x,z)\zeta(dz),\quad x\in D.
		\end{align*}
	\end{defn}
	Note that  the  finiteness of the (signed) measure $\zeta$ in the previous definition is  a  necessary and sufficient condition  for  for the integral defining $\PDFI \zeta$  to be  finite, see \eqref{eq:Poisson sharp bounds}. If $\zeta\in L^1(\partial D)$, we slightly abuse the notation in Definition \ref{d:Poisson integral} where we set $\PDFI \zeta(x)=\int_{\partial D}\PDFI(x,z)\zeta(z)\sigma(dz)$, where $\sigma$ is the $d-1$ dimensional  Hausdorff measure on $\partial D$. Since the set $D$ is $C^{1,1}$, the measure $\sigma$ is finite so we can define the Poisson integral of $\sigma$
	\begin{align}\label{eq:defn of PDFIsigma}
		\PDfi\sigma(x)= \int_{\partial D}\PDfi(x,z)\sigma(dz), \quad x\in D,
	\end{align}
	which will be of great importance for the boundary condition of the semilinear problem.
	
	We finish the subsection  with  two properties of harmonic functions of the form $\PDFI\zeta$.
	\begin{thm}\label{t:PD conti}
		A non-negative harmonic function in $D$ is continuous in $D$ (after a modification on the Lebesgue null set). Furthermore,  for every finite (signed) measure $\zeta\in\MM(\partial D)$, we have $\PDFI\zeta\in C(D)$.
	\end{thm}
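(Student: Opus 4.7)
The plan is to reduce the first claim to the second via Theorem~\ref{t:PDFI harmonic}, then to establish continuity of $\PDFI\zeta$ by a dominated-convergence argument that leverages the joint continuity of the Poisson kernel.

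First I would address the second statement. Given a finite signed measure $\zeta\in\MM(\partial D)$, use its Jordan decomposition $\zeta=\zeta^+-\zeta^-$ and linearity of Definition~\ref{d:Poisson integral} to reduce to the case where $\zeta$ is a finite non-negative measure. Fix an arbitrary $x_0\in D$; the goal is to show $\PDFI\zeta$ is continuous at $x_0$. Choose $r\in(0,\de(x_0)/2)$ and set $K\coloneqq\overline{B(x_0,r)}\subset D$. Then $K\times\partial D$ is compact (since $D$ is bounded, $\partial D$ is compact) and contained in $D\times\partial D$. By Proposition~\ref{p:Poisson kernel definition}, the Poisson kernel $\PDFI$ is continuous on $D\times\partial D$, hence bounded on $K\times\partial D$ by some constant $M=M(x_0,r)<\infty$. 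The constant $M$ is $\zeta$-integrable on $\partial D$ because $\zeta(\partial D)<\infty$.

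Take any sequence $(x_n)_n\subset K$ with $x_n\to x_0$. By joint continuity, $\PDFI(x_n,z)\to\PDFI(x_0,z)$ for every $z\in\partial D$, and $|\PDFI(x_n,z)|\le M$ for all $n$. The dominated convergence theorem then gives
\begin{align*}
\PDFI\zeta(x_n)=\int_{\partial D}\PDFI(x_n,z)\,\zeta(dz)\longrightarrow\int_{\partial D}\PDFI(x_0,z)\,\zeta(dz)=\PDFI\zeta(x_0),
\end{align*}
proving $\PDFI\zeta\in C(D)$.

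For the first statement, let $h\in\LLL$ be a non-negative harmonic function in $D$. By Theorem~\ref{t:PDFI harmonic}, there exists a finite non-negative measure $\zeta\in\MM(\partial D)$ such that $h(x)=\PDFI\zeta(x)$ for a.e. $x\in D$. Since the right-hand side is continuous in $D$ by the previous paragraph, modifying $h$ on the Lebesgue-null set where equality fails produces a continuous representative of $h$.

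There is no real obstacle here: the only non-trivial input is the joint continuity of $\PDFI$ on $D\times\partial D$, which is already established in Proposition~\ref{p:Poisson kernel definition}. The care needed is merely to choose a compact neighborhood $K$ of $x_0$ bounded away from $\partial D$ so that $\PDFI$ remains bounded on $K\times\partial D$ (the bound blows up as $x$ approaches $\partial D$, as seen from~\eqref{eq:Poisson sharp bounds}), which justifies the use of dominated convergence against the finite measure $\zeta$.
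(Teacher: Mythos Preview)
Your proof is correct and follows essentially the same route as the paper: reduce the first claim to the second via Theorem~\ref{t:PDFI harmonic}, then prove $\PDFI\zeta\in C(D)$ by dominated convergence using the continuity of the Poisson kernel from Proposition~\ref{p:Poisson kernel definition}. The only cosmetic difference is that the paper invokes the sharp bounds~\eqref{eq:Poisson sharp bounds} directly to obtain the dominating function, whereas you obtain it via joint continuity on the compact set $K\times\partial D$; both justifications are valid.
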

	\begin{proof}
		Let $h\in \LLL$ be a non-negative harmonic function in $D$. 
		By Theorem 	\ref{t:PDFI harmonic} there exists a finite non-negative  measure $\zeta\in\MM(\partial D)$ such that $h=\PDFI\zeta$ a.e. in $D$. In Proposition \ref{p:Poisson kernel definition} it was proved that the function $\PDFI(\cdot,\cdot)$ is continuous in the first variable and that the sharp bounds \eqref{eq:Poisson sharp bounds} hold, so we can use the dominated convergence theorem to get $\PDFI\zeta \in C(D)$. 
	\end{proof}
	
	In the theory of Markov processes, harmonicity of a function is considered relative to the process itself, i.e. it is said that a function $f:D\to[-\infty,\infty]$ is harmonic in $D$ with respect to $X$ if for every $U\subsub D$ and $x\in U$
	\begin{align}\label{eq:harmonic function}
		h(x)=\ex_x[h(X_{\tau^X_U})]
	\end{align}
	holds, where $\tau^X_U=\inf\{t>0:X_t\notin U\}$ and where we implicitly assume $\ex_x[|h(X_{\tau^X_U})|]<\infty$ for every $x\in U\subsub D$. The relation \eqref{eq:harmonic function} is often referred to as the mean-value property of the function $f$ with respect to $X$. In order not to confuse, if $f$ is harmonic in $D$ with respect to $X$, we will say that $f$ satisfies the mean-value property with respect to $X$. We note that $\ex_x[|h(X_{\tau^X_U})|]<\infty$ for every $x\in U\subsub D$ implies that $f\in \LLL$, see the proof of \cite[Lemma 3.6]{ksv_minimal2016} where instead of the inequality $U^{D,B}(x,y)\le G_X(x,y)$ use $U^{D,B}(x,y)\le \GDFI(x,y)$.
	
	The connection between non-negative functions that satisfy the mean-value property with respect to $X$ and non-negative functions that satisfy the mean-value property with respect to $W^D$ is known due to \cite[Theorem 3.6]{song_vondra_JTP2006} which we cite in the next claim.
	
	\begin{thm}\label{t:harmonic and classical}
		If a non-negative function $h$ satisfies the mean-value property in $D$ with respect to $X$, then $s\coloneqq \GDfz h$ satisfies the mean-value property in $D$ with respect to $W^D$. Conversely, if a non-negative  function  $s$ satisfies the mean-value property in $D$ with respect to $W^D$, then 
		\begin{align}\label{eq:harmonic pointwise rep}
			h(x)\coloneqq\int_0^\infty \left(s(x)-P_t^Ds(x)\right)\nu(t)dt=\pLoz s(x), \quad x\in D,
		\end{align}
		satisfies the mean-value property in $D$ with respect to $X$, $h$ is continuous and $\GDfz h=s$.
	\end{thm}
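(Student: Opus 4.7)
My plan is to follow the analytic route via the factorization $\Lo\circ\Loz=-\left.\Delta\right\vert_{D}$ from Lemma~\ref{l: Delta=phi(Delta)phi(Delta)} and the identification in Proposition~\ref{p:Poisson kernel harmonic} of distributional $\Lo$-harmonicity of $h\in\LLL$ with classical harmonicity of $\GDfz h$. The key intermediate fact to establish is the equivalence, for non-negative $h\in\LLL$, between the mean-value property with respect to $X$ in the sense of \eqref{eq:harmonic function} and the distributional identity $\wLo h=0$ of Definition~\ref{d:harmonic function}. Once this equivalence is in hand, the first direction is immediate: the mean-value property of $h$ with respect to $X$ gives $\wLo h=0$, so $s=\GDfz h$ is classically harmonic by Proposition~\ref{p:Poisson kernel harmonic}, and non-negative classically harmonic functions on a $C^{1,1}$ domain satisfy the mean-value property with respect to $W^D$ by Doob's optional stopping applied to the non-negative supermartingale $t\mapsto s(W^D_{t\wedge\tau_U})$.

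For the second direction, if $s\ge 0$ satisfies the mean-value property with respect to $W^D$, then $s$ is classically harmonic, so the classical Riesz representation gives $s=P_{D}\zeta$ for some non-negative $\zeta\in\MM(\partial D)$. Setting $h:=\PDFI\zeta$, Theorem~\ref{t:PDFI harmonic} and Theorem~\ref{t:PD conti} give $h\in C(D)\cap\LLL$, and Fubini applied to the identity $\GDfz\PDFI(\cdot,z)=P_{D}(\cdot,z)$ of Proposition~\ref{p:Poisson kernel definition} yields $\GDfz h=s$. The mean-value property of $h$ with respect to $X$ then follows from the equivalence above, since $\wLo h=0$ by Proposition~\ref{p:Poisson kernel harmonic}. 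The remaining pointwise identity $h(x)=\int_0^\infty(s(x)-P_t^Ds(x))\nu(t)dt=\pLoz s(x)$ is the Bochner--Phillips representation of $\Loz$ evaluated on the locally smooth $s$; I would verify it by dominated convergence on $(0,1]\cup(1,\infty)$, using the local Lipschitz estimate for $s$ together with \eqref{eq:heat kernel estimate} for small times, and the large-time eigen-asymptotics $p_D(t,x,y)\asymp e^{-\lambda_1 t}\varphi_1(x)\varphi_1(y)$ combined with the integrability $\int_D \varphi_1(y)s(y)dy<\infty$ (which holds since $s=P_{D}\zeta$ and $\int_D\varphi_1(y)P_{D}(y,z)dy$ is bounded uniformly in $z\in\partial D$).

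The main obstacle is the equivalence between the mean-value property with respect to $X$ and $\wLo h=0$, which the author defers to \cite[Theorem 3.6]{song_vondra_JTP2006}. The forward implication can be established by testing $\int_Dh\Lo\psi\,dx$ with $\psi\in C_c^\infty(D)$: choose $U\subsub D$ containing $\supp\psi$, apply Dynkin's formula for $X$ inside $U$ combined with the mean-value property hypothesis to obtain a pointwise identity for $h$ on $U$, and then conclude $\int_D h\Lo\psi\,dx=0$ using Fubini, the boundary decay $|\Lo\psi|\lesssim\de$ from Lemma~\ref{l: Lo psi < C delta}, and $h\in\LLL$. The converse reduces, via the representation $h=\PDFI\zeta$ of Theorem~\ref{t:PDFI harmonic}, to showing that each kernel function $\PDFI(\cdot,z)$ itself satisfies the mean-value property with respect to $X$; this can be obtained from $\GDfz\PDFI(\cdot,z)=P_{D}(\cdot,z)$ of Proposition~\ref{p:Poisson kernel definition} and a Khasminskii-type subordination identity linking $X$-exit distributions from $U\subsub D$ to $W^D$-exit distributions. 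The delicate aspect in both directions is that $X$ is a pure-jump process of infinite range inside $D$, so $X_{\tau^X_U}$ is distributed throughout $D\setminus U$ and the integrability of $h(X_{\tau^X_U})$ against the $X$-exit distribution must be reconciled with $h\in\LLL$ via the sharp bounds \eqref{eq:jumping kernel sharp estimate}.
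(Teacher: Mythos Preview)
The paper's proof is essentially a one-line citation: both directions of the mean-value correspondence (MVP w.r.t.\ $X$ $\leftrightarrow$ MVP w.r.t.\ $W^D$ via the $\GDfz$-transform) are taken directly from \cite[Theorem 3.6]{song_vondra_JTP2006}, which is a purely probabilistic subordination argument. The only content the paper adds is the second equality in \eqref{eq:harmonic pointwise rep}: using the bound $|s(x)-P_t^Ds(x)|\le c(1\wedge t)$ on compacts (taken from the proof of \cite[Lemma 3.4]{song_vondra_JTP2006}) together with the fact that the classical harmonic function $s$ is $C^\infty(D)$, one repeats the computation of Lemma~\ref{l:Lo pointwisely} to identify the Bochner integral with $\pLoz s$.

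Your route is genuinely different---you aim for an analytic proof via the equivalence ``MVP w.r.t.\ $X$ $\Leftrightarrow$ $\wLo h=0$''---but as written it is circular against the paper's logic and does not close on its own. That equivalence is precisely Theorem~\ref{t:harmonic functions}, which the paper places \emph{after} Theorem~\ref{t:harmonic and classical} and proves \emph{using} it; so you cannot invoke it here, and your independent sketch is not sufficient. For the forward implication, Dynkin's formula applies to the test function $\psi\in C_c^\infty(D)$, not to $h$; turning $h(x)=\ex_x[h(X_{\tau^X_U})]$ into $\int_D h\,\Lo\psi\,dx=0$ requires a L\'evy-system computation relating the $X$-exit distribution from $U$ to $J_D$ and $\kappa$, which you do not carry out. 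For the converse, you reduce to showing each $\PDFI(\cdot,z)$ has the MVP w.r.t.\ $X$ via ``a Khasminskii-type subordination identity''---but that identity \emph{is} the content of \cite[Theorem 3.6]{song_vondra_JTP2006}, so you have not replaced the probabilistic input with an analytic one. Note also a misreading: what the paper defers to Song--Vondra\v{c}ek is the direct MVP-to-MVP correspondence of Theorem~\ref{t:harmonic and classical} itself, not the distributional characterization $\wLo h=0$; the latter is the paper's own Theorem~\ref{t:harmonic functions}. Your construction $h:=\PDFI\zeta$ from $s=\PD\zeta$ and the verification of the pointwise identity are fine and in fact parallel what the paper does, but they do not by themselves yield the MVP for $h$ without the missing equivalence.
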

	\begin{proof}
		Everything follows from \cite[Theorem 3.6]{song_vondra_JTP2006} except the second equality in \eqref{eq:harmonic pointwise rep}. To finish the proof, it follows from the proof of \cite[Lemma 3.4]{song_vondra_JTP2006} that
		$$|s(x)-P_t^Ds(x)|\le c(1\wedge t),\quad x\in K,$$
		where $K$ is any compact subset of $D$ and $c=c(d,D,s_{|K})>0$. Also, $s\in C^\infty(D)$ since it is a classical harmonic function so by the same calculations as in Lemma \ref{l:Lo pointwisely} we get that
		$$\int_0^\infty \left(s(x)-P_t^Ds(x)\right)\nu(t)dt=\pLoz s.$$
	\end{proof}
	
	The following theorem says that non-negative harmonic functions and non-negative functions with the  mean-value property with respect to $X$ are essentially the same.
	\begin{thm}\label{t:harmonic functions}
		If a non-negative function $h\in \LLL$ is harmonic in $D$, then (after a modification on the Lebesgue null set) $h$ satisfies the mean-value property with respect to $X$. Conversely, if $h\ge0$ satisfies the mean-value property with respect to $X$, then $h$ is harmonic in $D$.
	\end{thm}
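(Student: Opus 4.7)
The plan is to use the integral representation of Theorem \ref{t:PDFI harmonic} together with the bridge between $X$-harmonic and classically harmonic functions provided by Theorem \ref{t:harmonic and classical}. The central observation is that the operator $\GDfz$ is injective on $\LLL$: indeed, Proposition \ref{p:wLo GDF=f no.2} applied to $\phi^*$ (the paper explicitly extends such statements to $\phi^*$) yields $\wLoz\GDfz(f\,dx)=f\,dx$ for every $f\in\LLL$, so $\GDfz f=0$ forces $f=0$ a.e. This injectivity will be the key tool to identify the representatives in the first direction.

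For the forward direction, suppose $h\in\LLL$ is non-negative and harmonic in $D$. By Theorem \ref{t:PDFI harmonic} there exists a non-negative finite $\zeta\in\MM(\partial D)$ with $h=\PDFI\zeta$ a.e.\ in $D$. Set $s\coloneqq \PD\zeta$, which is a non-negative classical harmonic function in $D$. By the converse part of Theorem \ref{t:harmonic and classical} applied to $s$, the function $\tilde h\coloneqq\pLoz s$ is continuous, satisfies the mean-value property with respect to $X$, and obeys $\GDfz\tilde h=s$. On the other hand, from \eqref{eq:Green Poisson identity} and Fubini (legitimate because $\PDFI(\xi,z)$ is jointly continuous with the sharp bound \eqref{eq:Poisson sharp bounds}, and $\zeta$ is finite) we get $\GDfz(\PDFI\zeta)(x)=\int_{\partial D}\PD(x,z)\zeta(dz)=s(x)$. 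Hence $\GDfz(\PDFI\zeta-\tilde h)=0$ with both summands in $\LLL$ (the first by \eqref{eq:harmonic L1 bound}, the second by the paper's note that any function with the mean-value property w.r.t.\ $X$ lies in $\LLL$), so by the injectivity above $\PDFI\zeta=\tilde h$ a.e. Modifying $h$ on the corresponding null set, $h$ coincides with the continuous function $\tilde h$ and therefore satisfies the mean-value property with respect to $X$.

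For the converse, let $h\ge 0$ satisfy the mean-value property with respect to $X$; in particular $h\in\LLL$ by the note quoted above. Applying the direct part of Theorem \ref{t:harmonic and classical}, the function $s\coloneqq\GDfz h$ satisfies the mean-value property with respect to $W^D$ and is thus a classical (hence smooth) harmonic function in $D$. To verify Definition \ref{d:harmonic function}, fix $\psi\in C_c^\infty(D)$ and use Lemma \ref{l: Delta=phi(Delta)phi(Delta)} together with Proposition \ref{p:LGDf=-f} (for $\phi^*$) to rewrite $\Lo\psi=\GDfz(-\Delta\psi)$. Then, with $K\coloneqq\supp\psi\subsub D$,
\begin{align*}
\langle \wLo h,\psi\rangle=\int_D h(x)\,\Lo\psi(x)\,dx=\int_D h(x)\int_D \GDfz(x,y)\bigl(-\Delta\psi(y)\bigr)\,dy\,dx=\int_D s(y)\bigl(-\Delta\psi(y)\bigr)\,dy=0,
\end{align*}
where Fubini is justified because $-\Delta\psi$ is bounded with compact support $K$ and $y\mapsto\int_D\GDfz(x,y)h(x)dx=s(y)$ is continuous, so the double integral is bounded by $\|\Delta\psi\|_\infty\int_K s(y)\,dy<\infty$, and the last equality uses $\Delta s=0$ in $D$. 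Thus $\wLo h=0$ in the distributional sense, i.e.\ $h$ is harmonic in $D$.

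The main technical point that must be handled carefully is the uniqueness/injectivity argument in the first direction: one must ensure that both $\PDFI\zeta$ and $\pLoz s$ lie in $\LLL$ so that Proposition \ref{p:wLo GDF=f no.2} can be invoked, and that the appeal to \eqref{eq:Green Poisson identity} with a measure-valued $\zeta$ is Fubini-legitimate. Everything else is a direct combination of the earlier preparations and the classical representation of positive harmonic functions inherent in Theorem \ref{t:harmonic and classical}.
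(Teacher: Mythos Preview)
Your proof is correct and follows essentially the same route as the paper. In the forward direction the paper also reduces to $\GDfz h=\PD\zeta$ via \eqref{eq:Green Poisson identity} and then invokes Theorem \ref{t:harmonic and classical}; you make the implicit injectivity step (that $\GDfz$ is one-to-one on $\LLL$, via Proposition \ref{p:wLo GDF=f no.2} for $\phi^*$) explicit, which is a welcome clarification. In the converse direction the paper simply cites Proposition \ref{p:Poisson kernel harmonic} after noting $\GDfz h$ is classically harmonic, whereas you reprove that proposition's computation inline; both are fine.
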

	\begin{proof}
		Let $h\ge0$ be harmonic in $D$. Theorem \ref{t:PDFI harmonic} implies that we can modify $h$ such that $h=\PDFI\zeta$ in the whole $D$ for some non-negative and finite $\zeta\in \MM(\partial D)$. This also means that $h\in C(D)$ by Theorem \ref{t:PD conti}. Since $\GDfz h=\PD\zeta$ in $D$ by \eqref{eq:Green Poisson identity}, the claim follows from Theorem \ref{t:harmonic and classical} because $P_D\zeta$ is a (smooth) classical harmonic function, hence it satisfies the mean-value property with respect to $W^D$.
		
		Conversely, if $h\ge 0$ satisfies the mean-value property with respect to $X$, then $\GDfz h$ satisfies the mean-value property with respect to $W^D$ by Theorem \ref{t:harmonic and classical}. By the classical theory of harmonic functions, $\GDfz h$ is a classical harmonic function in $D$. Proposition \ref{p:Poisson kernel harmonic} now implies that $h$ is harmonic in $D$.
	\end{proof}
	
	\section{Boundary behaviour of potential integrals}\label{s:boundary behaviour}
	In this section we study the boundary behaviour of Poisson and Green integrals which will serve as a foundation for  the  understanding of the boundary condition of the (semi)linear problem and  the  understanding of the connection between weak and distributional solutions in the next section. However, these problems are also interesting in  themselves.  We emphasize that the essential assumption for all the results in this section is that $D$ is a $C^{1,1}$ bounded domain and the regularity of $\partial D$ is heavily used in every proof in this section.  First we give a sharp bound for $\PDFI \sigma$, 
	\begin{lem}\label{l:Poisson integral sharp bounds}
		It holds that
		\begin{align}\label{eq:Poisson integral sharp bounds}
			\PDfi\sigma(x)\asymp \frac{1}{\de(x)^2\phi(\de(x)^{-2})},\quad x\in D,
		\end{align}
		where the constant of comparability depends only on $d$,  $D$ and $\phi$.
	\end{lem}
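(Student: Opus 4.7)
The plan is to apply the sharp two-sided estimate on the Poisson kernel from Proposition \ref{p:Poisson kernel definition}, namely
$$\PDFI(x,z) \asymp \frac{\de(x)}{|x-z|^{d+2}\phi(|x-z|^{-2})},$$
integrate against the surface measure $\sigma$, and then reduce the resulting surface integral to a one-dimensional radial integral using the $C^{1,1}$ regularity of $\partial D$.

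First I would fix $x \in D$, write $r := \de(x)$, and let $z_x \in \partial D$ be a nearest boundary point, so $|x - z_x| = r$. Because $\partial D$ is $C^{1,1}$, in a neighborhood of $z_x$ the boundary is the graph of a $C^{1,1}$ function over the tangent hyperplane at $z_x$; consequently, for every $z \in \partial D$ one has $|x-z|^{2} \asymp r^{2} + \rho(z)^{2}$, where $\rho(z)$ denotes the (Euclidean or geodesic, equivalently) tangential distance from $z$ to $z_x$, and $\sigma$ is locally comparable to $(d-1)$-dimensional Lebesgue measure on the tangent plane. Since the integrand decays rapidly away from $z_x$, the contribution from the part of $\partial D$ far from $z_x$ is of the same order (or smaller), so passing to polar coordinates centered at $z_x$ yields
\begin{align*}
\PDFI\sigma(x) \asymp r\int_0^{\diam D}\frac{s^{d-2}}{(r^{2}+s^{2})^{(d+2)/2}\,\phi\bigl((r^{2}+s^{2})^{-1}\bigr)}\,ds.
\end{align*}
The substitution $s = ru$ then transforms this to
\begin{align*}
\PDFI\sigma(x) \asymp \frac{1}{r^{2}}\int_0^{\diam D/r}\frac{u^{d-2}}{(1+u^{2})^{(d+2)/2}\,\phi\bigl(r^{-2}(1+u^{2})^{-1}\bigr)}\,du.
\end{align*}

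For the upper bound, I would invoke the simple global scaling \eqref{eq:simple global scaling} with $\lambda = (1+u^{2})^{-1}\in (0,1]$, which gives $\phi\bigl(r^{-2}(1+u^{2})^{-1}\bigr) \geq (1+u^{2})^{-1}\phi(r^{-2})$. The integrand is then dominated by $\frac{u^{d-2}}{(1+u^{2})^{d/2}\phi(r^{-2})}$, whose integral over $(0,\infty)$ is finite for $d\ge 2$ (the integrand is bounded near $0$ and decays like $u^{-2}$ at infinity), yielding $\PDFI\sigma(x) \lesssim \frac{1}{r^{2}\phi(r^{-2})}$. For the lower bound, monotonicity of $\phi$ gives $\phi\bigl(r^{-2}(1+u^{2})^{-1}\bigr) \leq \phi(r^{-2})$, and restricting the $u$-integration to $[0,1]$ (which is allowed since $\diam D/r \ge 1$) produces a strictly positive constant times $\frac{1}{r^{2}\phi(r^{-2})}$.

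The main technical point is the first step: passing from the surface integral over $\partial D$ to the clean radial integral above via the $C^{1,1}$ structure, i.e.\ justifying $|x-z|^{2}\asymp r^{2}+\rho(z)^{2}$ uniformly in $z$ together with the local flattening of $\sigma$. Once this reduction is in place, the remaining estimates are essentially immediate consequences of the scaling behaviour of $\phi$.
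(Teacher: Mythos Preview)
Your proposal is correct and follows essentially the same route as the paper: both start from the sharp Poisson kernel estimate \eqref{eq:Poisson sharp bounds}, use the global scaling \eqref{eq:simple global scaling} for the upper bound, and monotonicity of $\phi$ restricted to a boundary cap of radius comparable to $\de(x)$ for the lower bound. The only organisational difference is that the paper pulls the $\phi$ factor out first via $\de(x)^{2}\phi(\de(x)^{-2})\le |x-z|^{2}\phi(|x-z|^{-2})$ and then appeals directly to $\int_{\partial D}|x-z|^{-d}\sigma(dz)\asymp \de(x)^{-1}$ (and Lemma \ref{apx:l:approx of surface cap} for the cap), whereas you flatten to a radial integral first and apply the $\phi$ bounds afterwards.
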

	
	\begin{proof}
		In Proposition \ref{p:Poisson kernel definition} we have proved that
		$$ \PDfi(x,z)\asymp \frac{\de(x)}{|x-z|^{d+2}\phi(|x-z|^{-2})},\quad x\in D,z\in\partial D,$$
		where the constant of comparability depends only on $d$,  $D$ and $\phi$. Also, in the following calculations, it is easy to check that every comparability constant remains to depend only on $d$, $D$ and $\phi$.
		
		For the upper bound, note that $\de(x)\le|x-z|$, $z\in \partial D$ so by using \eqref{eq:simple global scaling} we have $\de(x)^{2}\phi(\de(x)^{-2})\le |x-z|^2\phi(|x-z|^{-2})$, thus
		\begin{align*}
			\PDfi\sigma(x)&\asymp \int_{\partial D}\frac{\de(x)}{|x-z|^{d+2}\phi(|x-z|^{-2})}\sigma(dz)\le \frac{1}{\de(x)^{2}\phi(\de(x)^{-2})},
		\end{align*}
		since $\int_{\partial D}|x-z|^{-d}\asymp \de(x)^{-1}$, $x\in D$.
		
		For the lower bound fix $x\in D$ and choose $\Gamma=\{z\in\partial D: |x-z|\le 2\de(x)\}$. Recall that $\phi$ is increasing so
		\begin{align*}
			\PDfi\sigma(x)&\asymp \int_{\partial D}\frac{\de(x)}{|x-z|^{d+2}\phi(|x-z|^{-2})}\sigma(dz)\ge \frac{1}{4\de(x)^{2}\phi(\de(x)^{-2})}\de(x)\int_{\Gamma}\frac{\sigma(dz)}{|x-z|^d}\\
			&\asymp \frac{1}{\de(x)^{2}\phi(\de(x)^{-2})},
		\end{align*}
		since $\int_{\Gamma}|x-z|^{-d}\asymp \de(x)^{-1}$, $x\in D$, by reducing to the flat case, see Lemma \ref{apx:l:approx of surface cap}.
	\end{proof}
	\begin{rem}\label{r:relation to classical Poisson}
		
		For the classical Poisson kernel $\PD$, defined in \eqref{eq:Poisson Dirichlet Laplacian}, it is well known that $\PD(x,z)\asymp \frac{\de(x)}{|x-z|^d}$, for $x\in D$ and $z\in \partial D$. Moreover, since $\PD$ is the density of $W_{\tau_D}$, we have $\PD\sigma(x)=\ex_x[\1({W_{\tau_D}})]=1$. In particular, by the sharp bound \eqref{eq:Poisson integral sharp bounds} and by the scaling condition \eqref{eq:scaling condition}, $\PDFI\sigma$ explodes when approaching the boundary of $D$  whereas $\PD\sigma$ obviously does not.
	\end{rem}
	\begin{rem}\label{r:bound PDFIsigma over PDFI}
		In what follows we will need the following inequality
		\begin{align}\label{eq:bound PDFIsigma over PDFI}
			\frac{\PDFI(x,z)}{\PDFI\sigma(x)}\lesssim \frac{\de(x)}{|x-z|^d}, \quad x\in D,
		\end{align}
		which holds by the sharp bounds \eqref{eq:Poisson sharp bounds} and \eqref{eq:Poisson integral sharp bounds},  and since by \eqref{eq:simple global scaling} it holds that $\de(x)^{2}\phi(\de(x)^{-2})\le |x-z|^2\phi(|x-z|^{-2})$, for $x\in D$ and $z\in\partial D$.
	\end{rem}
	
	 The  two following propositions deal with the boundary behaviour of Poisson integrals. They generalize \cite[Proposition 25 \& Theorem  26]{aba+dupaig} to our more general non-local setting.
	
	\begin{prop}\label{p:boundary oper. of Poisson}
		Let $\zeta\in C(\partial D)$. It holds
		\begin{align*}
			\lim_{D\ni x\to z\in\partial D}\frac{\PDfi\zeta(x)}{\PDfi\sigma(x)}=\zeta(z)
		\end{align*}
		uniformly on $\partial D$.
	\end{prop}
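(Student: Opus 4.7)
The plan is to exploit the identity $\PDfi\sigma(x)=\int_{\partial D}\PDfi(x,w)\,\sigma(dw)$ to rewrite the difference
\[
\frac{\PDfi\zeta(x)}{\PDfi\sigma(x)}-\zeta(z)=\int_{\partial D}\frac{\PDfi(x,w)}{\PDfi\sigma(x)}\bigl(\zeta(w)-\zeta(z)\bigr)\,\sigma(dw),
\]
and then to split the integral into a neighborhood of $z$ (where $\zeta(w)-\zeta(z)$ is small by uniform continuity of $\zeta$ on the compact set $\partial D$) and its complement (where $\PDfi(x,w)/\PDfi\sigma(x)$ is small because $x\to z$ while $w$ stays away from $z$).

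More concretely, fix $\varepsilon>0$ and choose, using uniform continuity of $\zeta$ on $\partial D$, some $\eta>0$ so that $|\zeta(w)-\zeta(z)|<\varepsilon$ whenever $z,w\in\partial D$ with $|w-z|<\eta$; this $\eta$ can be chosen independently of $z\in\partial D$. On the near piece $\partial D\cap B(z,\eta)$ I bound the integrand by $\varepsilon\,\PDfi(x,w)/\PDfi\sigma(x)$, and since $\int_{\partial D}\PDfi(x,w)/\PDfi\sigma(x)\,\sigma(dw)=1$, this piece contributes at most $\varepsilon$ uniformly in $z$ and $x$. On the far piece $\partial D\cap B(z,\eta)^c$, I invoke the bound \eqref{eq:bound PDFIsigma over PDFI} from Remark \ref{r:bound PDFIsigma over PDFI}: for $x\in D$ sufficiently close to $z$ (say $|x-z|<\eta/2$) and $w$ with $|w-z|\ge\eta$, the triangle inequality gives $|x-w|\ge\eta/2$, so
\[
\frac{\PDfi(x,w)}{\PDfi\sigma(x)}\lesssim\frac{\de(x)}{|x-w|^{d}}\le\frac{c\,\de(x)}{\eta^{d}}.
\]
Integrating against $|\zeta(w)-\zeta(z)|\le 2\|\zeta\|_{L^\infty(\partial D)}$ over $\partial D$ and using $\sigma(\partial D)<\infty$, the far piece is bounded by $c\,\|\zeta\|_{L^\infty(\partial D)}\sigma(\partial D)\de(x)/\eta^{d}$, which tends to $0$ as $x\to z$, uniformly in $z\in\partial D$ (since $\de(x)\le|x-z|$).

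Combining the two estimates, one gets that for every $\varepsilon>0$ there exists $r>0$ (independent of $z$) such that for all $z\in\partial D$ and $x\in D$ with $|x-z|<r$ the difference $|\PDfi\zeta(x)/\PDfi\sigma(x)-\zeta(z)|<2\varepsilon$, which is exactly the uniform convergence claimed. The only slightly delicate point is making sure the near/far split is uniform in $z\in\partial D$; this is handled by the compactness of $\partial D$ (uniform continuity of $\zeta$ and a single choice of $\eta$), together with the fact that the constant in \eqref{eq:bound PDFIsigma over PDFI} is independent of $z$. No other serious obstacle arises, since Proposition \ref{p:Poisson kernel definition} and Lemma \ref{l:Poisson integral sharp bounds} already provide all the pointwise comparisons needed.
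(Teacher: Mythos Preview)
Your proof is correct and follows essentially the same approach as the paper's: both split the integral into a near piece $\partial D\cap B(z,\eta)$ and a far piece $\partial D\cap B(z,\eta)^c$, use uniform continuity of $\zeta$ on the near piece, and invoke the bound \eqref{eq:bound PDFIsigma over PDFI} together with $|x-w|\ge\eta/2$ on the far piece. The only cosmetic difference is that on the near piece you use the exact identity $\int_{\partial D}\PDfi(x,w)/\PDfi\sigma(x)\,\sigma(dw)=1$ to get the bound $\varepsilon$, whereas the paper applies \eqref{eq:bound PDFIsigma over PDFI} there as well and then uses $\de(x)\int_{\partial D}|x-y|^{-d}\sigma(dy)\asymp 1$, picking up a harmless constant $c_2$.
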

	\begin{proof}
		Note that $\zeta$ is uniformly continuous since $D$ is bounded and let $M=2\sup_{z\in\partial D} |\zeta(z)|$. For $\varepsilon>0$ choose $\eta>0$ such that if $y,z\in\partial D$ and $|y-z|<\eta$, then $|\zeta(y)-\zeta(z)|\le \varepsilon$. For $z\in\partial D$ let $\Gamma_z=\{y\in\partial D:|y-z|<\eta\}$. Now if $|x-z|\le \frac\eta2$, then by using \eqref{eq:bound PDFIsigma over PDFI} we have
		\begin{align*}
			\left|\frac{\PDfi\zeta(x)}{\PDfi\sigma(x)}-\zeta(z)\right|&\le\frac{1}{\PDfi\sigma(x)}\int_{\partial D}\PDfi(x,y)\left|\zeta(y)-\zeta(z)\right|\sigma(dy)\\
			&\le c_1\de(x)\int_{\Gamma_z}\frac{\left|\zeta(y)-\zeta(z)\right|}{|x-y|^d}\sigma(dy)+c_1\de(x)\int_{\partial D\setminus \Gamma_z}\frac{\left|\zeta(y)-\zeta(z)\right|}{|x-y|^d}\sigma(dy)\\
			&\le c_2\varepsilon+c_1\de(x)M\sigma(\partial D)\left(\frac{\eta}{2}\right)^{-d},
		\end{align*}
		where in the last inequality for the first term we used $\de(x)\asymp \int_{\partial D}|x-y|^{-d}\sigma(dy)$, hence $c_2=c_2(d,D,\phi)>0$. Now the claim follows by taking $x$ close enough to $z$.
	\end{proof}
	
	\begin{prop}\label{p:boundary oper. Poisson L1}
		For any $\zeta\in L^1(\partial D)$
		and any $\varphi\in C(\overline\Omega)$ it holds that
		\begin{align*}
			\frac1t\int_{\{\de(x)\le t\}}
			\frac{\PDfi\zeta(x)}{\PDfi\sigma(x)}\varphi(x)dx
			\xrightarrow{t\downarrow 0} \int_{\partial D}\varphi(y)\zeta(y)d\sigma(y).
		\end{align*}
		
	\end{prop}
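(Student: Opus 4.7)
The plan is to establish the result first for continuous boundary data, where the pointwise information of Proposition \ref{p:boundary oper. of Poisson} is already available, and then extend to $L^1(\partial D)$ by a density argument supported by a uniform bound on the operator
\begin{equation*}
\zeta\mapsto \frac{1}{t}\int_{\{\de(x)\le t\}}\frac{\PDFI\zeta(x)}{\PDFI\sigma(x)}\varphi(x)\,dx.
\end{equation*}

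\textbf{Step 1 (Continuous case).} Suppose first that $\zeta\in C(\partial D)$. By Proposition \ref{p:boundary oper. of Poisson}, the function $F\coloneqq \PDFI\zeta/\PDFI\sigma$ extends continuously to $\overline{D}$ with $F|_{\partial D}=\zeta$. Since $D$ is $C^{1,1}$, there exists $t_0>0$ such that the map $(y,\rho)\mapsto x=y-\rho\mathbf{n}(y)$ is a bi-Lipschitz diffeomorphism from $\partial D\times (0,t_0)$ onto the tubular neighborhood $\{0<\de(x)<t_0\}$, with Jacobian $J(y,\rho)\to 1$ uniformly as $\rho\downarrow 0$. Writing the integral in these coordinates and applying the dominated convergence theorem to the inner integral gives
\begin{equation*}
\frac{1}{t}\int_{\{\de(x)\le t\}}F(x)\varphi(x)\,dx=\frac{1}{t}\int_0^t\!\!\int_{\partial D}F(y-\rho\mathbf{n}(y))\varphi(y-\rho\mathbf{n}(y))J(y,\rho)\,d\sigma(y)\,d\rho\xrightarrow{t\downarrow 0}\int_{\partial D}\zeta(y)\varphi(y)\,d\sigma(y).
\end{equation*}

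\textbf{Step 2 (Uniform bound).} The decisive estimate is
\begin{equation*}
\sup_{t\in(0,t_0),\,z\in\partial D}\frac{1}{t}\int_{\{\de(x)\le t\}}\frac{\de(x)}{|x-z|^{d}}\,dx \le C.
\end{equation*}
To prove it, again parameterize by $x=y-\rho\mathbf{n}(y)$ and split $\partial D$ into $\{|y-z|\le 2t\}$ and its complement. On the complement one has $|x-z|\asymp |y-z|$ and a direct computation gives a bound of order $t\cdot\int_{2t}^{\diam D}r^{-2}dr\lesssim 1$. On the near part one uses $|x-z|^2\asymp|y-z|^2+\rho^2$ and the substitution $s=|y-z|/\rho$ to reduce the integral to $\int_0^\infty s^{d-2}(s^2+1)^{-d/2}\,ds<\infty$, which is uniform in $\rho$. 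Combined with \eqref{eq:bound PDFIsigma over PDFI} and Fubini's theorem, this estimate yields
\begin{equation*}
\frac{1}{t}\int_{\{\de(x)\le t\}}\frac{\PDFI|\zeta|(x)}{\PDFI\sigma(x)}|\varphi(x)|\,dx\le C\,\|\varphi\|_{L^\infty(D)}\,\|\zeta\|_{L^1(\partial D)},
\end{equation*}
uniformly in $t\in(0,t_0)$.

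\textbf{Step 3 (Density argument).} Given $\zeta\in L^1(\partial D)$ and $\varepsilon>0$, choose $\zeta_\varepsilon\in C(\partial D)$ with $\|\zeta-\zeta_\varepsilon\|_{L^1(\partial D)}<\varepsilon$. Splitting $\zeta=\zeta_\varepsilon+(\zeta-\zeta_\varepsilon)$, Step 1 handles the continuous piece and Step 2 bounds the error by $C\|\varphi\|_{L^\infty(D)}\varepsilon$ uniformly in $t$, while on the right-hand side $|\int_{\partial D}\varphi(\zeta-\zeta_\varepsilon)d\sigma|\le \|\varphi\|_{L^\infty}\varepsilon$. Letting $\varepsilon\downarrow 0$ after $t\downarrow 0$ closes the argument.

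The main obstacle is Step 2: everything hinges on establishing the geometric integral estimate uniformly in both $t$ and $z\in\partial D$, which is precisely where the $C^{1,1}$ regularity of $\partial D$ and the flat-chart reduction (as in Lemma \ref{apx:l:approx of surface cap}) are essential.
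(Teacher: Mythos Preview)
Your proof is correct and hinges on the same key ingredient as the paper's: the uniform bound $\sup_{t,z}\frac{1}{t}\int_{\{\de(x)\le t\}}\frac{\de(x)}{|x-z|^d}\,dx\le C$, which the paper obtains by reduction to the flat case (citing \cite[Lemma~40]{aba+dupaig}). The only organizational difference is that the paper, following \cite[Theorem~26]{aba+dupaig}, applies Fubini to write the left-hand side as $\int_{\partial D}\Phi(t,y)\zeta(y)\,d\sigma(y)$ with $\Phi(t,y)=\frac{1}{t}\int_{\{\de(x)<t\}}\frac{\PDFI(x,y)}{\PDFI\sigma(x)}\varphi(x)\,dx$, and then uses the uniform bound on $\Phi$ together with its pointwise convergence to $\varphi(y)$ to apply dominated convergence directly for $\zeta\in L^1(\partial D)$; you instead establish the continuous case first via Proposition~\ref{p:boundary oper. of Poisson} and then extend by density. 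These are equivalent packagings of the same argument.
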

	\begin{proof}
		We can repeat the proof of \cite[Theorem 26]{aba+dupaig} almost to the letter. Indeed, take $\varphi\in C(\overline D)$ and note the $h_1$ of \cite{aba+dupaig} is our $\PDFI\sigma$, and $\phi$ of \cite{aba+dupaig} is our $\varphi$. We repeat the proof up to the definition of
		\begin{align*}
			\Phi(t,y):=\frac1t\int_{\{\de(x)<t\}}\frac{\PDFI(x,y)}{\PDFI\sigma(x)}\,\varphi(x)\:dx.
		\end{align*}
		Now we use Remark \ref{r:bound PDFIsigma over PDFI} and the boundedness of $\varphi$ to obtain
		\begin{align*}
			|\Phi(t,y)|\le c_1
			\frac{\|\varphi\|_{L^\infty(D)}}{t}
			\int_{\{\de(x)<t\}}\frac{\de(x)}{|x-y|^d}dx\le c_2,
		\end{align*}
		by the reduction to the flat boundary, see \cite[Lemma 40]{aba+dupaig}, where $c_2=c_2(\phi,D,d,\varphi)>0$. The rest of the proof is now the same as in \cite{aba+dupaig}.
	\end{proof}
	
	Now we turn to the boundary behaviour of Green integrals. Here the pointwise limits are harder to get and we must assume some kind of uniformity of the integrating function.
	\begin{thm}\label{t:boundary of GDFIU}
		Let $U:(0,\infty)\to[0,\infty)$ such that 
		\begin{enumerate}[label=(\textbf{U\arabic*})]
			\item \label{U1} integrability condition holds 
			\begin{equation}\label{eq:int-cond}
				\int_{0}^1U(t)t\, dt <\infty;
			\end{equation}
			
			\item \label{U2}  almost non-increasing condition holds, i.e. there exists  $C>0$  such that
			\begin{equation}\label{eq:wus-U}
				U(t)\le C U(s), \quad 0<s\le t\le 1;
			\end{equation}
			
			\item \label{U3}reverse doubling condition holds, i.e. there exists $C>0$   such that
			\begin{equation}\label{eq:doubling-U}
				U(t)\le C U(2t),\quad t\in (0,1); 
			\end{equation}
			\item \label{U4} boundedness away from zero holds, i.e. $U$ is bounded from above on $[c, \infty)$ for each $c >0$.
		\end{enumerate}
		Then $U(\de)\in\LLL$ and
		\begin{align}\label{eq:GDFI U sharp bound}		\GDFI\big(U(\de)\big)(x)&\asymp \frac{1}{\de(x)^2\phi(\de(x)^{-2})}\int\limits_0^{\de(x)}U(t)t\, dt+ \de(x)+\de(x)\int\limits_{\de(x)}^{\diam D} \frac{U(t)}{t^2\phi(t^{-2})}\, dt\,.
		\end{align}
		In particular,
		\begin{align}\label{eq:GD << PD}
			\lim_{D\ni x\to z\in\partial D}\frac{\GDFI [U(\de)](x)}{\PDFI\sigma(x)}=0.
		\end{align}
	\end{thm}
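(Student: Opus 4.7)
The proof breaks into three steps matching the three claims: (a) the integrability $U(\de)\in\LLL$; (b) the two-sided estimate \eqref{eq:GDFI U sharp bound}; and (c) the vanishing limit \eqref{eq:GD << PD}.

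For (a), since $D$ is $C^{1,1}$, local flatness of $\partial D$ together with the coarea formula yields $\mathcal{H}^{d-1}(\{\de=t\})\lesssim 1$ uniformly in $t\in(0,\diam D)$, hence
\[
\int_D U(\de(x))\,\de(x)\,dx\;\lesssim\;\int_0^{\diam D} U(t)\,t\,dt,
\]
which is finite by \ref{U1} on $(0,1)$ and by \ref{U4} on $[1,\diam D]$.

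For (b), I would fix $x\in D$, set $r=\de(x)$, and apply the Green function estimate \eqref{eq:Green function sharp estimate} on three regions into which $D$ naturally decomposes. First, on $B(x,r/2)$ one has $\de(y)\asymp r$, the minimum in \eqref{eq:Green function sharp estimate} equals $1$, and $U(\de(y))\asymp U(r)$ by \ref{U2}--\ref{U3}; radial integration yields $U(r)/\phi(r^{-2})$, which by the averaging equivalence $U(r)r^2\asymp\int_0^r U(t)\,t\,dt$ (a consequence of \ref{U2}--\ref{U3}) matches the first term on the right-hand side of \eqref{eq:GDFI U sharp bound}. Second, on $\{y\in D:|x-y|\ge r/2,\;\de(y)\ge r\}$ one uses the product form $\de(x)\de(y)|x-y|^{-d-2}\phi(|x-y|^{-2})^{-1}$; direct polar integration, separating $|x-y|$ of order up to $r$ from the range $|x-y|\in(r,\diam D)$, produces contributions comparable to the $\de(x)$-term and the $\de(x)\int_{\de(x)}^{\diam D}U(t)/(t^2\phi(t^{-2}))\,dt$-term. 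Third, the narrow boundary band $\{y:\de(y)<r\}$ (contained in $\{|x-y|\ge r/2\}$) is parametrized by the nearest-boundary point and depth, and handled by the flat-boundary reduction used for Lemma \ref{apx:l:approx of surface cap}; it redistributes into the same three terms. The lower bound is obtained by fixing, for each of the three terms, a test subregion on which $\GDFI$ attains its lower order of magnitude and $U$ can be bounded from below by \ref{U2}--\ref{U3}. The detailed calculation, closely patterned after \cite[Proposition~4.1]{semilinear_bvw}, is the main obstacle and is carried out in the Appendix.

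For (c), Lemma \ref{l:Poisson integral sharp bounds} gives $\PDfi\sigma(x)\asymp 1/(r^2\phi(r^{-2}))$. The first term of \eqref{eq:GDFI U sharp bound} divided by $\PDfi\sigma(x)$ becomes $\int_0^r U(t)\,t\,dt\to 0$ by \ref{U1}. The second becomes $r^3\phi(r^{-2})\lesssim r^{3-2\delta_2}\to 0$ by the upper bound in \eqref{eq:scaling condition}. For the third, $r^3\phi(r^{-2})\int_r^{\diam D}U(t)/(t^2\phi(t^{-2}))\,dt$, I split at $t=1$: the piece over $[1,\diam D]$ is bounded by \ref{U4} and a lower bound on $\phi(t^{-2})$, times $r^3\phi(r^{-2})\to 0$ as before; for $t\in[r,1]$, the upper scaling $\phi(r^{-2})/\phi(t^{-2})\le a_2(t/r)^{2\delta_2}$ gives
\[
r^{3-2\delta_2}\int_r^1 U(t)\,t^{2\delta_2-2}\,dt=\int_r^1 U(t)\,t\,(r/t)^{3-2\delta_2}\,dt,
\]
which tends to $0$ as $r\downarrow 0$ by dominated convergence, with dominating integrable function $U(t)\,t$ from \ref{U1} and pointwise limit $0$. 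Combining the three vanishing terms gives \eqref{eq:GD << PD}.
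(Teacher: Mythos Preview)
Your approach is essentially the same as the paper's: the sharp estimate is obtained by decomposing $D$ according to the relative sizes of $\de(y)$ and $|x-y|$, using the Green bounds \eqref{eq:Green function sharp estimate}, and reducing to the flat boundary; the limit \eqref{eq:GD << PD} then follows from Lemma \ref{l:Poisson integral sharp bounds}. The paper carries out the decomposition in the Appendix with five regions (Lemma \ref{apx:l:GDFI close-to-bdry}) rather than your three, but the spirit is identical.

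There is, however, one genuine error in your sketch of (b). The ``averaging equivalence'' $U(r)r^2\asymp\int_0^r U(t)\,t\,dt$ is \emph{not} a consequence of \ref{U2}--\ref{U3}: only the inequality $U(r)r^2\lesssim\int_0^r U(t)\,t\,dt$ holds. For a counterexample, take $U(t)=t^{-2}|\log t|^{-2}$ near $0$; this satisfies \ref{U1}--\ref{U4}, yet $\int_0^r U(t)t\,dt=|\log r|^{-1}$ while $U(r)r^2=|\log r|^{-2}$, so the ratio blows up. Consequently, the contribution of your region $B(x,r/2)$, which is $\asymp U(r)/\phi(r^{-2})$, gives only an upper bound by the first term of \eqref{eq:GDFI U sharp bound}, not a two-sided match. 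In the paper's argument this region ($D_1$) is likewise absorbed into the first term from above; the \emph{lower} bound for the first term comes instead from the thin boundary band $\{\de(y)<\de(x)/2\}$ (the paper's $D_3$, your region~3), where the coarea computation produces exactly $\frac{1}{\de(x)^2\phi(\de(x)^{-2})}\int_0^{\de(x)}U(t)t\,dt$ with the correct two-sided comparison. So your outline survives, but the attribution of the first term must be shifted from the ball to the boundary band.

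For (c), your argument via the weak scaling bound $\phi(r^{-2})/\phi(t^{-2})\le a_2(t/r)^{2\delta_2}$ is correct. The paper handles the third term more simply: since $t\mapsto t^2\phi(t^{-2})$ is increasing (by \eqref{eq:simple global scaling}), one gets directly $r^3\phi(r^{-2})\int_r^{\diam D}U(t)/(t^2\phi(t^{-2}))\,dt\le\int_r^{\diam D}U(t)\,r\,dt$, and dominated convergence with majorant $U(t)t$ finishes.
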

	This theorem generalizes \cite[Proposition 7]{dhifli2012} to more general non-local operators and more general functions since in \cite{dhifli2012} this result was proved in the case of the spectral fractional Laplacian and for functions of the form $U(t)=t^\beta$.
	\begin{proof}[\textbf{Proof of Theorem \ref{t:boundary of GDFIU}}]
		The proof of this claim is very similar to the proof of \cite[Proposition 4.1]{semilinear_bvw} so the details can be found in  the  Appendix in Section \ref{apx:proof of boundary GDFI}.

	\end{proof}
	
	The following proposition appears as \cite[Theorem 27]{aba+dupaig} for the case of the spectral fractional Laplacian but in our more general setting the proof gets a little more complicated, cf. \cite[Eq. (46)]{aba+dupaig} and \eqref{eq:boundary op. GDFI eq2}.
	\begin{prop}\label{p:boundary operator on GDFi}
		Let $\lambda\in\MM(D)$ such that $\int_D\de(x)|\lambda|(dx)<\infty$. Then
		\begin{align}\label{eq:boundary operator on GDFi}
			\frac1t \int_{\{\de(x)\le t\}}\frac{\GDFI\lambda(x)}{\PDFI\sigma(x)}\varphi(x)dx\xrightarrow{t\downarrow 0}0,\quad \varphi\in C(\overline{D}).
		\end{align}
	\end{prop}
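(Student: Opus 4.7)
The strategy is to apply Fubini's theorem and then the dominated convergence theorem. Writing
$$A(t) := \frac{1}{t}\int_{\{\de(x) \le t\}} \frac{\GDFI\lambda(x)}{\PDFI\sigma(x)}\varphi(x)\,dx,$$
Fubini's theorem (whose validity will follow from the uniform bound derived below) gives $A(t) = \int_D \Psi(t,y)\,\lambda(dy)$, where
$$\Psi(t,y) := \frac{1}{t}\int_{\{\de(x) \le t\}} \frac{\GDFI(x,y)\varphi(x)}{\PDFI\sigma(x)}\,dx.$$

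For the pointwise convergence $\Psi(t,y) \to 0$ as $t \to 0$, fix $y \in D$ and consider $t \le \de(y)/2$. Then the strip $\{\de(x) \le t\}$ lies at distance $\ge \de(y)/2$ from $y$, and combining the sharp bounds \eqref{eq:Green function sharp estimate} and \eqref{eq:Poisson integral sharp bounds} with a tubular parametrization of $\{\de(x) \le t\}$ (valid since $D$ is $C^{1,1}$), along with the computation $\int_{\partial D} |z-y|^{-(d+2)}\phi(|z-y|^{-2})^{-1}\sigma(dz) \asymp 1/(\de(y)^3\phi(\de(y)^{-2}))$ used in Lemma \ref{l:Poisson integral sharp bounds}, I would obtain
$$|\Psi(t,y)| \lesssim \|\varphi\|_\infty\, \frac{t^3\phi(t^{-2})}{\de(y)^2\phi(\de(y)^{-2})},$$
which vanishes as $t\to 0$ since $t^2\phi(t^{-2})$ is bounded by weak scaling \eqref{eq:scaling condition}.

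The crux of the proof is the uniform dominating bound $|\Psi(t,y)| \le C\|\varphi\|_\infty\de(y)$ for all $y \in D$ and small $t$. In the ``far'' regime $\de(y) \ge 2t$, this follows from the pointwise bound above by applying \eqref{eq:scaling condition} to the ratio $\phi(t^{-2})/\phi(\de(y)^{-2})$, which gives an extra factor $(t/\de(y))^{1+2(1-\delta_2)}\le 1$. The ``near'' regime $\de(y) < 2t$ is the main obstacle: here $y$ lies in the strip and $|x-y|$ cannot be bounded below a priori. I would split the $x$-integration according to whether $\de(x)\le |x-y|$ or $\de(x)>|x-y|$, and according to which branch of the minimum in \eqref{eq:Green function sharp estimate} is active. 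The sublinearity $\phi(a)/\phi(b)\le a/b$ for $a\ge b$, which follows from \eqref{eq:simple global scaling}, then yields in each subregion a clean bound for $\GDFI(x,y)/\PDFI\sigma(x)$: either $\lesssim \de(x)\de(y)/|x-y|^d$ (when $\de(x)\le |x-y|$, in analogy with Remark \ref{r:bound PDFIsigma over PDFI}) or $\lesssim \de(y)/|x-y|^{d-1}$ (when $\de(x)>|x-y|$, using that then $\de(x)\asymp|x-y|$). The estimates are then finished by polar coordinates around $y$, with the geometric observation that the spherical measure of $\{\omega \in S^{d-1}: \de(y + r\omega) > r\}$ is of order $(\de(y)/r)^{(d-1)/2}$ by flat-boundary approximation. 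Summing the contributions from the subregions produces $|\Psi(t,y)|\le C\|\varphi\|_\infty\de(y)$.

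With the uniform bound in hand, the dominating function $y \mapsto C\|\varphi\|_\infty\de(y)$ belongs to $L^1(D,|\lambda|(dy))$ by the hypothesis $\int_D \de(y)|\lambda|(dy)<\infty$, and pointwise convergence together with DCT yield $A(t) = \int_D \Psi(t,y)\lambda(dy) \to 0$. The hardest step is the uniform estimate in the near regime, which requires the same type of delicate split-and-integrate argument as in the proof of Theorem \ref{t:boundary of GDFIU}, to which the paper itself points by referring to its equation \eqref{eq:boundary op. GDFI eq2}.
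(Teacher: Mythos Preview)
Your overall strategy---apply Fubini to write $A(t)=\int_D\Psi(t,y)\,\lambda(dy)$, establish the uniform bound $|\Psi(t,y)|\lesssim\de(y)$ together with pointwise convergence $\Psi(t,y)\to 0$, then invoke dominated convergence---is exactly what the paper does, and your far-regime estimate ($\de(y)\ge 2t$) is correct.

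There is, however, a genuine gap in your near-regime sketch. The claim that $\de(x)\asymp|x-y|$ whenever $\de(x)>|x-y|$, and hence $\GDFI(x,y)/\PDFI\sigma(x)\lesssim\de(y)/|x-y|^{d-1}$, holds only in the sub-case $\de(y)\le|x-y|<\de(x)$: there the Lipschitz inequality $|\de(x)-\de(y)|\le|x-y|$ indeed forces $\de(x)\le 2|x-y|$. In the remaining sub-case $|x-y|<\min\{\de(x),\de(y)\}$ one has $\de(x)\asymp\de(y)$ but certainly not $\de(x)\asymp|x-y|$, and the sharp pointwise bound is only $\de(x)^2\phi(\de(x)^{-2})\big/\big(|x-y|^d\phi(|x-y|^{-2})\big)$, which is not dominated by $\de(y)/|x-y|^{d-1}$. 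The contribution of this sub-region is still under control---it is essentially the ball $B(y,\de(y))$, and retaining the $\phi$-dependence and arguing as in \eqref{eq:cont GDf eq1} yields a bound $\lesssim\de(y)^2\le 2\de(y)t$---but your sublinearity reduction discards precisely the information needed to see this, and your spherical-cap argument does not apply to this piece.

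The paper sidesteps the difficulty by quoting two appendix lemmas that carry out the decomposition for you. For $\de(y)\ge t/2$ it invokes Lemma \ref{apx:l:boundary operator of GDFI 2}, which splits the strip into four pieces $D_1,\dots,D_4$ according to the distance to $y$ and keeps the full $\phi$-dependence throughout; the region you mishandle corresponds to their $D_2=\{t/4\le\de<t\}\cap B(y,t/4)$, whose contribution they show is $\lesssim t^2$. For $\de(y)<t/2$ the paper uses the estimates behind Lemma \ref{apx:l:away-from-bdry} (with $U\equiv 1\gtrsim 1/\PDFI\sigma$) to obtain the bound $Ct\de(y)$ directly, rather than your polar-coordinate approach.
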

	\begin{proof}
		Without loss of generality we may assume that $\lambda$ is a non-negative measure. It is enough to prove that \eqref{eq:boundary operator on GDFi} holds for $\varphi\equiv1$. By using Fubini's theorem it follows that
		\begin{align}\label{eq:boundary op. GDFI eq1}
			\frac1t \int_{\{\de(x)\le t\}}\frac{\GDFI\lambda(x)}{\PDFI\sigma(x)}dx=\int_D\left(\frac1t \int_{\{\de(x)\le t\}}\frac{\GDFI(x,y)}{\PDFI\sigma(x)}dx\right)\lambda(dy).
		\end{align}
		Lemma \ref{apx:l:away-from-bdry} for $U\equiv 1\gtrsim 1/\PDFI\sigma$ and Lemma \ref{apx:l:boundary operator of GDFI 2} imply that there is  $C=C(d,D,\phi)>0$ such that
		\begin{align}\label{eq:boundary op. GDFI eq2}
			\int_{\{\de(x)\le t\}}\frac{\GDFI(x,y)}{\PDFI\sigma(x)}dx\le \begin{cases}
				C t\de(y),&\de(y)< \frac{t}{2},\\
				C\,\wt f(y,t),&\de(y)\ge\frac{t}{2},
			\end{cases}
		\end{align}
		where $0\le \wt f(y,t)\le t\,\de(y)$ in $\{\de(y)\ge \frac t2\}$ and $f(y,t)/t\to0$ as $t\to 0$ for every $y\in D$.
		Hence, \eqref{eq:boundary op. GDFI eq1} and \eqref{eq:boundary op. GDFI eq2} imply
		\begin{align*}
			\frac1t \int_{\{\de(x)\le t\}}\frac{\GDFI\lambda(x)}{\PDFI\sigma(x)}dx\le C\int_{\{\de(y)< \frac{t}{2}\}}\de(y)\lambda(dy)+C\int_{\{\de(y)\ge \frac{t}{2}\}}\frac{\wt f(y,t)}{t}\lambda(dy)
		\end{align*}
		from which the claim of the lemma follows by using the dominated convergence theorem.
	\end{proof}
	
	\section{Linear Dirichlet problem}\label{s:linear problem}
	
	In this section we deal with a linear Dirichlet problem for $\Lo$ and develop some basic properties of a weak solution to the problem. At the end of the section, we connect the weak formulation of the problem with the distributional.
	\begin{defn}\label{d:problem definition}
		Let $\lambda\in \MM(D)$ and $\zeta\in \MM(\partial D)$ such that  
		\begin{align}\label{eq:condition on rate of solution}
			\int_D\de(x)|\lambda|(dx)+|\zeta|(\partial D)<\infty.
		\end{align}
		We say that $u\in L^1_{loc}(D)$ is a weak solution to the problem
		\begin{align}\label{linear problem}
			\begin{cases}
				\Lo u=\lambda,&\textrm{in $D$},\\
				\frac{u}{\PDFI\sigma}=\zeta,&\textrm{on $\partial D$},
			\end{cases}
		\end{align}
		if for every $\psi\in C_c^\infty(D)$ it holds that
		\begin{align}\label{eq:problem - integral definition}
			\int_D u(x)\psi(x)dx=\int_D\GDfi\psi(x)\lambda(dx)-\int_{\partial D}\frac{\partial}{\partial \mathbf{n}}\GDfi\psi(z)\zeta(dz).
		\end{align}
		If in \eqref{eq:problem - integral definition} we have $\le$ ($\ge$) instead of the equality and the inequality holds for every non-negative $\psi\in C_c^\infty(D)$, then we say $u$ is a weak subsolution (supersolution) to the problem \eqref{linear problem}.
	\end{defn}
	
	\begin{rem}
		\begin{enumerate}[label=(\alph*)]
			\item
			Let $\psi\in C_c^\infty(D)$. From the calculations in the proof of Proposition \ref{p:Poisson kernel definition}, see  also \eqref{eq:Poisson kernel definition} and \eqref{eq:Poisson sharp bounds}, it follows that $\frac{\partial}{\partial \mathbf{n}}\GDfi\psi(z)$ is well defined and $$-\frac{\partial}{\partial \mathbf{n}}\GDfi\psi(z)=\int_D\PDFI(y,z)\psi(y)dy, \quad z \in \partial D,$$
			holds, hence $\frac{\partial}{\partial \mathbf{n}}\GDfi\psi\in L^\infty(\partial D)$. Moreover, Lemma \ref{l:rate of GDdelta} implies that $|\GDfi\psi(x)|\lesssim \de(x)$, thus the condition \eqref{eq:condition on rate of solution} ensures that the integrals in \eqref{eq:problem - integral definition} are well defined.
			\item
			If $u$ is a solution to the linear problem \eqref{linear problem}, then by using Fubini's theorem in \eqref{eq:problem - integral definition} we get that
			\begin{align}\label{eq:solution linear problem}
				u=\GDfi\lambda+\PDFI\zeta,\quad \textrm{a.e. in $D$.}
			\end{align}
			This implies that $u\in \LLL$. Indeed, $\GDFI\lambda\in\LLL$ by Lemma \ref{l:rate of GDdelta}, and  $\PDFI\zeta\in \LLL$ by \eqref{eq:harmonic L1 bound}.
			
			Conversely, the function defined in \eqref{eq:solution linear problem} is the solution of linear problem \eqref{linear problem} which we also get by using Fubini's theorem in \eqref{eq:problem - integral definition}.
		\end{enumerate}
	\end{rem}
	The following theorem summarizes the previous remark.
	\begin{thm}\label{t:linear problem}
		Let $\lambda\in \MM(D)$ and $\zeta\in \MM(\partial D)$ such that  
		\eqref{eq:condition on rate of solution} holds. Then the linear problem \eqref{linear problem} has a unique weak solution $u$ for which it holds that $u\in\LLL$ and
		\begin{align*}
			u(x)=\GDFI\lambda(x)+\PDFI\zeta(x),\quad\text{for a.e. $x\in D$.}
		\end{align*}
		Furthermore, there is $C=C(d,D,\phi)>0$ such that
		\begin{align}\label{eq:linear solution norm boundedness}
			\|u\|_{\LLL}\le C\left(\int_D\de(x)|\lambda|(dx)+|\zeta|(\partial D)\right).
		\end{align}
	\end{thm}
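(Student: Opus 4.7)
The plan is to treat the three claims of the theorem—existence, uniqueness, and the $L^1$ norm bound—separately. The remark immediately preceding the statement already sketches the existence and uniqueness arguments; my job is essentially to verify Fubini's theorem carefully and to assemble the norm bound from results established earlier.

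For existence, I would plug $u := \GDFI\lambda + \PDFI\zeta$ into the right-hand side of \eqref{eq:problem - integral definition}. Fix $\psi \in C_c^\infty(D)$. Writing $\GDFI\psi(x) = \int_D \GDFI(x,y)\psi(y)\,dy$ by \eqref{eq:defn of GDf} and $-\tfrac{\partial}{\partial\mathbf{n}}\GDFI\psi(z) = \int_D \PDFI(y,z)\psi(y)\,dy$ (the latter extracted from the proof of Proposition \ref{p:Poisson kernel definition}), Fubini's theorem formally yields
\begin{equation*}
\int_D\GDfi\psi(x)\lambda(dx)-\int_{\partial D}\frac{\partial}{\partial \mathbf{n}}\GDfi\psi(z)\zeta(dz) = \int_D \psi(y)\bigl(\GDFI\lambda(y)+\PDFI\zeta(y)\bigr)\,dy,
\end{equation*}
which is precisely the weak identity for $u$. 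To justify Fubini for the volume integral, I would use that $\psi$ has compact support $K\subset D$ to dominate $|\psi(y)| \le C\,\de(y)$ with $C = \|\psi\|_\infty/\inf_K\de$, and then invoke Lemma \ref{l:rate of GDdelta} to bound $\int_D \GDFI(x,y)|\psi(y)|\,dy \lesssim \de(x)$, which is integrable against $|\lambda|$ by the hypothesis \eqref{eq:condition on rate of solution}. For the boundary integral, the sharp bound \eqref{eq:Poisson sharp bounds} together with $|y-z|$ being bounded below on $K\times\partial D$ gives that $\PDFI(y,z)|\psi(y)|$ is uniformly bounded there, so the inner integral is bounded in $z$, and integrability against $|\zeta|$ follows from $|\zeta|(\partial D)<\infty$.

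Uniqueness reduces to the fundamental lemma of the calculus of variations: if $u_1,u_2\in L^1_{loc}(D)$ are two weak solutions, subtracting the identities \eqref{eq:problem - integral definition} gives $\int_D (u_1-u_2)\psi\,dx = 0$ for every $\psi \in C_c^\infty(D)$, forcing $u_1=u_2$ a.e. For the norm bound, I would decompose $\lambda$ and $\zeta$ into their Jordan parts and combine Lemma \ref{l:rate of GDdelta} (which gives $\|\GDFI\lambda\|_{\LLL} \le C\int_D \de(x)|\lambda|(dx)$) with \eqref{eq:harmonic L1 bound} (which gives $\|\PDFI\zeta\|_{\LLL} \le C|\zeta|(\partial D)$) via the triangle inequality to obtain \eqref{eq:linear solution norm boundedness}.

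I do not anticipate a real obstacle in any of these steps; the only slightly technical ingredient is the Fubini verification, and it has already been essentially reduced to the Green and Poisson kernel sharp estimates established in Section \ref{s:prelims}. Once Fubini is justified on both terms, the existence, uniqueness, and norm inequality all fall out immediately.
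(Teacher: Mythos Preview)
Your proposal is correct and follows essentially the same approach as the paper: the theorem is presented there as a summary of the preceding remark, which derives the formula $u=\GDFI\lambda+\PDFI\zeta$ from \eqref{eq:problem - integral definition} via Fubini (using Lemma~\ref{l:rate of GDdelta} and the Poisson kernel bounds for justification), and obtains the norm estimate from Lemma~\ref{l:rate of GDdelta} and \eqref{eq:harmonic L1 bound}. Your Fubini verification is spelled out more explicitly than in the paper, but the ingredients and structure are identical.
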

	
	In the next corollary we bring a version of a maximum principle for the weak solution.
	\begin{cor}\label{c:maximum principle weak solution}
		Let $\lambda\in \MM(D)$ and $\zeta\in \MM(\partial D)$ such that  
		\eqref{eq:condition on rate of solution} holds. If $\lambda\ge0$ and $\zeta\ge0$, then the unique solution $u$ of the linear problem $\eqref{linear problem}$ satisfies $u\ge0$ a.e. in $D$.
	\end{cor}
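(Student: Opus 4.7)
The plan is to apply Theorem \ref{t:linear problem} directly: since the hypotheses ensure existence of a unique weak solution $u$ with the explicit representation
\[
u(x) = \GDFI\lambda(x) + \PDFI\zeta(x) \quad \text{for a.e. } x \in D,
\]
the corollary reduces to showing that each of the two integral terms is non-negative a.e. under the sign assumptions on $\lambda$ and $\zeta$.

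For the Green potential term, recall that $\GDFI(x,y) = \int_0^\infty p_D(t,x,y)\,\uu(t)\,dt \geq 0$ since both the killed heat kernel $p_D$ and the potential density $\uu$ are non-negative. Hence, for $\lambda \geq 0$,
\[
\GDFI\lambda(x) = \int_D \GDFI(x,y)\,\lambda(dy) \geq 0, \quad x \in D.
\]
For the Poisson integral term, Proposition \ref{p:Poisson kernel definition} combined with the sharp bound \eqref{eq:Poisson sharp bounds} gives $\PDFI(x,z) > 0$ for all $x \in D$, $z \in \partial D$. Therefore, for $\zeta \geq 0$,
\[
\PDFI\zeta(x) = \int_{\partial D} \PDFI(x,z)\,\zeta(dz) \geq 0, \quad x \in D.
\]

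Adding the two inequalities yields $u(x) \geq 0$ for a.e. $x \in D$. There is essentially no obstacle here since all the work has already been done in establishing the representation formula in Theorem \ref{t:linear problem}, in the non-negativity of the Green function (built into definition \eqref{eq:defn Green function 1}), and in the positivity of the Poisson kernel from Proposition \ref{p:Poisson kernel definition}; the proof amounts to invoking these and observing that a sum of non-negative quantities is non-negative.
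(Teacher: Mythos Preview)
Your proof is correct and follows exactly the approach the paper intends: the corollary is stated without proof because it is an immediate consequence of the representation $u=\GDFI\lambda+\PDFI\zeta$ from Theorem \ref{t:linear problem} together with the non-negativity of the kernels $\GDFI$ and $\PDFI$. You have simply made explicit what the paper leaves as an obvious consequence.
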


	Now we connect the weak and the distributional  formulation of the Dirichlet problem. First,  we  define  the distributional solution.
	
	\begin{defn}\label{r:defn of distributional solution}
		We say that $u\in \LLL$ is a distributional solution of \eqref{linear problem} if for every $\psi\in C^\infty_c(D)$ it holds that
		\begin{align}\label{eq:distri solution}
			\int_D u(x)\Lo\psi(x)dx=\int_D \psi(x)\lambda(dx),
		\end{align}
		and if for every $\varphi \in C(\overline D)$ it holds that
		\begin{align}\label{eq:distri solution boundary}
			\lim_{t\downarrow 0}\frac1t \int_{\{\de(x)\le t\}}\frac{u(x)}{\PDFI\sigma(x)}\varphi(x)dx=\int_{\partial D}\varphi(z)\zeta(dz).
		\end{align}
	\end{defn}
	
	\begin{prop}
		Let $\lambda\in \MM(D)$ and $\zeta\in L^1(\partial D)$ such that \eqref{eq:condition on rate of solution} holds. Then the weak solution of \eqref{linear problem} is also a distributional solution of \eqref{linear problem}.
	\end{prop}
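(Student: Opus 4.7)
The plan is to invoke Theorem \ref{t:linear problem}, which identifies the weak solution as $u = \GDFI\lambda + \PDFI\zeta$ a.e.\ in $D$, and then verify the two conditions \eqref{eq:distri solution} and \eqref{eq:distri solution boundary} by splitting each as the sum of a Green-potential contribution and a Poisson-potential contribution. Each of the four resulting pieces is handled by a single result that has already appeared in the paper.

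For \eqref{eq:distri solution}, I would fix $\psi \in C_c^\infty(D)$ and write $\int_D u\,\Lo\psi\,dx = \int_D \GDFI\lambda \cdot \Lo\psi\,dx + \int_D \PDFI\zeta \cdot \Lo\psi\,dx$. The first integral equals $\int_D \psi\,d\lambda$ directly from Proposition \ref{p:wLo GDF=f no.2}, which asserts $\wLo \GDFI\lambda = \lambda$ in the distributional sense (note that the hypothesis $\int_D \de(x)|\lambda|(dx) < \infty$ from \eqref{eq:condition on rate of solution} is exactly what Proposition \ref{p:wLo GDF=f no.2} requires). The second integral vanishes: by Theorem \ref{t:PDFI harmonic} applied to the Jordan decomposition $\zeta = \zeta^+ - \zeta^-$ (and linearity), $\PDFI\zeta$ is harmonic in $D$, so $\langle \wLo \PDFI\zeta,\psi\rangle = \int_D \PDFI\zeta \cdot \Lo\psi\,dx = 0$ by Definition \ref{d:harmonic function}. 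The required integrability for Fubini-type manipulations is guaranteed by Lemma \ref{l:rate of GDdelta}, Lemma \ref{l: Lo psi < C delta}, and the sharp bounds \eqref{eq:Poisson sharp bounds}.

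For \eqref{eq:distri solution boundary}, fix $\varphi \in C(\overline D)$ and again split
\begin{equation*}
\frac{1}{t}\int_{\{\de(x)\le t\}}\frac{u(x)}{\PDFI\sigma(x)}\varphi(x)\,dx
= \frac{1}{t}\int_{\{\de(x)\le t\}}\frac{\GDFI\lambda(x)}{\PDFI\sigma(x)}\varphi(x)\,dx + \frac{1}{t}\int_{\{\de(x)\le t\}}\frac{\PDFI\zeta(x)}{\PDFI\sigma(x)}\varphi(x)\,dx.
\end{equation*}
The first term tends to $0$ as $t \downarrow 0$ by Proposition \ref{p:boundary operator on GDFi}, whose hypothesis is precisely $\int_D \de(x)|\lambda|(dx) < \infty$. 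The second term tends to $\int_{\partial D}\varphi(z)\zeta(z)\,d\sigma(z) = \int_{\partial D}\varphi\,d\zeta$ by Proposition \ref{p:boundary oper. Poisson L1}, which is applicable because the hypothesis $\zeta \in L^1(\partial D)$ is assumed in the statement. Summing the two limits gives \eqref{eq:distri solution boundary}.

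Since both conditions of Definition \ref{r:defn of distributional solution} are then verified, the proof is complete. There is no real obstacle here: the statement is essentially a bookkeeping assembly of four previously established results, and the only subtlety is matching hypotheses — in particular, noting that $u\in\LLL$ (again from Theorem \ref{t:linear problem}) so that the identity \eqref{eq:distri solution} makes sense, and that the restriction $\zeta \in L^1(\partial D)$ (stronger than the $\zeta \in \MM(\partial D)$ from Definition \ref{d:problem definition}) is needed precisely to apply Proposition \ref{p:boundary oper. Poisson L1}.
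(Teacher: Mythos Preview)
Your proposal is correct and follows essentially the same approach as the paper: identify $u=\GDFI\lambda+\PDFI\zeta$ via Theorem \ref{t:linear problem}, then verify \eqref{eq:distri solution} using Proposition \ref{p:wLo GDF=f no.2} and Theorem \ref{t:PDFI harmonic}, and verify \eqref{eq:distri solution boundary} using Proposition \ref{p:boundary operator on GDFi} and Proposition \ref{p:boundary oper. Poisson L1}. The paper's proof is terser but invokes exactly the same four results in the same roles.
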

	\begin{proof}
		The weak solution is given by $u=\GDFI \lambda+\PDFI\zeta$ so the relation \eqref{eq:distri solution} follows from Proposition \ref{p:wLo GDF=f no.2} and Theorem \ref{t:PDFI harmonic}. The boundary condition \eqref{eq:distri solution boundary}  follows from Proposition \ref{p:boundary oper. Poisson L1} and Proposition \ref{p:boundary operator on GDFi}.
	\end{proof}

	\section{Semilinear Dirichlet problem}\label{s:semilinear problem}
	
	In this section we study the following semilinear problem.
	\begin{defn}\label{d:semilinear problem}
		Let $f:D\times \R\to\R$ and $\zeta\in \MM(\partial D)$ such that $|\zeta|(\partial D)<\infty$. We say that $u\in L^1_{loc}(D)$ is a weak solution to the problem
		\begin{align}\label{semilinear problem}
			\begin{cases}
				\Lo u(x)=f(x,u(x)),&\textrm{in $D$},\\
				\frac{u}{\PDFI\sigma}=\zeta,&\textrm{on $\partial D$},
			\end{cases}
		\end{align}
		if
		\begin{align}\label{eq:defn semilinear solution}
			\int\limits_D u(x)\psi(x)=\int\limits_D\GDFI\psi(x)f(x,u(x))dx-\int\limits_{\partial D}\frac{\partial}{\partial \mathbf{n}}\GDFI\psi(z)\zeta(dz),\quad \psi\in C_c^\infty(D).
		\end{align}
		If in the equation above we have $\le$ ($\ge$) instead of the equality and the inequality holds for every non-negative $\psi\in C_c^\infty(D)$, then we say $u$ is a weak subsolution (supersolution) to \eqref{semilinear problem}.
	\end{defn}
	Note that if $u$ is a solution to the semilinear problem \eqref{semilinear problem}, then it is implicitly assumed that $x\mapsto f(x,u(x))\in \LLL$ since only then the first integral in \eqref{eq:defn semilinear solution} is well defined. For the sake of brevity, we will frequently use the notation $f_u(x)\coloneqq f(x,u(x))$, $x\in D$, which is also known as  the  Nemytskii operator. Further, in the same way as in the linear case,  we can see that if $u$ is a weak solution of \eqref{semilinear problem}, then by Fubini's theorem used in \eqref{eq:defn semilinear solution} we get
	\begin{align}\label{eq:solution semilinear}
		u=\GDFI f_u+\PDFI \zeta.
	\end{align}
	Conversely, if $u$ satisfies \eqref{eq:solution semilinear}, then  $u$ is a weak solution of \eqref{semilinear problem}.
	
	In the following subsection we prove Kato's inequality in our setting. This will help us to obtain existence and uniqueness results for various nonlinearities $f$ in the semilinear problem,  which we do in the final subsection of the article.

	\subsection{Kato's inequality}
	The proof of  Kato's inequality in our setting, i.e. Proposition \ref{p:Kato's inequality for t+}, is motivated by the proofs of Kato's inequality found in \cite{aba+dupaig,chen_veron} for the case of the spectral fractional Laplacian and the fractional Laplacian, respectively. First we need a lemma.
	\begin{lem}\label{l:Kato's inequality}
		Let $w$ be the weak solution to the linear problem
		\begin{align*}
			\begin{cases}
				\Lo u=h,&\textrm{in $D$},\\
				\frac{u}{\PDFI\sigma}=0,&\textrm{on $\partial D$},
			\end{cases}
		\end{align*}
		for $h\in\LLL$. Let $\Lambda\in C^2(\R)$ be a convex function such that  $\Lambda(0)=0$, and such that $|\Lambda'|\le C$ for some $C>0$. Then
		\begin{align}\label{eq:Kato's inequality}
			\int_D\Lambda(w(x))\Lo \psi(x)dx\le \int_D\Lambda'(w(x))h(x)\psi(x)dx,\quad \psi\in C_c^\infty(D),
		\end{align}
		and
		\begin{align}\label{eq:Kato's inequality Lambda(w)<GDFI}
			\Lambda(w)\le \GDFI\left[\Lambda'(w)h\right]\quad \text{a.e. in $D$.}
		\end{align}
	\end{lem}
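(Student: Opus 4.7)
The plan is to derive both inequalities from Jensen's inequality for the sub-Markovian semigroup $(Q_t^D)_t$, whose infinitesimal generator is $-\Lo$ (Remark \ref{r:infin. genera of X}). I will first treat the case $h\in L^2(D)$, where $L^2$ semigroup theory applies directly, and then approximate. For $h\in L^2(D)$, Proposition \ref{p:LGDf=-f} gives $w=\GDFI h\in H_\phi(D)=\DD(-\Lo)$, so $(w-Q_t^Dw)/t\to \Lo w=h$ in $L^2(D)$ as $t\downarrow 0$. Since $\Lambda$ is convex with $\Lambda(0)=0$ and $|\Lambda'|\le C$, it is $C$-Lipschitz, so $\Lambda(w)\in L^2(D)$. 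Because $Q_t^D$ is sub-Markovian and $\Lambda(0)=0$, Jensen's inequality yields $Q_t^D(\Lambda\circ w)\ge \Lambda(Q_t^D w)$ a.e.; combined with the tangent-line bound $\Lambda(Q_t^D w)\ge \Lambda(w)+\Lambda'(w)(Q_t^D w-w)$ this produces the pointwise a.e. key inequality
\[
\Lambda(w)-Q_t^D(\Lambda\circ w)\;\le\;\Lambda'(w)\,(w-Q_t^D w).
\]

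Next, for any non-negative $\psi$ belonging to $\DD(-\Lo)\cap L^\infty(D)$, I multiply by $\psi/t$, integrate over $D$, and use the symmetry of $Q_t^D$ in $L^2$ to rewrite the left-hand side as $\int_D \Lambda(w)(\psi-Q_t^D\psi)/t\,dx$. Passing to the limit $t\downarrow 0$ using the $L^2$ convergences $(\psi-Q_t^D\psi)/t\to\Lo\psi$ and $(w-Q_t^D w)/t\to h$, together with $\Lambda(w),\Lambda'(w)\psi\in L^2(D)$, I obtain the tested inequality
\[
\int_D \Lambda(w)\,\Lo\psi\,dx\;\le\;\int_D \Lambda'(w)\,h\,\psi\,dx.
\]
Specializing to $\psi\in C_c^\infty(D)$ with $\psi\ge 0$ yields \eqref{eq:Kato's inequality} in the $L^2$ case. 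For \eqref{eq:Kato's inequality Lambda(w)<GDFI} I instead choose $\psi=\GDFI\varphi$ for an arbitrary non-negative $\varphi\in C_c^\infty(D)$: by Proposition \ref{p:GDftwice diff}, $\psi\in C^{1,1}(\overline D)\cap H_\phi(D)$; $\psi\ge 0$ by positivity of $\GDFI$; and $\Lo\psi=\varphi$ by Proposition \ref{p:Lo various ways}. The tested inequality together with Fubini's theorem on the right-hand side (justified because $\Lambda'(w)h\in\LLL$, via Lemma \ref{l:rate of GDdelta}) gives
\[
\int_D \Lambda(w)\,\varphi\,dx\;\le\;\int_D \GDFI[\Lambda'(w)h]\,\varphi\,dx,
\]
and the arbitrariness of $\varphi\ge 0$ delivers $\Lambda(w)\le\GDFI[\Lambda'(w)h]$ a.e.

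The last step is to extend both inequalities from $h\in L^2(D)$ to $h\in\LLL$. I will use the truncation $h_n=h\,\1_{\{|h|\le n,\,\de\ge 1/n\}}\in L^2(D)$, for which $h_n\to h$ in $\LLL$, so $w_n=\GDFI h_n\to w$ in $\LLL$ by Corollary \ref{c:GDf finite if f in L1} and, after extracting a subsequence, a.e.\ in $D$. For \eqref{eq:Kato's inequality}, the left-hand side converges by the bound $|\Lo\psi|\lesssim\de$ from Lemma \ref{l: Lo psi < C delta} together with $|\Lambda(w_n)-\Lambda(w)|\le C|w_n-w|$; the right-hand side converges after splitting $\Lambda'(w_n)h_n-\Lambda'(w)h=\Lambda'(w_n)(h_n-h)+(\Lambda'(w_n)-\Lambda'(w))h$ and using that $\de$ is bounded below on $\supp\psi$. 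For \eqref{eq:Kato's inequality Lambda(w)<GDFI}, $\GDFI[\Lambda'(w_n)h_n](x)\to\GDFI[\Lambda'(w)h](x)$ pointwise a.e.\ by dominated convergence under the integral, with dominating function $C\GDFI(x,\cdot)|h|\in L^1(D)$ for a.e.\ $x$. The main technical hurdle will be the semigroup manipulation in deriving the tested inequality: it rests on the identification $\DD(-\Lo)=H_\phi(D)$ from Remark \ref{r:infin. genera of X}, on the inclusion $C_c^\infty(D)\subset H_\phi(D)$, and on the symmetry and sub-Markovianity of $Q_t^D$. Once these ingredients are in place, Jensen's inequality supplies the core estimate and the remainder is approximation.
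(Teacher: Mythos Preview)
Your proof is correct and takes a genuinely different route from the paper's. The paper works \emph{pointwise}: for $h\in C_c^\infty(D)$ it invokes Proposition~\ref{p:GDftwice diff} to get $w=\GDFI h\in C^{1,1}(\overline D)$, then uses the integro-differential representation \eqref{eq:Lo pointwisely} together with the convexity bound $\Lambda(w(x))-\Lambda(w(y))\le \Lambda'(w(x))(w(x)-w(y))$ inside the P.V.\ integral (and $\Lambda(t)\le t\Lambda'(t)$ for the $\kappa$-term) to obtain $\Lo[\Lambda\circ w]\le \Lambda'(w)\Lo w$ pointwise; integrating against $\psi\ge 0$ gives \eqref{eq:Kato's inequality}, and applying $\GDFI$ gives \eqref{eq:Kato's inequality Lambda(w)<GDFI}. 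The approximation step $C_c^\infty(D)\to\LLL$ is essentially the same as yours. Your approach bypasses the pointwise calculus entirely by using Jensen for the sub-Markovian semigroup $(Q_t^D)_t$ (crucially using $\Lambda(0)=0$ to handle the mass defect) combined with the tangent-line inequality, which yields the key estimate at the semigroup level for any $h\in L^2(D)$ without needing $w\in C^{1,1}$. This buys you independence from the regularity result Proposition~\ref{p:GDftwice diff} and a starting class $L^2(D)\supsetneq C_c^\infty(D)$; the paper's approach, in exchange, makes the mechanism of the inequality more transparent through the explicit nonlocal kernel $J_D$. Note, incidentally, that both arguments actually establish \eqref{eq:Kato's inequality} only for $\psi\ge 0$, consistent with how it is used downstream.
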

	
	\begin{proof}
		Recall that $w=\GDFI h\in\LLL$.
		
		Let $h\in C_c^\infty(D)$. Then by Proposition \ref{p:GDftwice diff} we have $w=\GDFI h\in C^{1,1}(\overline D)$ from which we can calculate $\Lo w$ and $\Lo \Lambda(w)$ pointwisely, see Proposition \ref{p:Lo various ways}. We have
		\begin{align*}
			\Lo&[\Lambda\circ w](x)  =\textrm{P.V.}\ \int_D[\Lambda(w(x))-\Lambda(w(y))]\,J_D(x,y)\:dy+\kappa(x)\,\Lambda(w(x)) \notag\\
			& =\ \Lambda'(w(x))\,\textrm{P.V.}\int_D[w(x)-w(y)]\,J_D(x,y)\:dy+\kappa(x)\,\Lambda(w(x)) \notag\\
			& \qquad-\textrm{P.V.}\int_D\left([w(x)-w(y)]^2\,J_D(x,y)
			\int_0^1\Lambda''(w(x)+t[w(y)-w(x)])(1-t)\,dt\right)dy\notag \\
			&\leq\ \Lambda'(w(x))\,\Lo w(x),
		\end{align*}
		where we have used that $\Lambda''\geq 0$ in $\R$ and 
		that $\Lambda(t)\leq t\Lambda'(t)$, which follows from $\Lambda(0)=0$ and the fact that $\Lambda'$ is non-decreasing. Integrating the previous inequality 
		with respect to $\psi(x)dx$, where $0\le \psi\in C_c^\infty(D)$, we get \eqref{eq:Kato's inequality}. Furthermore, since  both $w$ and $\Lambda(w)$ are in $C^{1,1}(\overline D)$, both sides of the previous inequality are in  $L^2(D)$, see Remark \ref{r:infin. genera of X}, 
		so we can apply Proposition \ref{p:LGDf=-f} 
		to get
		\begin{align*}
			\Lambda(w)=\GDFI[\Lo \Lambda(w)]\le \GDFI\left[\Lambda'(w)h\right]\quad \text{a.e. in $D$,}
		\end{align*}
		i.e. \eqref{eq:Kato's inequality Lambda(w)<GDFI} holds.
		
		Let $h\in \LLL$ and $(h_n)_n\subset C_c^\infty(D)$ such that $h_n\to h$ in $\LLL$ and a.e. in $D$.  By Corollary \ref{c:GDf finite if f in L1} we have $w_n\coloneqq \GDFI h_n\to w$ in $\LLL$ so by considering a subsequence we may assume that $w_n\to w$ a.e., too. From the first part of the proof we know
		\begin{align}
			\int_D\Lambda( w_n(x))&\Lo \psi(x) dx\le \int_D\Lambda'(w_n(x))h_n(x)\psi(x)dx\label{eq:Kato eq1}\\
			\text{and}\qquad\qquad&\Lambda(w_n)\le \GDFI\left[\Lambda'(w_n)h_n\right]\quad \text{a.e. in $D$,}\label{eq:Kato eq1.1}
		\end{align}
		for all $n\in \N$ and all $0\le \psi\in C_c^\infty(D)$.
		
		Now we will take $n$ in \eqref{eq:Kato eq1} and \eqref{eq:Kato eq1.1} to infinity. Recall that $|\Lo \psi|\le C_1\de$ by Lemma \ref{l: Lo psi < C delta}. Also, since $|\Lambda'|\le C$, we have $|\Lambda(t)-\Lambda(s)|\le C|t-s|$. By using these two facts and the fact that both $w_n\to w$ and $h_n\to h$ in $\LLL$, both sides of  \eqref{eq:Kato eq1} converge. Hence, by taking the limit in \eqref{eq:Kato eq1} we obtain
		$$\int_D\Lambda(w(x))\Lo \psi(x)dx\le \int_D\Lambda'(w(x))h(x)\psi(x)dx.$$
		
		Before we take the limit in equality \eqref{eq:Kato eq1.1}, note that $\Lambda\in C^2(\R)$ so $\Lambda(w_n)\to \Lambda(w)$ and $\Lambda'(w_n)\to \Lambda'(w)$ a.e. in $D$. Further, again by  $|\Lambda'|\le C$ and the fact that $h_n\to h$ in $\LLL$ we have
		\begin{align*}
			\left|\GDFI\left[\Lambda'(w_n)h_n\right]-\GDFI\left[\Lambda'(w)h\right]\right|&\le \GDFI\big[|\Lambda'(w)-\Lambda'(w_n)||h|\big]\\
			&\qquad\qquad+\GDFI\big[|\Lambda'(w_n)||h-h_n|\big]\to0,\quad n\to\infty,
		\end{align*}
		where the first term goes to zero by the dominated convergence theorem, and the second by the continuity of $\GDFI$ acting on $\LLL$, i.e. by Lemma \ref{l:rate of GDdelta}.
		This calculation justifies taking the limit in \eqref{eq:Kato eq1.1} to get
		$$\Lambda(w)\le \GDFI\left[\Lambda'(w)h\right]\quad \text{a.e. in $D$}.$$

	\end{proof}

	\begin{rem}
		For $h\in L^\infty(D)$ the inequalities \eqref{eq:Kato's inequality} and \eqref{eq:Kato's inequality Lambda(w)<GDFI} hold for every convex function $\Lambda\in C^2(\R)$ such that $\Lambda(0)=0$ since the assumption $|\Lambda'|\le C$ was used only as a technical tool to justify the usage of the dominated convergence theorem for general $h\in \LLL$. 
	\end{rem}
	In the next proposition we prove Kato's inequality  which says that we can take $\Lambda(t)=t^+=t\vee 0$ in Lemma \ref{l:Kato's inequality}.
	\begin{prop}[Kato's inequality]\label{p:Kato's inequality for t+}
		Let $w$ be the weak solution to the linear problem
		\begin{align*}
			\begin{cases}
				\Lo u=h,&\textrm{in $D$},\\
				\frac{u}{\PDFI\sigma}=0,&\textrm{on $\partial D$},
			\end{cases}
		\end{align*}
		for $h\in\LLL$. Then for every $\psi \in C_c^\infty(D)$, $\psi\ge 0$, it holds that
		\begin{align}\label{eq:Kato's for t^+}
			\int_D w(x)^+\Lo\psi(x)dx&\le \int_{\{w> 0\}}h(x)\psi(x)dx.
		\end{align}
		Moreover, it holds that
		\begin{align}\label{eq:Kato's inequality with GDFI}
			w^+&\le \GDFI\left[\1_{\{w> 0 \}}h\right],\quad \text{a.e. in $D$.}
		\end{align}
	\end{prop}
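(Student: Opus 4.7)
The plan is to approximate $t \mapsto t^{+}$ by a sequence of convex $C^{2}$ functions to which Lemma \ref{l:Kato's inequality} applies, and then pass to the limit. Specifically, I would construct $\Lambda_{n}\in C^{2}(\mathbb{R})$ convex, with $\Lambda_{n}(0)=0$, $\Lambda_{n}(t)=0$ for $t\le 0$, $\Lambda_{n}(t)=t-\tfrac{1}{2n}$ for $t\ge 1/n$, and $|\Lambda_{n}'|\le 1$ on $\mathbb{R}$ (a standard mollification of the ramp $(t-\tfrac{1}{2n})^{+}$ does the job). Then $\Lambda_{n}(t)\nearrow t^{+}$ with $|\Lambda_{n}(t)-t^{+}|\le 1/n$ uniformly in $t$, while $\Lambda_{n}'(t)\to\mathbf{1}_{\{t>0\}}$ pointwise for every $t\in\mathbb{R}$ (note $\Lambda_{n}'(0)=0$ for all $n$, matching $\mathbf{1}_{\{\cdot>0\}}(0)=0$).

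Applying Lemma \ref{l:Kato's inequality} to each $\Lambda_{n}$ yields, for every $0\le\psi\in C_{c}^{\infty}(D)$,
\begin{equation}\label{eq:plan-approx}
\int_{D}\Lambda_{n}(w(x))\,\Lo\psi(x)\,dx \le \int_{D}\Lambda_{n}'(w(x))\,h(x)\,\psi(x)\,dx,
\end{equation}
and $\Lambda_{n}(w)\le \GDFI[\Lambda_{n}'(w)\,h]$ a.e.\ in $D$. I pass to the limit $n\to\infty$ in \eqref{eq:plan-approx}: on the left, $\Lambda_{n}(w)\to w^{+}$ pointwise with $|\Lambda_{n}(w)|\le |w|$, and $|\Lo\psi|\le C_{1}\de$ by Lemma \ref{l: Lo psi < C delta}; since $w=\GDFI h\in \LLL$, dominated convergence gives $\int_{D}\Lambda_{n}(w)\Lo\psi\,dx\to\int_{D}w^{+}\Lo\psi\,dx$. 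On the right, $\Lambda_{n}'(w)\to \mathbf{1}_{\{w>0\}}$ pointwise with $|\Lambda_{n}'(w)|\le 1$, and $h\psi\in L^{1}(D)$ because $\psi$ has compact support in $D$ (on $\operatorname{supp}\psi$, $\de$ is bounded below, so $h\in L^{1}(\operatorname{supp}\psi)$ follows from $h\de\in L^{1}(D)$). Dominated convergence yields the right-hand side $\int_{\{w>0\}}h\psi\,dx$, which is \eqref{eq:Kato's for t^+}.

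For the pointwise inequality \eqref{eq:Kato's inequality with GDFI}, I take $n\to\infty$ in $\Lambda_{n}(w)\le \GDFI[\Lambda_{n}'(w)\,h]$. The left side converges to $w^{+}$ pointwise. For the right side, write
\begin{equation*}
\bigl|\GDFI[\Lambda_{n}'(w)h]-\GDFI[\mathbf{1}_{\{w>0\}}h]\bigr| \le \GDFI\bigl[|\Lambda_{n}'(w)-\mathbf{1}_{\{w>0\}}|\,|h|\bigr].
\end{equation*}
Since $|\Lambda_{n}'(w)-\mathbf{1}_{\{w>0\}}|\le 2$ and tends to $0$ pointwise, dominated convergence (with dominating function $2|h|\in \LLL$) shows that $|\Lambda_{n}'(w)-\mathbf{1}_{\{w>0\}}|\,|h|\to 0$ in $\LLL$. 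By Corollary \ref{c:GDf finite if f in L1}, $\GDFI$ is continuous on $\LLL$, hence the right-hand side above tends to $0$ in $\LLL$, so along a subsequence $\GDFI[\Lambda_{n}'(w)h]\to \GDFI[\mathbf{1}_{\{w>0\}}h]$ a.e., giving \eqref{eq:Kato's inequality with GDFI}.

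The mild technical obstacle is the behavior on the level set $\{w=0\}$: the pointwise limit of $\Lambda_{n}'$ must agree with $\mathbf{1}_{\{\cdot>0\}}$ there, which is why I insist on $\Lambda_{n}'(0)=0$ in the construction (automatic if $\Lambda_{n}$ is identically zero on a neighborhood of $(-\infty,0]$). Beyond this, the argument is a routine approximation bolstered by the $\LLL$-continuity of $\GDFI$ and Lemma \ref{l: Lo psi < C delta}, both already at our disposal.
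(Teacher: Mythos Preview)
Your proposal is correct and follows essentially the same approach as the paper: approximate $t\mapsto t^+$ by a sequence of convex $C^2$ functions $\Lambda_n$ with $\Lambda_n(0)=0$ and $|\Lambda_n'|\le 1$, apply Lemma~\ref{l:Kato's inequality}, and pass to the limit via dominated convergence (the paper writes down an explicit piecewise-polynomial $\Lambda_n$ rather than invoking mollification, but the idea and all the convergence justifications are the same).
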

	\begin{proof}
		First, let us prove \eqref{eq:Kato's for t^+}. Set $\Lambda(t)=t\vee 0$ and $w=\GDFI h$ where $h\in \LLL$. Also, for every $n\in\N$ let $\Lambda_n:\R\to\R$ be defined by
		\begin{align}
			\Lambda_n(t)=\begin{cases}
				0,&t\le 0\\
				\frac{n^2t^3}{6},&t\in (0,\frac1n]\\
				\frac{1}{3n}-t+nt^2-\frac{n^2t^3}{6},&t\in (\frac{1}{n},\frac2n]\\
				t-\frac1n,&t>\frac2n.
			\end{cases}
		\end{align}
		We have that $\Lambda_n\in C^2(\R)$ , $0\le \Lambda_n\le \Lambda$, and $0\le \Lambda'_n\le 1$ in $\R$. Also, $\Lambda_n\to\Lambda$ and $\Lambda_n'\to \1_{(0,\infty)}$ in $\R$ as $n\to\infty$.  Thus, Lemma \ref{l:Kato's inequality} yields
		\begin{align}\label{eq:Kato eq2.2}
			\int_D\Lambda_n(w(x))\Lo \varphi(x)dx\le \int_D\Lambda_n'(w(x))h(x)\varphi(x)dx
		\end{align} 
		and the relation \eqref{eq:Kato's for t^+} follows from \eqref{eq:Kato eq2.2} by using the dominated convergence theorem.
		
		Let us now turn to \eqref{eq:Kato's inequality with GDFI}. Consider again the sequence $\Lambda_n$ defined above. Lemma \ref{l:Kato's inequality} yields
		\begin{align}\label{eq:Kato aux GDFI}
			\Lambda_n(w)\le \GDFI\left[\Lambda_n'(w)h\right],\quad\text{a.e. in $D$ and for all $n\in\N$.}
		\end{align}
		Again, by taking $n\to\infty$ and by using the dominated convergence theorem we get
		\begin{align*}
			w^+&\le \GDFI\left[\1_{\{w> 0 \}}h\right],\quad \text{a.e. in $D$.}
		\end{align*}
		
	\end{proof}
	\begin{rem}\label{r:Kato inequality extension}
		By modifying the proof of the previous proposition we also get
		\begin{align}
			\int_D w(x)^+\Lo\psi(x)dx\le \int_{\{w\ge 0\}}h(x)\psi(x)dx,\label{eq:Kato ineq.1 altern }
		\end{align}
		and 
		\begin{align}
			w^+\le \GDFI\left[\1_{\{w\ge 0 \}}h\right],\quad \text{a.e. in $D$.}
		\end{align}
		Indeed, in the proof we only need to change $\Lambda_n$ to $\tilde\Lambda_n\in C^2(\R)$ such that $\tilde\Lambda_n(t)=\Lambda_n(t+\frac2n)-\frac1n$. For $\tilde\Lambda_n$ it holds that 
		$$-\frac{1}{n}\le \tilde\Lambda_n\le \Lambda, \quad0\le \tilde\Lambda_n'\le1,\quad \lim_n\tilde\Lambda_n= \Lambda,\enskip\text{and}\enskip  \lim_n\tilde\Lambda_n'=\1_{[0,\infty)}$$
		in $\R$.	By repeating the procedure in the proof of the previous proposition we get the claim.
		
	\end{rem}
	
	\begin{rem}
		Note that Kato's inequality was proved only for weak solutions of linear problems with a zero boundary condition whereas the classical Kato's inequality holds for subsolutions even if the considered linearity is a measure, see \cite{brezis_ponce}. To the best of our knowledge it is not clear whether the inequality \eqref{eq:Kato's for t^+} holds for subsolutions since the non-local nature of the operator $\Lo$ causes problems in the calculations in Proposition \ref{p:Kato's inequality for t+}. Even in simpler non-local cases as in \cite{aba+dupaig} and \cite{chen_veron} Kato's inequality was proved only for solutions, see \cite[Lemma 31]{aba+dupaig} and \cite[Proposition 2.4]{chen_veron}.
	\end{rem}
	
	In the next corollary we bring a simple consequence of Kato's inequality which is the fact interesting in itself.
	\begin{cor}\label{c:maximum of solutions is subsolution}
		Let $u$ and $v$ be weak solutions of \eqref{semilinear problem}. Then $\max\{u,v\}$ is a subsolution of \eqref{semilinear problem}.
	\end{cor}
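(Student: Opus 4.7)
The plan is to reduce the claim to Kato's inequality by looking at the difference $w = u-v$, which solves a linear problem with zero boundary datum, and then to repackage the resulting pointwise inequality into the weak subsolution form of Definition \ref{d:semilinear problem}.

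Since $u$ and $v$ are both weak solutions of \eqref{semilinear problem} with the same boundary datum $\zeta$, \eqref{eq:solution semilinear} gives
\begin{equation*}
u = \GDFI f_u + \PDFI\zeta, \qquad v = \GDFI f_v + \PDFI\zeta \qquad \text{a.e.\ in } D.
\end{equation*}
In particular, $f_u, f_v \in \LLL$. Setting $h \coloneqq f_u - f_v \in \LLL$, the difference $w \coloneqq u - v = \GDFI h$ is the weak solution of the linear problem with source $h$ and zero boundary datum; this is exactly the setting of Proposition \ref{p:Kato's inequality for t+}.

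Applying Kato's inequality \eqref{eq:Kato's inequality with GDFI} to $w$ gives
\begin{equation*}
w^+ \le \GDFI\bigl[\1_{\{w>0\}}(f_u - f_v)\bigr] \qquad \text{a.e.\ in } D.
\end{equation*}
Adding $v$ to both sides and using $\max\{u,v\} = v + w^+$, I would then rewrite the right-hand side as
\begin{equation*}
v + \GDFI\bigl[\1_{\{w>0\}}(f_u - f_v)\bigr] = \GDFI\bigl[\1_{\{w>0\}} f_u + \1_{\{w\le 0\}} f_v\bigr] + \PDFI\zeta.
\end{equation*}
On $\{w>0\} = \{u>v\}$ we have $\max\{u,v\} = u$, and on $\{w\le 0\}$ we have $\max\{u,v\} = v$ a.e., so the integrand inside the Green operator equals $f(x,\max\{u,v\}(x))$ a.e. Hence
\begin{equation*}
\max\{u,v\} \le \GDFI f_{\max\{u,v\}} + \PDFI\zeta \qquad \text{a.e.\ in } D.
\end{equation*}

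Finally, fixing any $\psi \in C_c^\infty(D)$ with $\psi \ge 0$, I would integrate the last inequality against $\psi$ and use Fubini together with the symmetry of $\GDFI$ to rewrite $\int_D \GDFI f_{\max\{u,v\}}(x)\,\psi(x)\,dx = \int_D \GDFI\psi(x)\, f(x,\max\{u,v\}(x))\,dx$, and the identity $-\frac{\partial}{\partial \mathbf{n}}\GDFI\psi(z) = \int_D \PDFI(y,z)\psi(y)\,dy$ (see the remark following Definition \ref{d:problem definition}) to rewrite the $\PDFI\zeta$ term. This produces precisely the subsolution inequality of Definition \ref{d:semilinear problem}. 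The main technical point is the a.e.\ identification $\1_{\{u>v\}} f_u + \1_{\{u\le v\}} f_v = f_{\max\{u,v\}}$, but this is immediate from the definition of $\max$, so all the real work is already absorbed by Kato's inequality.
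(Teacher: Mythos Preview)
Your proof is correct and follows essentially the same route as the paper: apply Kato's inequality to $w=u-v=\GDFI(f_u-f_v)$, use $\max\{u,v\}=v+w^+$, and identify $\1_{\{u>v\}}f_u+\1_{\{u\le v\}}f_v=f_{\max\{u,v\}}$. The only cosmetic difference is that the paper invokes the integrated form \eqref{eq:Kato's for t^+} of Kato's inequality and works directly with test functions, whereas you use the pointwise form \eqref{eq:Kato's inequality with GDFI} and pass to the weak formulation at the end; these are equivalent by Fubini.
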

	\begin{proof}
		Applying Proposition \ref{p:Kato's inequality for t+} to the $w\coloneqq u-v$ and $h(x)\coloneqq f(x,u(x))-f(x,v(x))$ we get
		\begin{align*}
			\int_D w^+(x) \Lo\psi(x) dx\le \int_{u> v}\left[f(x,u(x))-f(x,v(x))\right]\psi(x)dx,\quad \psi\in C_c^\infty(D),\, \psi\ge 0.
		\end{align*}
		Since $\max\{u,v\}=v+(u-v)^+=v+w^+$ we have for all non-negative $\psi\in C_c^\infty(D)$
		\begin{align*}
			\int_D \max\{u,v\}(x)&\Lo\psi(x) dx\\
			&\le \int_D f(x,v(x))\psi(x)dx-\int\limits_{\partial D}\frac{\partial}{\partial \mathbf{n}}\GDFI\psi(z)\zeta(dz)\\&\qquad\qquad\qquad\qquad+\int_{u> v}\left[f(x,u(x))-f(x,v(x))\right]\psi(x)dx\\
			&=\int_{u\le v} f(x,v(x))\psi(x)dx-\int\limits_{\partial D}\frac{\partial}{\partial \mathbf{n}}\GDFI\psi(z)\zeta(dz)\\&\qquad\qquad\qquad\qquad+\int_{u> v}f(x,u(x))\psi(x)dx\\
			&=\int_Df(x,\max\{u,v\}(x))\psi(x) dx-\int\limits_{\partial D}\frac{\partial}{\partial \mathbf{n}}\GDFI\psi(z)\zeta(dz).
		\end{align*}
	\end{proof}

	\subsection{Semilinear problem}
	In this subsection we prove  the  existence and uniqueness results for the semilinear problem \eqref{semilinear problem}. As such, the subsection is central  to  the article. 
	
	For the nonlinearity $f$ in the following problems we will almost always assume that the following condition holds.
	\begin{assumption}{F}{}\label{F}
		$f:D\times \R\to\R$ is continuous in the second variable, and there exist  a  locally bounded function $\rho:D\to[0,\infty]$ and  a   non-decreasing function $\Lambda:[0,\infty)\to[0,\infty)$ such that $|f(x,t)|\le \rho(x)\Lambda(|t|)$, $x\in D$, $t\in \R$.
	\end{assumption}
	
	From now on, the function $f$ will be solely used as a nonlinearity in the semilinear problem and the functions $\rho$ and $\Lambda$ are solely used as the functions in the condition \ref{F} for $f$.
	
	Our first result is  the uniqueness theorem for general nonlinearity $f$ which is non-increasing in the second variable.
	\begin{prop}\label{p:uniqueness if nonincreasing}
		If the nonlinearity $f$ in \eqref{semilinear problem} is non-increasing in the second variable, then the weak solution of \eqref{semilinear problem}, if it exists, is unique (up to the modification on the Lebesgue null set).
	\end{prop}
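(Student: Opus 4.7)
The plan is to study the difference $w := u - v$ of two weak solutions. Since both $u$ and $v$ solve \eqref{semilinear problem} with the same boundary measure $\zeta$, plugging both into \eqref{eq:defn semilinear solution} and subtracting shows that for every $\psi \in C_c^\infty(D)$
\begin{align*}
\int_D w(x)\psi(x)\,dx = \int_D \GDFI\psi(x)\,h(x)\,dx,
\end{align*}
where $h(x) := f(x,u(x)) - f(x,v(x))$. Because $f_u, f_v \in \LLL$ (implicit in being solutions), we have $h \in \LLL$, so $w$ is the unique weak solution of the linear problem with source $h$ and zero boundary datum; in particular, by Theorem \ref{t:linear problem}, $w = \GDFI h$ a.e. in $D$.

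Next, I would apply Kato's inequality in the form \eqref{eq:Kato's inequality with GDFI} from Proposition \ref{p:Kato's inequality for t+} to $w$. This gives
\begin{align*}
w^+ \le \GDFI\bigl[\1_{\{w > 0\}}\, h\bigr] \quad \text{a.e. in } D.
\end{align*}
The monotonicity hypothesis on $f$ now does the work: on the set $\{w > 0\} = \{u > v\}$, since $f(x,\cdot)$ is non-increasing, we have $f(x,u(x)) \le f(x,v(x))$, so $\1_{\{w > 0\}}\, h \le 0$ pointwise in $D$. Because $\GDFI$ is a positivity-preserving operator (its kernel is non-negative), it follows that $\GDFI[\1_{\{w > 0\}}\, h] \le 0$ a.e., and hence $w^+ \le 0$ a.e., i.e.\ $u \le v$ a.e.\ in $D$. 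Interchanging the roles of $u$ and $v$ and repeating the argument yields $v \le u$ a.e., so $u = v$ a.e.\ in $D$.

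There is no real obstacle here; the only minor point to check is that Kato's inequality is applicable, which requires $w$ to be a weak solution of a linear problem with source in $\LLL$ and zero boundary datum. Both conditions hold by the preceding step, so the argument goes through cleanly. The monotonicity of $f$ in the second variable is precisely what is needed to conclude that the right-hand side of Kato's inequality is non-positive on the relevant set.
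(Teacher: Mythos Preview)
Your proof is correct and follows essentially the same route as the paper's: form the difference $w=u-v$, observe that it solves the linear problem with source $h=f_u-f_v$ and zero boundary datum, apply Kato's inequality \eqref{eq:Kato's inequality with GDFI}, and use the monotonicity of $f$ to conclude $w^+\le 0$, then swap roles. The only cosmetic difference is that you spell out the intermediate step $w=\GDFI h$ via Theorem \ref{t:linear problem} and the positivity of the Green kernel explicitly, whereas the paper compresses these into the single line $w^+\le \GDFI[\1_{\{u>v\}}(f_u-f_v)]\le 0$.
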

	\begin{proof}
		Let $u$ and $v$ be two solutions of \eqref{semilinear problem}. Then $w\coloneqq u-v$ solves the linear problem
		\begin{align*}
			\begin{cases}
				\Lo w(x)=f(x,u(x))-f(x,v(x)),&\text{in $D$,}\\
				\frac{w}{\PDFI\sigma}=0,&\text{on $\partial D$.}
			\end{cases}
		\end{align*}
		By Kato's inequality \eqref{eq:Kato's inequality with GDFI}, since $f$ is non-increasing in the second variable, we have
		\begin{align}\label{eq:Kato for uniqueness}
			w^+\le \GDFI\left[\1_{\{u>v\}}\cdot \big(f_u-f_v\big) \right] \le 0.
		\end{align}
		Thus, $u\le v$ a.e. in $D$. Reversing the roles of $u$ and $v$ we get $u\ge v$ a.e. in $D$, hence $u=v$ a.e. in $D$.
	\end{proof}

	The next theorem, Theorem \ref{t:super/subsolution implies existence}, deals with a semilinear problem with a zero boundary condition and it is a generalization of \cite[Theorem 32]{aba+dupaig} to our setting of more general non-local operators.  Theorem \ref{t:super/subsolution implies existence} will be of great importance for a general semilinear problem (with a non-zero boundary condition), and it is, in fact, the cornerstone of the proof of Theorem \ref{t:t1 semilinear problem}.  A  somewhat similar role to  a  semilinear problem in slightly different non-local setting plays \cite[Theorem 3.6]{semilinear_bvw}.
	\begin{thm}\label{t:super/subsolution implies existence} 
		Let $f$ satisfy $\ref{F}$. Assume that there exist a  supersolution $\overline u$ and a subsolution $\underline{u}$ to the semilinear problem
		\begin{align}\label{eq:semilinear sub super problem}
			\begin{cases}
				\Lo u(x)=f(x,u(x)),&\textrm{in $D$},\\
				\frac{u}{\PDFI\sigma}=0,&\textrm{on $\partial D$},
			\end{cases}
		\end{align}
		of the form $\underline u=\GDFI \underline h$ and $\overline u=\GDFI \overline h$ such that $\underline u\le \overline u$, $\underline h(x)\le f(x,\underline u(x))$ and $f(x,\overline u(x))\le \overline h(x)$ a.e. in $D$, and such that $\overline{u},\underline{u}\in\LLL \cap L^\infty_{loc}(D)$. Further, assume that $\rho\Lambda(|\underline u|\vee |\overline u|)\in \LLL$.

		Then there exist weak solutions $u_1,u_2\in\LLL$ of \eqref{eq:semilinear sub super problem} such that every solution of \eqref{eq:semilinear sub super problem} with property $\underline{u}\le u \le \overline u$  satisfies
		$$ \underline u\le u_1 \le u\le u_2\le \overline u.$$
		
		Further, every weak solution $u$ of \eqref{eq:semilinear sub super problem} with property $\underline{u}\le u \le \overline u$ is continuous after the modification on a Lebesgue null set.
		
		Additionally, if the nonlinearity $f$ is non-increasing in the second variable, the weak solution of \eqref{eq:semilinear sub super problem} is unique.
	\end{thm}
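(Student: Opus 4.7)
The plan is to secure some weak solution in $[\underline u, \overline u]$ via Schauder's fixed point theorem applied to a truncated problem, use Kato's inequality to confine the fixed point to the sandwich, and finally obtain the maximal and minimal solutions by a directedness argument based on Corollary~\ref{c:maximum of solutions is subsolution}. For the truncation, set
\[
g(x, t) := f\bigl(x,\, \min\{\overline u(x),\,\max\{\underline u(x),\, t\}\}\bigr), \qquad (x, t) \in D \times \R,
\]
so that $|g(x,t)| \le H(x) := \rho(x)\,\Lambda(|\underline u(x)| \vee |\overline u(x)|)$, with $H \in \LLL \cap L^\infty_{loc}(D)$ by~\ref{F} and the standing hypothesis. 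Writing $M := \GDFI H$, one has $M \in \LLL \cap C(D)$ by Corollary~\ref{c:GDf finite if f in L1} and Proposition~\ref{p:continuity of GDf}, and the closed convex set $\mathcal C := \{v \in \LLL : |v| \le M \text{ a.e.}\}$ is invariant under $T v := \GDFI g(\cdot, v)$.

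The bulk of the work is verifying the Schauder hypotheses on $\mathcal C$. Continuity of $T$ in $\LLL$ follows by passing to an a.e.-convergent subsequence of $v_n \to v$, applying dominated convergence with dominator $2H$ (using continuity of $f$ in its second variable) to get $g(\cdot, v_n) \to g(\cdot, v)$ in $\LLL$, and invoking Corollary~\ref{c:GDf finite if f in L1}. Compactness of $T(\mathcal C)$ is the main obstacle; the key point is that $\{g(\cdot, v) : v \in \mathcal C\}$ is dominated by the fixed $H \in \LLL \cap L^\infty_{loc}(D)$, so Remark~\ref{r:equicont GDf} yields equicontinuity of $\{Tv : v \in \mathcal C\}$ on every $K \subsub D$. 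Arzel\`a--Ascoli together with a diagonal argument extracts a subsequence converging uniformly on compact subsets of $D$, and the domination $|Tv| \le M \in \LLL$ with dominated convergence upgrades this to $\LLL$-convergence. Schauder then produces $u \in \mathcal C$ with $u = \GDFI g(\cdot, u)$. To confirm $u \in [\underline u, \overline u]$, apply Proposition~\ref{p:Kato's inequality for t+} to $w := u - \overline u = \GDFI(g(\cdot, u) - \overline h)$: on $\{u > \overline u\}$ the truncation forces $g(\cdot, u) = f(\cdot, \overline u) \le \overline h$, so Kato gives $w^+ \le 0$ a.e., i.e.\ $u \le \overline u$ a.e. The reverse bound follows symmetrically from Remark~\ref{r:Kato inequality extension}. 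Hence $g(\cdot, u) = f(\cdot, u)$ and $u$ is a weak solution of~\eqref{eq:semilinear sub super problem} in $[\underline u, \overline u]$. Continuity of $u$ (after a null-set modification) follows from Proposition~\ref{p:continuity of GDf} since $u = \GDFI f_u$ with $|f_u| \le H \in \LLL \cap L^\infty_{loc}(D)$, and uniqueness when $f$ is non-increasing in the second variable is Proposition~\ref{p:uniqueness if nonincreasing}.

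For the maximal solution, let $\mathcal S$ denote the (non-empty) family of weak solutions lying in $[\underline u, \overline u]$. Given $u^1, u^2 \in \mathcal S$, Corollary~\ref{c:maximum of solutions is subsolution} makes $\max\{u^1, u^2\}$ a weak subsolution; although it need not itself be a Green potential, one can rerun the Schauder--Kato argument above with the pair $(\max\{u^1, u^2\}, \overline u)$, replacing the Green-potential identity in the Kato step by the weak subsolution integral inequality, to produce a solution in $\mathcal S$ that dominates both. Thus $\mathcal S$ is upward-directed, its pointwise supremum reduces to the supremum of a countable increasing sequence $(u_n)$ from $\mathcal S$, and passing to the limit in $u_n = \GDFI f_{u_n}$ via dominated convergence (with dominator $H$) yields $u_2 \in \mathcal S$. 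The minimal $u_1$ is built symmetrically. Beyond the compactness step in Schauder, the subtle point is applying Kato to the non-potential subsolution $\max\{u^1, u^2\}$, which requires a more careful test-function argument but reuses the ingredients of Proposition~\ref{p:Kato's inequality for t+}.
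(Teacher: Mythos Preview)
Your Schauder--truncation--Kato argument for producing \emph{some} solution in $[\underline u,\overline u]$ is essentially the paper's Step~1, with cosmetic differences (you restrict to $\mathcal C=\{|v|\le M\}$ and use dominated convergence where the paper uses Vitali via Lemma~\ref{apx:l:Montenegro and Ponce Prop 2.1}; both are fine). The continuity and uniqueness paragraphs also match.

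The gap is in your construction of the maximal and minimal solutions. Your directedness argument hinges on rerunning the Kato step with the subsolution $\max\{u^1,u^2\}$, which you acknowledge is not of the form $\GDFI h$. You write that this ``requires a more careful test-function argument but reuses the ingredients of Proposition~\ref{p:Kato's inequality for t+}''. But the paper explicitly remarks (just after Remark~\ref{r:Kato inequality extension}) that it is \emph{not known} whether Kato's inequality \eqref{eq:Kato's for t^+} extends to subsolutions in this non-local setting; the obstruction is genuine and not merely a matter of care with test functions. Concretely, to show the Schauder fixed point $u$ dominates $\max\{u^1,u^2\}$ you would need $(\max\{u^1,u^2\}-u)^+\le 0$, but $\max\{u^1,u^2\}-u$ is not a Green potential, and attempting to work instead with $u^i-u=\GDFI(f_{u^i}-g'(\cdot,u))$ fails because on $\{u^i>u\}$ the truncation gives $g'(\cdot,u)=f(\cdot,\max\{u^1,u^2\})$, and $f(\cdot,u^i)-f(\cdot,\max\{u^1,u^2\})$ has no sign without monotonicity.

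The paper sidesteps this entirely by using Zorn's lemma on the solution set $\mathcal P$. For a \emph{totally ordered} chain $\{u_i\}\subset\mathcal P$ one never needs to merge two incomparable solutions: equicontinuity (Remark~\ref{r:equicont GDf}, using the common dominator $H$) makes $\sup_i u_i$ continuous and reachable by an increasing sequence $(u_n)\subset\{u_i\}$, and dominated convergence in $u_n=\GDFI f_{u_n}$ shows the supremum lies in $\mathcal P$. Zorn then supplies the maximal element directly. This is what you should use in place of the directedness step.
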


	\begin{proof}
		\textit{Step 1: existence of a solution to \eqref{eq:semilinear sub super problem}.}
		Define the function $F:D\times \R\to \R$ by
		\begin{align*}
			F(x,t)=\begin{cases}
				f(x,\underline u(x)),&t<\underline u(x),\\
				f(x,t),&\underline{u}\le t \le \overline u,\\
				f(x,\overline u(x)),&\overline u(x)<t,
			\end{cases}
		\end{align*}
		and denote by $F_v(x)\coloneqq F(x,v(x))$. Note that since $f$ is continuous in the second variable, so is $F$. Further, $|F_v|\le \rho\Lambda(|\underline u|\vee |\overline u|)$, hence $F_v\in \LLL$, for all $v\in \LLL$.
		
		Also, the mapping $v\mapsto F_v$ is continuous from $\LLL$ to $\LLL$. Indeed, take $v_n\to v$ in $\LLL$ and let $(v_{n_k})_k$ be a subsequence of $(v_n)_n$ which converges to $v$  a.e. By Lemma \ref{apx:l:Montenegro and Ponce Prop 2.1} the family $\{F_{v_{n_k}}:k\in\N\}$ is uniformly integrable with respect to the measure $\de(x)dx$, hence by Vitali's theorem \cite[Theorem 16.6]{schilling2017measures}, we get $F_{v_{n_k}}\to F_v$ in $\LLL$ because $F$ is continuous in the second variable. However, the limit does not depend on the subsequence $(v_{n_k})_k$ so $v\mapsto F_v$ is continuous.
		
		Next we prove that the operator $\KK:\LLL \to \LLL$ defined by 
		$$\KK v(x)=\int_D\GDFI(x,y)F(y,v(y))dy,\quad x\in D,$$
		is compact. Since $v\mapsto F_v$ is continuous in $\LLL$, Corollary \ref{c:GDf finite if f in L1}  implies that $\KK$ is continuous $\LLL$, too. To have compactness, we are left to prove that $\KK$ maps bounded sets to relatively compact sets. To this end, take a bounded sequence $(v_n)_n\subset \LLL$. Recall  that  $|F_{v_n}|\le \rho\Lambda(|\underline u|\vee |\overline u|)$,  and notice that  $\rho\Lambda(|\underline u|\vee |\overline u|)\in\LLL\cap L^\infty_{loc}(D)$ since $\overline{u},\underline{u}\in\LLL \cap L^\infty_{loc}(D)$, so $(\KK v_n)_n$ are pointwisely bounded by Proposition \ref{p:continuity of GDf} and equicontinuous by Remark \ref{r:equicont GDf}. By Arzel\`{a}-Ascoli theorem, there is a subsequence $(\KK v_{n_k})_k$ of $(\KK v_n)_n$ which converges pointwisely to some $u\in C(D)\cap\LLL$.  Since $\KK v_n=\GDFI F_{v_n}$, Lemma \ref{l:rate of GDdelta} implies that that $\{\KK v_{n_k}:k\in\N\}$ is uniformly integrable with respect to the measure $\de(x)dx$ since $\{F_{v_{n_k}}:k\in\N\}$ is. However, $\KK v_{n_k}\to u$ pointwisely so by Vitali's theorem \cite[Theorem 16.6]{schilling2017measures} we have $\KK v_{n_k}\to u$ in $\LLL$.
		
		This means that $\KK$ is compact so by Schauder's fixed point theorem there is $u\in\LLL$ such that $\KK u =u$ in $D$, i.e. $u$ solves
		\begin{align*}
			\begin{cases}
				\Lo u(x)=F(x,u(x)),&\textrm{in $D$},\\
				\frac{u}{\PDFI\sigma}=0,&\textrm{on $\partial D$}.
			\end{cases}
		\end{align*}
		We need to prove that $\underline u \le u \le \overline u$ in $D$ which would mean that $u$ also solves \eqref{eq:semilinear sub super problem}. For this step we will use Kato's inequality. 
		
		More precisely, applying Proposition \ref{p:Kato's inequality for t+} to $w=u-\overline u=\GDFI\big(F_u-\overline h\big)$ we get
		\begin{align}
			(u-\overline u)^+\le\GDFI\left[\1_{\{u>\overline u\}}\cdot\big(F_u-\overline h\big)\right]\le \GDFI\left[\1_{\{u>\overline u\}}\cdot\big(f_{\overline u}-f_{\overline u}\big)\right]=0,
		\end{align}
		where the second inequality holds since $F(x,u(x))=f(x,\overline u(x))$ on $\{u\ge \overline u\}$ and since we assume $f(x,\overline u)\le \overline h$ a.e. in $D$. This means $u\le \overline u$ a.e. in $D$. Similarly we get that $\underline u\le u$ a.e. in $D$. Hence, we found a solution to the problem \eqref{eq:semilinear sub super problem}.
		
		\noindent
		\textit{Step 2: finding the maximal and the minimal solution.}		
		We adapt a method from \cite[Theorem 1.3]{DancerSweersMaximal} which uses Zorn's lemma.
		
		Let $\mathcal{P}\coloneqq\{u\in \LLL: \underline u\le u\le \overline u\text{ and $u$ solves \eqref{eq:semilinear sub super problem}}\}$. Let $\{u_i\}_{i\in\mathcal{I}}$ be a totally ordered subset of $\mathcal{P}$. Since $u_i\in\PP$ and since we have  $\rho\Lambda(|\underline u|\vee |\overline u|)\in \LLL \cap L^\infty_{loc}(D)$, it follows that $\{u_i\}_{i\in\mathcal{I}}$ is equicontinuous in $D$. In fact, by Remark \ref{r:equicont GDf} the set $\{u_i\}_{i\in\mathcal{I}}$ is equicontinuous on every compact subset of $D$. Hence, the function $u\coloneqq \sup_{i\in \II} u_i$ is continuous and $u$ can be approximated by $\{u_i\}_{i\in \II}$ uniformly on compact subsets of $D$. Moreover, $D$ is $\sigma$-compact so  we can choose an increasing sequence $(u_n)_n\subset \{u_i\}_{i\in\mathcal{I}}$ such that $\lim_n u_n(x)=u(x)$ for all $x\in D$.
		
		By the dominated convergence theorem, since $|f_{u_n}|\le\rho\Lambda(|\underline u|\vee |\overline u|)$,  it easily  follows  by the continuity of $f$ in the second variable that $u=\lim_n u_n=\lim_n\GDFI \big(f_{u_n}\big)=\GDFI \big(f_{u}\big),$ i.e. $u\in \PP$. Now Zorn's lemma implies that there exists the maximal solution $u_2$ of \eqref{eq:semilinear sub super problem}. 		We find the minimal solution $u_1$ 	in the same way.

		\noindent
		\textit{Step 3: continuity of solutions.} 
		We prove that every solution of \eqref{eq:semilinear sub super problem} with property $\underline{u}\le u \le \overline u$ is continuous up to the modification. Indeed, every solution satisfies $u=\GDFI f_u$ a.e. in $D$. Furthermore, since $\underline{u}\le u \le \overline u$ and $\rho\Lambda(|\underline u|\vee |\overline u| )\in \LLL \cap L^\infty_{loc}(D)$, we have $\GDFI f_u\in C(D)$ by Proposition \ref{p:continuity of GDf}. Finally, $\tilde u\coloneqq \GDFI f_u$ is a continuous modification of $u$, hence $f_u=f_{\tilde u}$ a.e. in $D$, hence $\tilde u= \GDFI f_{\tilde u}$ in $D$.
		
		\noindent 
		\textit{Step 4: uniqueness of solution.} In the case when $f$ is non-increasing in the second variable, uniqueness follows from Proposition \ref{p:uniqueness if nonincreasing}.
	\end{proof}

	By using the previous theorem, a  method of sub- and super-solutions and the approximation of harmonic functions,  we solve a semilinear problem  that \ deals with a non-positive nonlinearity $f$ and a non-negative boundary condition $\zeta$. Theorem \ref{t:t1 semilinear problem} generalizes \cite[Theorem 8]{aba+dupaig} to our setting of more general non-local operators. Moreover, we consider  a  more general boundary condition which can also be a measure,  whereas in \cite[Theorem 8]{aba+dupaig} only continuous functions  were  considered. The nonlinearity in our theorem is also slightly  more general  than  the one in  \cite[Theorem 8]{aba+dupaig}. A similar result in a slightly different non-local setting can be found in \cite[Theorem 3.10]{semilinear_bvw}.

	\begin{thm}\label{t:t1 semilinear problem}
		Let $f:D\times \R\to(-\infty,0]$ such that $f(x,0)=0$, $x\in D$, and such that $f$ satisfies \ref{F}. Further, let $\zeta\in \MM(\partial D)$ be a finite non-negative measure such that
		$$\rho\Lambda(\PDFI\zeta)\in\LLL.$$
		Then the problem
		\begin{align}\label{eq:semilinear non-positive problem}
			\begin{cases}
				\Lo u(x)=f(x,u(x)),&\textrm{in $D$},\\
				\frac{u}{\PDFI\sigma}=\zeta,&\textrm{on $\partial D$},
			\end{cases}
		\end{align}
		has a weak solution $u\in C(D) \cap \LLL$.
		
		Additionally, if $f$ is non-increasing in the second variable, the continuous weak solution of \eqref{eq:semilinear non-positive problem} is unique.
	\end{thm}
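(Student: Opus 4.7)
The strategy is to reduce the nonzero boundary problem to one with zero boundary data and then apply the sub/super-solution Theorem~\ref{t:super/subsolution implies existence}. Setting $v := u - \PDFI\zeta$, the function $u$ is a weak solution of~\eqref{eq:semilinear non-positive problem} if and only if $v$ is a weak solution of
\begin{equation*}
\Lo v = \tilde f(x, v) \text{ in } D, \qquad v/\PDFI\sigma = 0 \text{ on } \partial D,
\end{equation*}
where $\tilde f(x, t) := f(x, t + \PDFI\zeta(x)) \le 0$. This reformulation uses that $\PDFI\zeta$ is a continuous harmonic function lying in $\LLL \cap L^\infty_{loc}(D)$ (Theorem~\ref{t:PDFI harmonic} and Theorem~\ref{t:PD conti}), so the substitution is meaningful at the level of both the weak formulation and the regularity class.

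For the reduced problem I take the supersolution $\overline v \equiv 0$, corresponding to $\overline h \equiv 0$; the required inequality $\tilde f(x, 0) = f(x, \PDFI\zeta(x)) \le 0 = \overline h(x)$ is immediate from $f \le 0$. The natural candidate subsolution is $\underline v := -\GDFI[\rho\Lambda(\PDFI\zeta)]$, with $\underline h := -\rho\Lambda(\PDFI\zeta) \in \LLL$ by the standing hypothesis. Verifying $\underline h \le \tilde f(x, \underline v)$ via the bound $|f(x,t)| \le \rho(x)\Lambda(|t|)$ together with the monotonicity of $\Lambda$ reduces to the pointwise comparison $\GDFI[\rho\Lambda(\PDFI\zeta)](x) \le 2\PDFI\zeta(x)$ a.e.\ in $D$. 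If this comparison is not available directly, I would approximate $\zeta$ from below by an increasing sequence $\zeta_n \nearrow \zeta$ (for instance by truncating the measure or localizing its support away from $\partial D$) for which the comparison does hold, solve each approximate problem via Theorem~\ref{t:super/subsolution implies existence}, and then pass to the limit in the fixed-point identity $u_n = \PDFI\zeta_n + \GDFI f(\cdot, u_n)$ using the common $\LLL$-dominating function $\rho\Lambda(\PDFI\zeta)$, the dominated convergence theorem, and continuity of the Green potential (Proposition~\ref{p:continuity of GDf}).

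The limit $u$ satisfies $u = \PDFI\zeta + \GDFI f_u$ and lies in $C(D)\cap \LLL$ by Theorem~\ref{t:PD conti} together with Proposition~\ref{p:continuity of GDf}. The hard part is the subsolution verification: the pointwise estimate $\GDFI[\rho\Lambda(\PDFI\zeta)] \le 2\PDFI\zeta$ is delicate because the two sides exhibit very different boundary behaviour ($\PDFI\zeta$ blows up like $\PDFI\sigma$ while the Green potential of an $\LLL$ density is negligible compared to $\PDFI\sigma$ by Theorem~\ref{t:boundary of GDFIU} and Proposition~\ref{p:boundary operator on GDFi}), so the bound is plausible close to $\partial D$ but a clean pointwise statement in the interior will most likely require the approximation scheme sketched above. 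Finally, when $f$ is non-increasing in the second variable, uniqueness follows immediately from Proposition~\ref{p:uniqueness if nonincreasing}, whose proof is a direct application of Kato's inequality.
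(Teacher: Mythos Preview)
Your reduction $v=u-\PDFI\zeta$ is natural, but the scheme has a genuine gap at exactly the point you flag as ``the hard part''. The subsolution verification $\underline h\le\tilde f(\cdot,\underline v)$ boils down (as you say) to a pointwise inequality of the type $\GDFI[\rho\Lambda(\PDFI\zeta)]\le 2\PDFI\zeta$, and this simply need not hold: take $\zeta=\varepsilon\sigma$ with $\varepsilon$ small, $\rho\equiv1$, and $\Lambda$ bounded away from zero near the origin; then the right-hand side is small in the interior while the left-hand side is a fixed positive continuous function. Your fallback of approximating $\zeta$ by $\zeta_n\nearrow\zeta$ does not cure this, because both sides of the desired inequality move in the same direction, and ``localizing the support of $\zeta$ away from $\partial D$'' is vacuous since $\zeta$ lives on $\partial D$. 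A secondary issue is that $\tilde f(x,t)=f(x,t+\PDFI\zeta(x))$ does not obviously satisfy \ref{F} (the $x$-dependence inside $\Lambda$ does not factor), so invoking Theorem~\ref{t:super/subsolution implies existence} as stated would require reworking its proof rather than citing it.

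The paper sidesteps both difficulties by approximating differently: instead of perturbing $\zeta$, it approximates the \emph{harmonic function} $\PDFI\zeta$ from below by Green potentials $\GDFI\tilde f_k\uparrow\PDFI\zeta$ with $\tilde f_k\ge0$ bounded (such an approximation exists because $\PDFI\zeta$ is continuous and has the mean-value property with respect to $X$, cf.\ Theorems~\ref{t:PD conti} and~\ref{t:harmonic functions}). One then solves the zero-boundary problem $\Lo u=f(x,u)+\tilde f_k$; here the original $f$ still satisfies \ref{F}, the subsolution $\underline u=0$ works because $f(x,0)=0$ and $\tilde f_k\ge0$, and the supersolution $\overline u=\GDFI\tilde f_k$ works because $f\le0$. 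Both are bounded Green potentials, so Theorem~\ref{t:super/subsolution implies existence} applies without any delicate comparison. The resulting $u_k$ satisfy $0\le u_k\le\GDFI\tilde f_k\le\PDFI\zeta$, which gives the uniform majorant $|f_{u_k}|\le\rho\Lambda(\PDFI\zeta)\in\LLL$ needed for the limit. The key conceptual move you are missing is that the Poisson part should be absorbed into the \emph{right-hand side} of an approximate problem (as $\tilde f_k$), not subtracted from the unknown.
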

	
	\begin{proof}		
		Let $(\wt f_k)_k$ be a non-negative sequence of bounded functions such that $\GDFI \wt f_k \uparrow \PDFI \zeta$ in $D$. This sequence exists by \cite[Appendix A.1]{semilinear_bvw} since the semigroup $(Q_t^D)_t$ is strongly Feller, $\GDFI\de\asymp \de$ by Lemma \ref{l:rate of GDdelta}, and since $\PDFI\zeta$ is a continuous function with the mean-value property with respect to $X$, see Theorem \ref{t:PD conti} and Theorem \ref{t:harmonic functions}.
		
		We build a sequence of solutions to the following semilinear problems
		\begin{align}\label{eq:semi non-positive fk}
			\begin{cases}
				\Lo u(x)=f(x,u(x))+\wt f_k,&\textrm{in $D$},\\
				\frac{u}{\PDFI\sigma}=0,&\textrm{on $\partial D$}.
			\end{cases}
		\end{align}
		For every $k\in \N$,  a subsolution to \eqref{eq:semi non-positive fk} is $\underline u=0$ since $f(x,0)=0$ and since $\wt f_k\ge0$. A supersolution to \eqref{eq:semi non-positive fk} is $\overline u=\GDFI \wt f_k$ because $f$ is non-positive. Note that both $\underline u$ and $\overline u$ are bounded functions, so it is trivial to check that the assumptions of Theorem \ref{t:super/subsolution implies existence} are satisfied. Hence,  for every $k\in\N$ there is a solution $u_k\ge 0$ to \eqref{eq:semi non-positive fk} which is also continuous in $D$ and satisfies
		\begin{align}\label{eq:semi solut eq0}
			u_k=\GDFI f_{u_k}+\GDFI \wt f_k,\quad \text{in $D$.}
		\end{align}
		
		Now we find an appropriate subsequence of $(u_k)_k$ which converges to a solution of \eqref{eq:semilinear non-positive problem}. Since $\GDFI\wt f_k$ is continuous and increases to the continuous function $\PDFI\zeta$, by Dini's theorem the convergence is locally uniform so the usual $3\varepsilon$-argument gives equicontinuity of the family $(\GDFI \wt f_k)_k$. Also, since $|f_{u_k}|\le \rho\Lambda(\PDFI\zeta)$, equicontinuity of $(\GDFI(f_{u_k}))_k$ follows by Proposition \ref{p:continuity of GDf} and Remark \ref{r:equicont GDf}. Hence, Arzel\`{a}-Ascoli theorem gives us a subsequence, denoted again by $(u_k)_k$, which converges to a continuous function $u$.
		
		Now we show that $u$ is a solution of \eqref{eq:semilinear non-positive problem}. Obviously, since $u=\lim_{k\to\infty} u_k$ and $0\le u_k\le \GDFI \wt f_k\le \PDFI \zeta<\infty$, $u$ is non-negative and finite. Further, $\GDFI \wt f_k\uparrow \PDFI\zeta$, so we are left to prove that $\GDFI f_{u_k}\to \GDFI f_u$. However, this is easy since $|f_{u_k}|\le\rho\Lambda(\PDFI \zeta)$, so continuity of $f$ in the second variable and  the dominated convergence imply $\GDFI f_{u_k}\to \GDFI f_u$.

		Uniqueness, if $f$ is non-increasing in the second variable, follows from Proposition \ref{p:uniqueness if nonincreasing}.
	\end{proof}
	\begin{rem}\label{r:maximal solution}
		Applying Zorn's lemma argument from the proof of Theorem \ref{t:super/subsolution implies existence} we get that for the problem \eqref{eq:semilinear non-positive problem} there exists a minimal solution $u_1$ and a maximal solution $u_2$ such that for every solution $u$ of \eqref{eq:semilinear non-positive problem} we have
		$$0\le u_1\le u\le u_2\le \PDFI\zeta,\quad \text{in $D$.}$$
	\end{rem}
	We say that $\Lambda:[0,\infty)\to[0,\infty)$ satisfies the doubling condition if there exists $C>0$ such that
	\begin{align}\label{eq:doubling condition}
		\Lambda(2t)\le C \Lambda(t),\quad t\ge1.
	\end{align}
	If $\Lambda$ is non-decreasing, the condition \eqref{eq:doubling condition} implies that for every $c_1>1$ there is $c_2=c_2(C,c_1)>0$ such that
	\begin{align}\label{eq:doubling condition 1}
		\Lambda(c_1t)\le c_2 \Lambda(t),\quad t\ge1.
	\end{align}
	\begin{cor}\label{c:semilinear doubling condition}
		Let $f:D\times \R\to(-\infty,0]$ such that $f(x,0)=0$, $x\in D$. Let $f$ also satisfy \ref{F} such that $\Lambda$ satisfies the doubling condition \eqref{eq:doubling condition}.
		
		If $\rho\Lambda\left(\frac{1}{\de^2\phi(\de^{-2})}\right)\in \LLL$, then the problem
		\begin{align}\label{eq:semilinear non-positive continuous problem}
			\begin{cases}
				\Lo u(x)=f(x,u(x)),&\textrm{in $D$},\\
				\frac{u}{\PDFI\sigma}=\zeta,&\textrm{on $\partial D$},
			\end{cases}
		\end{align}
		has a continuous weak solution for every non-negative function $\zeta\in C(D)$. Additionally, if $f$ is non-increasing in the second variable, the continuous weak solution is unique.
		
		In particular, if $f(x,t)=-|t|^p$, then the equation \eqref{eq:semilinear non-positive continuous problem} has a unique continuous weak solution for $p<\frac{1}{1-\delta_1}$, where $\delta_1$ comes from \eqref{eq:scaling condition}.
	\end{cor}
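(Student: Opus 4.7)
The plan is to deduce the result from Theorem \ref{t:t1 semilinear problem}, the only task being to upgrade the standing hypothesis $\rho\Lambda(1/(\de^2\phi(\de^{-2})))\in\LLL$ to the verification $\rho\Lambda(\PDFI\zeta)\in\LLL$ required by that theorem. Since $\zeta\in C(\partial D)$ is non-negative and bounded by some $M$, I would first observe that $\PDFI\zeta\le M\,\PDFI\sigma$, and then invoke the sharp bound $\PDFI\sigma(x)\asymp 1/(\de(x)^2\phi(\de(x)^{-2}))$ from Lemma \ref{l:Poisson integral sharp bounds}, producing a pointwise majorant for $\PDFI\zeta$ in terms of the quantity appearing in the standing hypothesis.

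Next, I would use the upper scaling in \ref{A1} (with $\delta_2<1$) to note that $1/(\de^2\phi(\de^{-2}))$ tends to $+\infty$ as $\de\to 0$, so near the boundary this quantity is at least $1$, and the doubling condition in the form \eqref{eq:doubling condition 1} can absorb any multiplicative constant. This would give $\Lambda(\PDFI\zeta)\lesssim \Lambda(1/(\de^2\phi(\de^{-2})))$ on a boundary strip $\{\de\le\eta\}$; away from the boundary everything in sight is bounded, by monotonicity of $\Lambda$ and local boundedness of $\rho$, so the integrability there is automatic. Combining the two regimes with the standing hypothesis yields $\rho\Lambda(\PDFI\zeta)\in\LLL$, and Theorem \ref{t:t1 semilinear problem} delivers both existence of a continuous weak solution and uniqueness under monotonicity of $f$ in the second variable.

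The last assertion, concerning $f(x,t)=-|t|^p$, reduces to checking that the integrability hypothesis $\int_D\de(x)^{1-2p}\phi(\de(x)^{-2})^{-p}\,dx<\infty$ is equivalent to $p<1/(1-\delta_1)$. I would apply the lower scaling from \ref{A1} to obtain $\phi(\de(x)^{-2})\gtrsim \de(x)^{-2\delta_1}$ for $\de(x)\le 1$, which reduces the integrand near $\partial D$ to a constant multiple of $\de(x)^{1-2p(1-\delta_1)}$; the standard fact that $\de^{\alpha}\in L^1(D)$ iff $\alpha>-1$ on a bounded $C^{1,1}$ domain then pins down the critical exponent. For uniqueness in this case, note that the maximal and minimal solutions from Remark \ref{r:maximal solution} are non-negative and $t\mapsto -t^p$ is non-increasing on $[0,\infty)$, so the Kato-based uniqueness argument of Proposition \ref{p:uniqueness if nonincreasing} applied to non-negative solutions closes the gap.

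I do not foresee any major obstacle; the only point requiring care is the interplay between the doubling condition on $\Lambda$ (valid only for arguments at least $1$) and the constant multiplicative factor in the pointwise bound on $\PDFI\zeta$, which forces the boundary-strip split described above.
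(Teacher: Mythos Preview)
Your proposal is correct and follows essentially the same route as the paper: bound $\PDFI\zeta\le M\,\PDFI\sigma\asymp M/(\de^2\phi(\de^{-2}))$ via Lemma \ref{l:Poisson integral sharp bounds}, absorb the constant with the doubling condition, and invoke Theorem \ref{t:t1 semilinear problem}; for the power case, use the lower scaling to reduce to $\int_0^1 t^{1-2p(1-\delta_1)}\,dt<\infty$. You are in fact more careful than the paper on two points it glosses over: the doubling condition \eqref{eq:doubling condition} is stated only for arguments $\ge 1$, which you handle by the boundary-strip split, and the nonlinearity $t\mapsto -|t|^p$ is not globally non-increasing, which you circumvent by appealing to Remark \ref{r:maximal solution} to confine all solutions to $[0,\PDFI\zeta]$ before running the Kato argument. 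One small wording issue: you say the integrability hypothesis is ``equivalent to'' $p<1/(1-\delta_1)$, but only sufficiency is asserted (and needed); your actual argument correctly proves sufficiency.
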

	\begin{proof}
		Note that for $\zeta\in C(D)$ we have $\PDFI\zeta \le c_1 \frac{1}{\de^2\phi(\de^{-2})}$ by Lemma \ref{l:Poisson integral sharp bounds} since $\zeta$ is bounded on $\partial D$. Thus, from the doubling condition we have
		$$\rho\Lambda(\PDFI\zeta)\le c_2\rho\Lambda\left(\frac{1}{\de^2\phi(\de^{-2})}\right)\in \LLL$$
		so we can apply Theorem \ref{t:t1 semilinear problem} to get the claim.
		
		In the special case $f(x,t)=-|t|^p$ we have $\rho\equiv 1$ and $\Lambda(t)=t^p$ so \eqref{eq:scaling condition} and the reduction to the flat case give us
		$$\int_D\rho\Lambda\left(\frac{1}{\de^2\phi(\de^{-2})}\right)\de dx\asymp \int_D \frac{\de}{\de^{2p}\phi(\de^{-2})^{2p}}dx\lesssim \int_0^1 t^{1-2p+2p\delta_1}dt$$
		which is finite if $p<\frac{1}{1-\delta_1}$.
	\end{proof}
	\begin{rem}\label{r:semilin negative rem}
		Assume that we are in the spectral fractional Laplacian case in the previous corollary, i.e. if $\phi(\lambda)=\lambda^{s}$, for some $s\in(0,1)$. Then we can find a solution of \eqref{eq:semilinear non-positive continuous problem} for $f(x,t)=-|t|^p$ and for every non-negative $\zeta\in C(\partial D)$ if $p<\frac{1}{1-s}$ since $\delta_1=s$ in this case.
		
		Conversely, if $f(x,t)=-|t|^p$ for $p\ge \frac{1}{1-s}$, and we additionally demand that the boundary condition holds pointwisely for a non-negative $\zeta\in C(\partial D)$ such that $\zeta\not\equiv 0$, then the problem \eqref{eq:semilinear non-positive continuous problem} does not have a solution. Indeed, assume that $u$ is a solution to \eqref{eq:semilinear non-positive continuous problem} and that the boundary condition holds pointwisely. Then $u\gtrsim \de^{2s-2}$ near $z\in \partial D$ such that $\zeta(z)>0$ since $\PDFI\zeta\asymp \de^{2s-2}$ near such $z$, see Proposition \ref{p:boundary oper. of Poisson}. Thus, $|u|^p\not\in\LLL$ since $p\ge\frac{1}{1-s}$, i.e. $\GDFI f_u=\infty$ in $D$ by Lemma \ref{l:rate of GDdelta}, which is a contradiction.
	\end{rem}
	
	One of the weaknesses of Theorem \ref{t:super/subsolution implies existence} is that one has to have a supersolution and a subsolution which are strictly Green potentials, i.e. a supersolution and a subsolution cannot consist of Poisson integrals which are annulled by $\Lo$, since only then we may use Kato's inequality \eqref{eq:Kato's inequality with GDFI}. However, in some cases we can exploit some other methods for obtaining a solution to a semilinear problem. For example, in the next theorem we deal with a non-negative nonlinearity $f$ and a non-negative boundary condition $\zeta$ and we use a method of monotone iterations to obtain a solution.
	\begin{thm}\label{t:semilin non-negative monotone linearity}
		Let $f:D\times \R\to[0,\infty)$ satisfy \ref{F}, and let $f$ be a non-decreasing function in the second variable. Let $\zeta$ be a non-negative finite measure on $\partial D$ such that
		\begin{align}\label{eq:semi condition nondecreasing}
			\GDFI\big(\rho\Lambda(2\PDFI \zeta)\big)\le \PDFI\zeta,\quad \text{in $D$.}
		\end{align}
		 Then  there is a continuous non-negative solution to 
		\begin{align}\label{eq:semilinear non-nonegative}
			\begin{cases}
				\Lo u(x)=f(x,u(x)),&\textrm{in $D$},\\
				\frac{u}{\PDFI\sigma}=\zeta,&\textrm{on $\partial D$}.
			\end{cases}
		\end{align}
	\end{thm}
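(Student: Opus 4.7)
The natural approach is a monotone iteration anchored at the Poisson integral. Set $u_0 \coloneqq \PDFI\zeta$ and, for $n\ge 0$, define
\[
 u_{n+1} \coloneqq \GDFI f_{u_n} + \PDFI\zeta.
\]
First I would check that the iteration is well posed and that the sequence is non-decreasing. Since $f\ge 0$ and $\zeta\ge 0$, one has $u_1 \ge \PDFI\zeta = u_0 \ge 0$. By \ref{F} and the monotonicity of $f$ in the second variable, if $u_{n-1}\le u_n$ then $f_{u_{n-1}}\le f_{u_n}$, hence $u_n\le u_{n+1}$; so $(u_n)_n$ is pointwise non-decreasing by induction.

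Next I would establish the uniform upper bound $u_n\le 2\PDFI\zeta$ by induction. The base case is obvious. For the inductive step, assuming $u_n\le 2\PDFI\zeta$, the monotonicity of $\Lambda$ together with \ref{F} gives $f_{u_n}\le \rho\,\Lambda(|u_n|)\le \rho\,\Lambda(2\PDFI\zeta)$, so by monotonicity of $\GDFI$ and the hypothesis \eqref{eq:semi condition nondecreasing},
\[
 u_{n+1} \;=\; \GDFI f_{u_n} + \PDFI\zeta \;\le\; \GDFI\big(\rho\,\Lambda(2\PDFI\zeta)\big) + \PDFI\zeta \;\le\; 2\PDFI\zeta.
\]
In particular, $\rho\,\Lambda(2\PDFI\zeta)\in\LLL$: testing $\GDFI\big(\rho\,\Lambda(2\PDFI\zeta)\big)\le \PDFI\zeta$ against $\varphi_1\asymp\de$ and using \eqref{eq:LGD fi = - fi} together with Fubini shows the integral $\int_D \rho\,\Lambda(2\PDFI\zeta)\,\varphi_1\,dx$ is finite, so the iteration sits inside $\LLL$.

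Now I would pass to the limit. Define $u(x)\coloneqq \lim_{n\to\infty} u_n(x)\le 2\PDFI\zeta(x)$, which is finite a.e.\ in $D$. Since $f$ is continuous in the second variable, $f_{u_n}\uparrow f_u$ pointwise; since $f_{u_n}\le \rho\,\Lambda(2\PDFI\zeta)\in\LLL$, the monotone convergence theorem combined with Lemma~\ref{l:rate of GDdelta} yields $\GDFI f_{u_n}\uparrow \GDFI f_u$ in $\LLL$ (and pointwise a.e.). Taking the limit in $u_{n+1}=\GDFI f_{u_n}+\PDFI\zeta$ produces
\[
 u \;=\; \GDFI f_u + \PDFI\zeta \quad\text{a.e.\ in }D,
\]
which is precisely the integral identity characterizing weak solutions of \eqref{eq:semilinear non-nonegative}, as noted after Definition~\ref{d:semilinear problem}.

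Finally, for continuity I would argue as in Step~3 of Theorem~\ref{t:super/subsolution implies existence}. Since $0\le u\le 2\PDFI\zeta$ and $\PDFI\zeta$ is locally bounded in $D$ by Theorem~\ref{t:PD conti}, \ref{F} gives $f_u \le \rho\,\Lambda(2\PDFI\zeta)\in \LLL\cap L^\infty_{\mathrm{loc}}(D)$. Proposition~\ref{p:continuity of GDf} then implies $\GDFI f_u\in C(D)$, while $\PDFI\zeta\in C(D)$ by Theorem~\ref{t:PD conti}, so after replacing $u$ by its continuous representative $\GDFI f_u+\PDFI\zeta$ we obtain a continuous non-negative weak solution. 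The only delicate point in this scheme is keeping the iterates trapped between $\PDFI\zeta$ and $2\PDFI\zeta$; this is exactly what the a priori hypothesis \eqref{eq:semi condition nondecreasing} is tailored to guarantee, and it also supplies the dominating function needed for the monotone/dominated convergence arguments.
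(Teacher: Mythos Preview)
Your proof is correct and follows essentially the same monotone iteration scheme as the paper. The only cosmetic difference is the starting point: you take $u_0=\PDFI\zeta$ whereas the paper takes $u_0=0$; both choices give an increasing sequence bounded above by $2\PDFI\zeta$ via \eqref{eq:semi condition nondecreasing}, and the passage to the limit and the continuity argument are identical.
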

	\begin{proof}
		We use a method of monotone iterations. Let $u_0=0$, and define for $n\ge 1$
		\begin{align*}
			u_{n}=\GDFI\big(f_{u_{n-1}}\big)+\PDFI\zeta.
		\end{align*}
		Since $f$ is non-negative and non-decreasing in the second variable, it follows that $(u_n)_n$ is non-negative and  non-decreasing, too. However, by induction it is easy to see that $0\le u_n\le 2\PDFI \zeta$. Indeed, for $u_0$ this fact is trivial, and for $n\ge 1$ by \eqref{eq:semi condition nondecreasing} we have
		\begin{align*}
			u_n=\GDFI\big(f_{u_{n-1}}\big)+\PDFI\zeta\le  \GDFI\big(\rho\Lambda(2\PDFI \zeta)\big)+\PDFI\zeta\le 2\PDFI\zeta.
		\end{align*}
		This means that $u=\uparrow \lim_{n\to\infty}u_n$ is well defined. Since $f$ is continuous in the second variable by \ref{F} and since the integrability condition \eqref{eq:semi condition nondecreasing} holds, by the dominated convergence theorem we get
		$$ u=\GDFI f_u+\PDFI\zeta,$$
		i.e. we found a solution to \eqref{eq:semilinear non-nonegative}. 
		
		For the continuity of $u$, note that since $u\le 2\PDFI\zeta$,  the condition \eqref{eq:semi condition nondecreasing} implies that $f_u\in\LLL$ in the following way
		$$\int_D f_u(x)\de(x) dx\asymp\int_D f_u(x)\GDFI\de(x) dx=\int_D\GDFI(f_u)(x)\de(x)\le \int_D \PDFI\zeta(x)\de(x)dx<\infty.$$
		 Also, the bound $u\le 2\PDFI\zeta$ implies $f_u\in L_{loc}^\infty(D)$. 
		Now Proposition \ref{p:continuity of GDf} and Theorem \ref{t:PD conti} give $u\in C(D)$.
	\end{proof}
	
	\begin{rem}\label{r:semilin positive rem}
		If we are in the spectral fractional Laplacian case in the previous theorem, i.e. if $\phi(\lambda)=\lambda^{s}$, for some $s\in(0,1)$, then there exists a solution of \eqref{eq:semilinear non-nonegative} for any non-negative $\zeta\in C(\partial D)$ and for the nonlinearity $f(x,t)=m|t|^p$, where  $m>0$ is sufficiently small and $p<\frac{1}{1-s}$. Indeed, in this case $\PDFI\zeta \lesssim \de(x)^{2-2s}$, and $(\PDFI\zeta)^p\in \LLL$ if $p<\frac{1}{1-s}$. Obviously, we chose the parameter $m>0$ so small so that \eqref{eq:semi condition nondecreasing} holds.
		
		Conversely, if $p\ge \frac{1}{1-s}$, then the problem  \eqref{eq:semilinear non-nonegative} does not have a solution for  $f(x,t)=m|t|^p$ for any $m>0$ and for any non-negative $\zeta\in C(\partial D)$ such that $\zeta\not\equiv 0$. Indeed, assume that $u$ solves \eqref{eq:semilinear non-nonegative}. Then $u\ge \PDFI\zeta$ since $f\ge0$ and $\PDFI\zeta\gtrsim \de^{2-2s}$, near $z\in \partial D$ such that $\zeta(z)>0$, see Proposition \ref{p:boundary oper. of Poisson}. Hence for $p\ge \frac{1}{1-s}$ the function $(\PDFI\zeta)^p\not\in \LLL$ which implies $u=\GDFI f_u+\PDFI\gtrsim \GDFI\left((\PDFI\zeta)^p\right)=\infty$ in $D$, by Lemma \ref{l:rate of GDdelta}.
	\end{rem}
	
	To obtain a solution to a semilinear problem with an unsigned nonlinearity $f$ and an unsigned boundary condition $\zeta$ we need some stronger assumptions on the nonlinearity $f$. The following theorem is in  the  spirit same as \cite[Theorem 2.4]{bogdan_et_al_19} and \cite[Corollary 3.8]{semilinear_bvw} which were proved in a different non-local setting.
	\begin{thm}\label{t:semilinear signed datum}
		Let $f:D\times \R\to \R$ satisfy \ref{F} and let $\zeta$ be a finite measure on $\partial D$. Assume that $\GDFI\rho\in C_0(D)$ and $\GDFI\big(\rho\Lambda(2\PDFI|\zeta|)\big)\in C_0(D)$. Assume additionally that: (a) $\Lambda$ is sublinearly increasing, i.e. $\lim_{t\to\infty}\Lambda(t)/t=0$, or  (b) $m>0$ is sufficiently small. Then the semilinear problem
		\begin{align}\label{eq:semilinear bogdan}
			\begin{cases}
				\Lo u(x)=m\,f(x,u(x)),&\textrm{in $D$},\\
				\frac{u}{\PDFI\sigma}=\zeta,&\textrm{on $\partial D$}.
			\end{cases}
		\end{align}
		has a weak continuous solution $u$ such that $|u|\le C+\PDFI|\zeta|$, for some constant $C\ge0$.
		
		If, in addition, $f$ is non-increasing in the second variable, $u$ is a unique weak solution to \eqref{eq:semilinear bogdan}.
	\end{thm}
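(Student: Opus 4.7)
The plan is to realize the solution as $u = w + \PDFI\zeta$, where the bounded continuous part $w$ is produced via Schauder's fixed point theorem applied to the operator
\[
T w \coloneqq \GDFI\big(m\, f_{w + \PDFI\zeta}\big),
\]
on a closed ball $B_C = \{w \in C_0(D) : \|w\|_\infty \le C\}$ in the Banach space of continuous functions on $D$ vanishing at $\partial D$. By \eqref{eq:solution semilinear}, any fixed point $w = Tw$ yields a weak solution $u = w + \PDFI\zeta$ of \eqref{eq:semilinear bogdan}, so the whole task reduces to finding such a $w$ for suitable $C$.

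For the self-mapping step I use the elementary inequality $\Lambda(a+b) \le \Lambda(2a) + \Lambda(2b)$ for $a,b \ge 0$, valid because $\Lambda$ is non-decreasing and $a+b \le 2\max(a,b)$. Combined with \ref{F} this gives, for $w \in B_C$,
\[
|Tw| \le m\, \GDFI\big(\rho\, \Lambda(C + \PDFI|\zeta|)\big) \le m\, \Lambda(2C)\, \GDFI\rho + m\, \GDFI\big(\rho\, \Lambda(2\PDFI|\zeta|)\big) \eqqcolon g_C.
\]
By the standing hypotheses $g_C \in C_0(D)$, so $Tw \in C_0(D)$ with $\|Tw\|_\infty \le mA\Lambda(2C) + mB$, where $A = \|\GDFI\rho\|_\infty$ and $B = \|\GDFI(\rho\Lambda(2\PDFI|\zeta|))\|_\infty$. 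Under hypothesis (a), sublinearity $\Lambda(2C)/C \to 0$ makes the right-hand side at most $C$ for all $C$ large enough; under (b), a fixed $C$ together with a sufficiently small $m$ accomplish the same. Either way, $T(B_C) \subset B_C$. Continuity of $T$ on $B_C$ will follow from dominated convergence: if $w_n \to w$ uniformly, then $f_{w_n+\PDFI\zeta}\to f_{w+\PDFI\zeta}$ pointwise by \ref{F}, uniformly dominated by $\rho\Lambda(C + \PDFI|\zeta|) \in \LLL$ (the $\LLL$-bound coming from the fact that $\GDFI$ applied to it is bounded by $g_C$, which via Lemma \ref{l:rate of GDdelta} and Fubini forces the integrand into $\LLL$). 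Hence $Tw_n \to Tw$ pointwise, and the common $C_0(D)$-envelope $g_C$ upgrades this to uniform convergence. For compactness of $T(B_C)$, the shared $\LLL \cap L^\infty_{loc}(D)$-majorant $\rho\Lambda(C + \PDFI|\zeta|)$ combined with Proposition \ref{p:continuity of GDf} and Remark \ref{r:equicont GDf} yields equicontinuity on compact subsets of $D$; together with the equi-smallness near $\partial D$ provided by $g_C \in C_0(D)$, an Arzel\`a--Ascoli argument delivers relative compactness in $C_0(D)$.

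Schauder's fixed point theorem then produces $w \in B_C$ with $Tw = w$; setting $u = w + \PDFI\zeta$ yields a weak continuous solution of \eqref{eq:semilinear bogdan} satisfying $|u| \le C + \PDFI|\zeta|$. When $f$ is non-increasing in the second variable, uniqueness is immediate from Proposition \ref{p:uniqueness if nonincreasing}. The main difficulty I anticipate is the self-mapping step: hypotheses (a) and (b) are precisely calibrated so that the scalar inequality $mA\Lambda(2C) + mB \le C$ admits a solution in $C$, and the splitting $\Lambda(a+b) \le \Lambda(2a) + \Lambda(2b)$ is what decouples the unknown amplitude $\|w\|_\infty$ from the fixed boundary datum $\PDFI|\zeta|$, rendering the argument robust enough to accommodate signed data.
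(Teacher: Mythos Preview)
Your proposal is correct and follows essentially the same route as the paper: define $T$ on $C_0(D)$ by $Tw=\GDFI(m\,f_{w+\PDFI\zeta})$, use the splitting $\Lambda(a+b)\le\Lambda(2a)+\Lambda(2b)$ to get the self-map inequality $\|Tw\|_\infty\le m\|\GDFI\rho\|_\infty\Lambda(2C)+m\|\GDFI(\rho\Lambda(2\PDFI|\zeta|))\|_\infty$, solve for $C$ under (a) or (b), and conclude via Arzel\`a--Ascoli and Schauder. Your justification of the $\LLL$-bound on the majorant (via $\GDFI\de\asymp\de$ and Fubini) and the explicit mention of equi-smallness near $\partial D$ from the $C_0(D)$-envelope are in fact slightly more detailed than the paper's version, which refers out for continuity of $T$.
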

	\begin{proof}
		The proof follows the proof of \cite[Theorem 2.4]{bogdan_et_al_19} and we repeat the main steps for the reader's convenience.
		
		Define the operator $T$ on $C_0(D)$ by
		\begin{align*}
			T v(x)=\int_D\GDFI(x,y)m\,f(y,v(y)+\PDFI\zeta)dy,\quad v\in C_0(D),\; x\in D.
		\end{align*}
		Our goal is to get a fixed point of the operator $T$ from which we will extract a solution to \eqref{eq:semilinear bogdan}.
		
		Let $r_\rho=\sup_{x\in D}\GDFI\rho(x)<\infty$ and $r_\zeta=\sup_{x\in D}\GDFI\big(\rho\Lambda(2\PDFI|\zeta|)\big)(x)<\infty$. Let $C\ge 0$ and define $K\coloneqq\{v\in C_0(D):\|v\|_\infty\le C\}$. It is easy to show that for $a,b>0$ we have $\Lambda(a+b)\le \Lambda(2a)+\Lambda(2b)$. Hence,
		\begin{align*}
			|f(y,v(y)+\PDFI\zeta(y))|\le \rho(y)\Lambda(|v(y)|+\PDFI|\zeta|(y))\le \rho(y)\Lambda(2C)+\rho\Lambda(2\PDFI|\zeta|(y)),\quad v\in K,
		\end{align*}
		so  $T v\in C_0(D)$ by the upper bound and the same calculations as in Proposition \ref{p:continuity of GDf}. Moreover,
		\begin{align*}
			\|T v\|_\infty&=\sup_{x\in D}|\int_D\GDFI(x,y)mf(y,v(y)+\PDFI\zeta(y))dy|\\
			&\le \sup_{x\in D}\int_D\GDFI(x,y)m\big(\rho(y)\Lambda(2C)+\rho\Lambda(2\PDFI|\zeta|(y))\big)dy\le m\big(r_\rho\Lambda(2C)+r_\zeta\big).
		\end{align*}
		If $m$ is sufficiently small or $\Lambda$ sublinearly increases, there is $C>0$ such that $m\big(r_\rho\Lambda(2C)+r_\zeta\big)\le C$. Fix this $C$.
		We will now use Schauder's fixed point theorem on $T$. By the choice of $C$, we have $T[K]\subset K$. Also, $T$ is a continuous operator on $K$. This is proved by assuming the opposite  as in the proof of \cite[Theorem 3.6 (iii)]{semilinear_bvw} for the operator defined in equation (3.14) therein, see also \cite[Eq. (3.15)]{semilinear_bvw}. Further, the family $\{Tv:v\in K\}$ is equicontinuous in $D$ by the inequality
		\begin{align*}
			|Tv(x)-Tv(\xi)|\le \int_D|\GDFI(x,y)-\GDFI(\xi,y)|m\big(\rho(y)\Lambda(2C)+\rho\Lambda(2\PDFI|\zeta|(y))\big)dy,\quad v\in K,
		\end{align*}
		and by the Remark \ref{r:equicont GDf}. Arzel\`{a}-Ascoli theorem implies that $T[K]$ is precompact in $K$, thus, by Schauder's fixed point theorem there  exists  $u_0\in K$ such that $Tu_0=u_0$. To finish the proof, notice that the function $$u(x)\coloneqq u_0(x)+\PDFI\zeta(x)=\int_D\GDFI(x,y)mf(y,u(y))dy+\PDFI\zeta(x)$$
		solves \eqref{eq:semilinear bogdan}, and it holds that $u\in C(D)$ and  $|u|\le C+\PDFI|\zeta|$.		
	\end{proof}
	
	\begin{rem}\label{r:semilin bog rem}
		In the spectral fractional case where $\phi(\lambda)=\lambda^s$, for some $s\in (0,1)$, when $\zeta\in C(\partial D)$, we have a solution of \eqref{eq:semilinear bogdan} for the nonlinearity $f$ which satisfies $|f(x,t)|\lesssim |t|^p$ if $p<\frac{s}{1-s}$. Indeed, in that case $\PDFI|\zeta|\lesssim \de^{2s-2}$, hence $(\PDFI|\zeta|)^p\in\LLL$ and  $\GDFI\left((\PDFI|\zeta|)^p\right)\in C_0(D)$ by Theorem \ref{t:boundary of GDFIU}, or see \cite[Proposition 7]{dhifli2012}. Note that the range $p<\frac{s}{1-s}$ is worse  than  the one for Corollary \ref{c:semilinear doubling condition} and Theorem \ref{t:semilin non-negative monotone linearity}, see Remarks \ref{r:semilin negative rem} and \ref{r:semilin positive rem}.
	\end{rem}
	
	%
	%
	%
	%
	%
	%
	%
	\appendix
	\section{Appendix}\label{s:appendix}
	
	\subsection{Green function estimate}
	\begin{lem}\label{apx:l:Green fun est}
		Under assumption \ref{A1} it holds that 
		\begin{align}\label{apx:eq:Green function sharp estimate}
			\GDfi(x,y)\asymp \left(\frac{\de(x)\de(y)}{|x-y|^{2}}\wedge 1\right)\frac{1}{|x-y|^{d}\phi(|x-y|^{-2})},\quad x,y\in D,
		\end{align}
		where the constant of comparability depends only on $d$, $D$ and $\phi$.
	\end{lem}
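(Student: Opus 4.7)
The plan is to start from the defining integral
\[
\GDfi(x,y)=\int_0^\infty p_D(t,x,y)\,\uu(t)\,dt
\]
and split it as $\int_0^{T_0}+\int_{T_0}^\infty$, where $T_0=T_0(D)$ is the constant from the heat kernel estimate \eqref{eq:heat kernel estimate}. On the interval $(0,T_0]$ we have two-sided sharp estimates both for $p_D(t,x,y)$ (from \eqref{eq:heat kernel estimate}) and for the potential density $\uu(t)$ (from \eqref{eq:potent density upper bound} with $M=T_0$). Using these simultaneously and then performing the substitution $s=\phi(t^{-1})$, aided by \eqref{eq:scaling and the derivative}, reduces the main piece to an elementary computation that yields
\[
\int_0^{T_0}p_D(t,x,y)\uu(t)\,dt\asymp\left(\frac{\de(x)\de(y)}{|x-y|^2}\wedge 1\right)\frac{1}{|x-y|^{d}\phi(|x-y|^{-2})}.
\]
This is exactly the calculation carried out in \cite[Theorem 3.1]{ksv_minimal2016}; the weak scaling condition \ref{A1}, together with the fact that $|x-y|\le\diam D$, is all that is required to push the computation through, so I will simply import it here.

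The genuinely new part, and the main obstacle, is controlling the tail $\int_{T_0}^\infty p_D(t,x,y)\,\uu(t)\,dt$ without invoking transience (assumption (A5) of \cite{ksv_minimal2016}). The key observation is that since $D$ is bounded, the heat semigroup on $D$ decays exponentially. Concretely, by the semigroup property, for $t\ge T_0$,
\[
p_D(t,x,y)=\int_D p_D(t-T_0/2,x,z)\,p_D(T_0/2,z,y)\,dz,
\]
and by the upper bound in \eqref{eq:heat kernel estimate} we have $p_D(T_0/2,z,y)\le c_1\de(z)\de(y)$, where $c_1=c_1(D)$. Combining this with $\varphi_1\asymp\de$ (see \eqref{eq:varphi=de sharp estimate}) and with $P_s^D\varphi_1=e^{-\lambda_1 s}\varphi_1$, we obtain
\[
p_D(t,x,y)\le c_2\,\de(y)\,P_{t-T_0/2}^D\varphi_1(x)\le c_3\,e^{-\lambda_1(t-T_0/2)}\,\de(x)\de(y),\qquad t\ge T_0.
\]
Since $\uu$ is decreasing and therefore $\uu(t)\le\uu(T_0)<\infty$ on $[T_0,\infty)$, integrating against the exponentially decaying upper bound yields
\[
\int_{T_0}^\infty p_D(t,x,y)\uu(t)\,dt\le c_4\,\de(x)\de(y).
\]

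Finally, I would absorb this tail into the target estimate: because $|x-y|\le\diam D$ and $\phi$ is increasing,
\[
\frac{1}{|x-y|^d\phi(|x-y|^{-2})}\gtrsim \frac{1}{(\diam D)^{d+2}\phi((\diam D)^{-2})},
\]
so $\de(x)\de(y)\lesssim\bigl(\frac{\de(x)\de(y)}{|x-y|^2}\wedge 1\bigr)\frac{1}{|x-y|^d\phi(|x-y|^{-2})}$ on $D\times D$ with a constant depending only on $d$, $D$, and $\phi$. Consequently the tail contribution is dominated by the main term and does not affect the upper bound. The lower bound needs no tail argument since the integrand is non-negative; it is already provided by the estimate on $(0,T_0]$. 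This yields \eqref{apx:eq:Green function sharp estimate}.
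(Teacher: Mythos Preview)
Your proposal is correct and follows essentially the same strategy as the paper: both arguments isolate a large-time tail of the defining integral, control it via the exponential decay $p_D(t,x,y)\lesssim e^{-\lambda_1 t}\de(x)\de(y)$ on bounded domains (the paper cites \cite[Theorem 4.2.5]{davies_HeatKernel}, you derive it from the semigroup property and $\varphi_1\asymp\de$), and then absorb the resulting $\de(x)\de(y)$ term into the target estimate exactly as you do. The only cosmetic difference is that the paper uses a three-way split at $|x-y|^2$ and $(2\diam D)^2$ and gives an explicit WSC-based bound for the middle piece $I_2$, whereas you split once at $T_0$ and defer the bounded-interval computation to \cite{ksv_minimal2016}; note, though, that the calculation in \cite[Theorem 3.1]{ksv_minimal2016} is over $(0,\infty)$ and invokes (A5), so your ``import'' still requires the minor re-computation on $(0,T_0]$ that the paper makes explicit.
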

	\begin{proof}
		We slightly modify the proof of \cite[Theorem 3.1]{ksv_minimal2016} where the claim was proved under assumptions (A1)-(A5) from \cite{ksv_minimal2016}. Since \ref{A1} implies (A1)-(A4) from \cite{ksv_minimal2016}, we show that assumption (A5), which assumes that $\int_0^1\phi(\lambda)^{-1}d\lambda<\infty$, can be dropped in our setting. To shorten the proof, we note that every constant of comparability in the proof will depend at most on $d$, $D$ and $\phi$.

		The lower bound proved in \cite{ksv_minimal2016} does not use (A5) so we need to modify just the calculations for the upper bound.
		
		Similarly as in \cite{ksv_minimal2016}, let us define
		\begin{align*}
			I_1(r)&\coloneqq\int_0^{r^2}\left(\frac{\de(x)\de(y)}{t}\wedge 1\right)t^{-d/2}e^{-\frac{c\, r^2}{t}}\uu(t)dt,\\
			I_2(r)&\coloneqq\int_{r^2}^{(2\diam D)^2}\left(\frac{\de(x)\de(y)}{t}\wedge 1\right)t^{-d/2}e^{-\frac{c\, r^2}{t}}\uu(t)dt,\\
			L&\coloneqq \int_{(2\diam D)^2}^\infty e^{-\lambda_1 t}\de(x)\de(y)\uu(t)dt,
		\end{align*}
		where $\lambda_1$ is the first eigenvalue of $-\left.\Delta\right\vert_{D}$, see Subsection \ref{ss:operator}, and the constant $c$ is the constant $ c_4$ from \eqref{eq:heat kernel estimate}. In addition to the bounds \eqref{eq:heat kernel estimate}, there  is another one for all big enough $t>0$:
		$$ p_D(t,x,y)\asymp e^{-\lambda_1 t}\varphi_1(x)\varphi_1(y)\overset{\eqref{eq:varphi=de sharp estimate}}{\asymp}e^{-\lambda_1 t}\de(x)\de(y),\quad x,y\in D,\,t\ge \diam D,$$
		see \cite[Theorem 4.2.5]{davies_HeatKernel} and \cite[Remark 3.3]{song_sharp_bounds_2004}. Hence,
		\begin{align}
			\GDFI(x,y)&=\int_0^\infty p_D(t,x,y)\uu(t)dt=\left(\int_0^{|x-y|^2}+\int_{|x-y|^2}^{(2\diam D)^2}+\int_{(2\diam D)^2}^\infty\right) p_D(t,x,y)\uu(t)dt\nonumber\\
			&\lesssim I_1(|x-y|)+I_2(|x-y|)+L.\label{eq:proof Green bound}
		\end{align}
		Obviously,
		$$L\le \de(x)\de(y)\int_0^\infty e^{-\lambda_1 t}u(t)=\frac{\de(x)\de(y)}{\phi(\lambda_1)}\lesssim \left(\frac{\de(x)\de(y)}{|x-y|^{2}}\wedge 1\right)\frac{1}{|x-y|^{d}\phi(|x-y|^{-2})},$$
		since $|x-y|^{-d}\phi(|x-y|^{-2})^{-1}$ explodes at $x=y$ by \ref{A1}.
		
		For $I_1$ we imitate the calculations for \cite[Eq. (3.7)]{ksv_minimal2016}. Since $\uu(t)\lesssim \frac{\phi'(t^{-1})}{t^2\phi(t^{-1})^2}$  by \eqref{eq:Levy density upper bound} for all $t>0$, and since $t\mapsto\phi'(t^{-1})/\phi(t^{-1})^2$ increases, by the change of variables $c\,r^2/t=s$ we have
		\begin{align*}
			I_1(r)&\lesssim\int_0^{r^2}\left(\frac{\de(x)\de(y)}{t}\wedge 1\right)t^{-d/2}e^{-\frac{c\,r^2}{t}}\frac{\phi'(t^{-1})}{t^2\phi(t^{-1})^2}dt\\
			&\le \frac{\phi'(r^{-2})}{\phi(r^{-2})^2}\int_0^{r^2}\left(\frac{\de(x)\de(y)}{t}\wedge 1\right)t^{-d/2-2}e^{-\frac{c\,r^2}{t}}dt\\
			&\lesssim\left(\frac{\de(x)\de(y)}{r^2}\wedge 1\right)\frac{\phi'(r^{-2})}{r^{d+2}\phi(r^{-2})^2}\int_{c\,}^\infty s^{d/2+1}e^{-s}ds\lesssim\left(\frac{\de(x)\de(y)}{r^2}\wedge 1\right)\frac{1}{r^{d}\phi(r^{-2})},
		\end{align*}
		where the last inequality follows from \eqref{eq:scaling and the derivative}.
		
		The calculation for $I_2$ is slightly different  than  the one for \cite[Eq. (3.8)]{ksv_minimal2016}. Note that $\uu(t)\lesssim \frac{\phi'(t^{-1})}{t^2\phi(t^{-1})^2}\lesssim\frac{1}{t\phi(t^{-1})}\lesssim \frac{t^{\delta_2-1}}{r^{2\delta_2}\phi(r^{-2})}$, for $r^2\le t\le (2\diam D)^2$, where in the last approximate inequality we used \ref{WSC}. Hence
		\begin{align*}
			I_2(r)&\lesssim\frac{1}{r^{2\delta_2}\phi(r^{-2})}\int_{r^2}^{(2\diam D)^2}\left(\frac{\de(x)\de(y)}{t}\wedge 1\right)t^{\delta_2-1-d/2}e^{-\frac{c\,r^2}{t}}dt\\
			&\le\left(\frac{\de(x)\de(y)}{r^2}\wedge 1\right)\frac{1}{r^{2\delta_2}\phi(r^{-2})}\int_{r^2}^{\infty}t^{\delta_2-d/2-1}dt\lesssim\left(\frac{\de(x)\de(y)}{r^2}\wedge 1\right)\frac{1}{r^{d}\phi(r^{-2})}.
		\end{align*}
		
		The claim now follows from \eqref{eq:proof Green bound}.
		
	\end{proof}
	
	\subsection{Boundary behaviour of some integrals}\label{apx:proof of boundary GDFI}
	\begin{lem}\label{apx:l:approx of surface cap}
		For $\Gamma=\{y\in\partial D: |x-y|\le 2\de(x)\}$ it holds that
		\begin{align*}
			\int_{\Gamma}|x-y|^{-d}\asymp \de(x)^{-1},\quad x\in D.
		\end{align*}
	\end{lem}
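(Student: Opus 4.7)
The plan is to reduce to a flat-boundary computation via local $C^{1,1}$ graph coordinates at the nearest boundary point. First I would dispose of the easy range $\de(x)\ge r_0$ for a fixed small $r_0>0$ chosen below the positive reach of $\partial D$: in that case $\Gamma$ is contained in a region on which $|x-y|$ is bounded away from $0$, while $\sigma(\partial D)<\infty$ and $\de(x)$ lies in the compact interval $[r_0,\diam D]$, so both sides of $\asymp$ are bounded above and below by constants depending only on $d$, $D$, and $r_0$. Hence the content is in the range $\de(x)<r_0$.

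For such $x$, pick the (unique, for $r_0$ small) nearest boundary point $z\in\partial D$. After a rigid motion I may assume $z=0$, the inner normal is $e_d$, and $x=\de(x)\,e_d$. By the $C^{1,1}$ assumption, $\partial D$ is locally the graph $y=(y',\varphi(y'))$ of a $C^{1,1}$ function $\varphi\colon B_{d-1}(0,r_0')\to\R$ with $\varphi(0)=0$, $\nabla\varphi(0)=0$, and
\[
|\varphi(y')|\le K|y'|^2,\qquad |\nabla\varphi(y')|\le K|y'|,
\]
where $K$ depends only on $D$. Since $|x-y|\le 2\de(x)$ forces $|y'|\le 2\de(x)$, taking $r_0$ small enough keeps all of $\Gamma$ inside a single chart.

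The central geometric step is the comparison
\[
|x-y|^{2}=|y'|^2+\bigl(\de(x)-\varphi(y')\bigr)^{2}\asymp |y'|^{2}+\de(x)^{2},\qquad y\in\Gamma,
\]
with two-sided constants depending only on $K$; this follows since $|\varphi(y')|\le K|y'|^2\le 4K\de(x)^2\ll\de(x)$ for $\de(x)<r_0$ small, so $(\de(x)-\varphi(y'))^2\asymp\de(x)^2$. The same comparison shows that $\Gamma$ corresponds to the range $\{|y'|\le c_*\de(x)\}$ for some $c_*=c_*(K)>0$. Moreover the surface element on the graph is $d\sigma(y)=\sqrt{1+|\nabla\varphi(y')|^{2}}\,dy'$, with the Jacobian pinched between $1$ and $\sqrt{1+K^{2}r_0'^{\,2}}$. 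Therefore
\[
\int_{\Gamma}\frac{\sigma(dy)}{|x-y|^{d}}\asymp \int_{|y'|\le c_*\de(x)}\frac{dy'}{(|y'|^{2}+\de(x)^{2})^{d/2}}.
\]

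Finally, the substitution $y'=\de(x)u$ turns the right-hand side into
\[
\de(x)^{-1}\int_{|u|\le c_*}(|u|^{2}+1)^{-d/2}\,du,
\]
where the remaining integral is a finite positive constant depending only on $d$. This yields $\int_{\Gamma}|x-y|^{-d}\,\sigma(dy)\asymp\de(x)^{-1}$.

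The main obstacle is really only the geometric comparison in the third paragraph: one must check carefully that the $C^{1,1}$ vertical displacement $\varphi(y')$ is negligible relative to $\de(x)$ on all of $\Gamma$, and use this both to rewrite the distance and to identify $\Gamma$ (up to constants) with a Euclidean disk of radius $\sim\de(x)$ in the tangent hyperplane. Once this is in place, the scaling and the reduction to the $\de(x)\ge r_0$ case are routine.
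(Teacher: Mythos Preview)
Your proposal is correct and follows essentially the same approach as the paper: flatten the boundary in a local $C^{1,1}$ graph chart at the nearest boundary point, express the surface integral in the tangential variable, and rescale by $\de(x)$. The only minor differences are in execution: the paper keeps the exact region of integration and invokes the dominated convergence theorem to show the rescaled integral tends to a fixed positive constant as $\de(x)\to 0$, whereas you replace $\Gamma$ by comparable Euclidean disks via the estimate $|x-y|^{2}\asymp|y'|^{2}+\de(x)^{2}$ and compute directly; you also treat the range $\de(x)\ge r_0$ explicitly, which the paper leaves implicit.
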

	\begin{proof}
		Since $D$ is a $C^{1,1}$ set, for small enough $\de(x)$ the boundary part $\Gamma$ can be described as $\Gamma=\{q\in\R^{d-1}:|\de(x)-f(q)|^2+|q|^2\le 4\de(x)^2\}$, for some $C^{1,1}$ function $f$ on $\R^{d-1}$ such that $f(0)=0$ and $\nabla f(0)=0$, whereas $x$ can be viewed as $x=(0,\dots,0,\de(x))$. Hence
		\begin{align*}
			\int_{\Gamma}\frac{\de(x)}{|x-y|^{d}}d\sigma(y)&\asymp\de(x) \int\limits_{\{q\in\R^{d-1}:|\de(x)-f(q)|^2+|q|^2\le 4\de(x)^2\}}\frac{\sqrt{1+|\nabla f(q)|^2}}{(|\de(x)-f(q)|^2+|q|^2)^{d/2}}dq\\
			&\asymp\int\limits_{\{z\in\R^{d-1}:|1-f(\de(x)z)/\de(x)|^2+|z|^2\le 4\}}\frac{1}{\left(\left|1-\frac{f(\de(x)z)}{\de(x)}\right|^2+|z|^2\right)^{d/2}}dz,
		\end{align*}
		where we first used that $|\nabla f|$ is bounded by the Lipschitz property and then the substitution $q=\de(x)z$. Since $f\in C^{1,1}(\R^{d-1})$ such that $f(0)=0$ and $\nabla f(0)=0$, by the dominated convergence theorem the last integral converges to
		$$\int\limits_{\{z\in\R^{d-1}:1+|z|^2\le 4\}}\frac{1}{(1+|z|^2)^{d/2}}dz<\infty$$
		as $\de(x)\to 0$.
		
	\end{proof}
	
	The two following lemmas are in  the  spirit the same as \cite[Lemma A.4 \& Lemma A.5]{semilinear_bvw}. These lemmas from \cite{semilinear_bvw} lead to a result  that  is an analogue of Theorem \ref{t:boundary of GDFIU} in the case of subordinate Brownian motions, see \cite[Proposition 4.1]{semilinear_bvw}.
	
	Let $\epsilon=\epsilon(D) >0$ be such that the map $\Phi:\partial D\times (-\epsilon, \epsilon)\to \R^d$ defined by $\Phi(y,\delta)=y+\delta\mathbf{n}(y)$ defines a diffeomorphism to its image, cf.~\cite[Remark 3.1]{AGCV19}. Here $\mathbf{n}$ denotes the unit interior normal.  Without loss of generality assume that $\epsilon<\mathrm{diam}(D)/20$.

	\begin{lem}\label{apx:l:GDFI close-to-bdry}
		Let $\eta<\epsilon$ and assume that  conditions \ref{U1}-\ref{U4} hold true. Then for any $x\in D$ such that $\delta_D(x)<\eta/2$, 
		\begin{align}\label{eq:close-to-bdry}
			\begin{split}
				\GDFI\big(U(\de)\1_{(\de<\eta)}\big)(x)&\asymp \frac{1}{\de(x)^2\phi(\de(x)^{-2})}\int_0^{\de(x)}U(t)t\, dt+ \de(x)\int_0^{\eta}U(t)t\, dt \\
				&\qquad+\de(x)\int_{3\de(x)/2}^{\eta}\frac{U(t)}{t^2\phi(t^{-2})}\, dt\, .
			\end{split}
		\end{align}
		In particular, $\GDFI\big(U(\delta_D)\1_{(\delta_D<\eta)}\big)(x)<\infty$ if and only if  the integrability condition \eqref{eq:int-cond} holds true. Moreover, all comparability constants depend only on $d$, $D$ and $\phi$ and are independent of $\eta$.
	\end{lem}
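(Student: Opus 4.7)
The plan is to combine the sharp Green function estimate \eqref{eq:Green function sharp estimate} with a change of variables into boundary coordinates, then split the integration region into three pieces matching the three summands on the right-hand side of \eqref{eq:close-to-bdry}. Since $\eta < \epsilon$, every $y$ with $\de(y) < \eta$ lies in the tubular neighbourhood and can be uniquely written as $y = \Phi(z,s) = z + s\,\mathbf n(z)$ with $z \in \partial D$ and $s = \de(y) \in (0,\eta)$. The $C^{1,1}$ regularity of $\partial D$ guarantees that the Jacobian of $\Phi$ is comparable to $1$ and that $|x-y|^2 \asymp |x^*-z|^2 + |\de(x)-s|^2$ uniformly in the tube, where $x^*$ is the projection of $x$ onto $\partial D$; consequently $|x-y| \gtrsim s \vee \de(x)$ when $|x^*-z|$ is not much smaller than $s+\de(x)$.

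First I would use \eqref{eq:Green function sharp estimate} to reduce the quantity $\GDFI(U(\de)\1_{(\de<\eta)})(x)$ to an integral of the form
\begin{equation*}
\int_0^{\eta} U(s) \int_{\partial D} \Bigl(\tfrac{\de(x)\,s}{|x-y|^2}\wedge 1\Bigr)\,\frac{d\sigma(z)}{|x-y|^d \phi(|x-y|^{-2})}\, ds.
\end{equation*}
I would then split the $s$-integral into $s \in (0,\de(x)/2]$, $s \in (\de(x)/2, 3\de(x)/2]$, and $s \in (3\de(x)/2, \eta)$. In each sub-range I would perform the inner boundary integral by reducing to the flat case (as in Lemma \ref{apx:l:approx of surface cap}), which produces clean $s$- and $\de(x)$-dependent factors.

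For $s \le \de(x)/2$, we have $|x-y| \asymp \de(x)$ when $|x^*-z| \lesssim \de(x)$ and $|x-y| \asymp |x^*-z|$ otherwise; the $\wedge 1$ factor is always $\de(x)s/|x-y|^2$, so the boundary integral produces $s/(\de(x)\phi(\de(x)^{-2}))$ after integration, giving the first term via $\int_0^{\de(x)} U(s) s\,ds / (\de(x)^2 \phi(\de(x)^{-2}))$ (here condition \ref{U2} lets me absorb the boundary tail cleanly). For $s \in (\de(x)/2, 3\de(x)/2]$, both $\de(x)$ and $s$ are comparable, so the boundary integral evaluates to $\asymp 1/(\de(x)\phi(\de(x)^{-2}))$, and by \ref{U2} and \ref{U3} the corresponding contribution is comparable to the first term, so it is absorbed. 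For $s \ge 3\de(x)/2$, the minimum equals $\de(x)s/|x-y|^2$ when $|x^*-z| \lesssim \sqrt{\de(x)s}$ and equals $1$ otherwise; after performing the boundary integral one obtains $\de(x)/(s\phi(s^{-2}))$ plus a piece of order $\de(x)\, s$ coming from the far-boundary tail (controlled by \ref{U4}), which respectively generate the third and second terms in \eqref{eq:close-to-bdry}. Reversing these bounds to obtain the $\gtrsim$ direction is routine since the estimates on $\GDFI$ and on $|x-y|$ are two-sided.

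The main obstacle is the bookkeeping in the intermediate regime $s \in (\de(x)/2, 3\de(x)/2]$, where the boundary integral has a mild singularity and one must use the reverse doubling \ref{U3} to re-absorb this contribution into the $s \in (0,\de(x))$ integral; a second delicate point is verifying that all constants depend only on $d$, $D$ and $\phi$ (and not on $\eta$), which forces one to use \ref{U4} rather than any crude bound when handling the far piece near $s = \eta$. The final assertion about equivalence of finiteness and \eqref{eq:int-cond} follows once \eqref{eq:close-to-bdry} is proved, because the first summand carries precisely the integrability $\int_0^1 U(t)t\,dt$ while the other two summands are finite automatically under \ref{U1}, \ref{U2}, \ref{U4}.
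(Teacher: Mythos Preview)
Your strategy is essentially the same as the paper's: both use the sharp Green estimate \eqref{eq:Green function sharp estimate}, pass to tubular/flat coordinates, and split according to the ratio $\de(y)/\de(x)$. The paper, however, uses a finer five–piece decomposition rather than your three $s$-slabs: in addition to the slabs $\{\de(y)<\de(x)/2\}$, $\{\de(x)/2<\de(y)<3\de(x)/2\}$, $\{3\de(x)/2<\de(y)<\eta\}$, it first peels off the ball $D_1=B(x,\de(x)/2)$ (where the $\wedge 1$ factor equals $1$) and the far region $D_2=\{|x-y|>r_0\}$ (where $|x-y|\asymp 1$). The far piece $D_2$ produces the middle term $\de(x)\int_0^\eta U(t)t\,dt$ directly with the full range of $t$; in your packaging you only recover $\de(x)\int_{3\de(x)/2}^\eta U(t)t\,dt$, which is harmless because the missing piece is dominated by the first summand, but it is worth making this absorption explicit.

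The one place where your ``do the boundary integral first, then integrate in $s$'' scheme genuinely needs more care is the intermediate slab $s\in(\de(x)/2,3\de(x)/2]$: the surface integral $\int_{\partial D}\GDFI(x,\Phi(z,s))\,d\sigma(z)$ is \emph{not} uniformly $\asymp 1/(\de(x)\phi(\de(x)^{-2}))$ there, and in fact diverges on the slice $s=\de(x)$ whenever $\delta_1\le 1/2$ (the singularity is $\int_0 r^{-2}/\phi(r^{-2})\,dr$). This is exactly why the paper separates out the ball $D_1$ before flattening; once $D_1$ is removed, the remaining intermediate piece $D_5$ has $|x-y|\ge\de(x)/2$ and the slab computation goes through. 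Your outline would work if you treat the intermediate slab as a two-dimensional integral rather than iterated, or equivalently carve out $B(x,\de(x)/2)$ first. Finally, two of your intermediate expressions are off by one power: the boundary integral in the small-$s$ regime should be $s/(\de(x)^2\phi(\de(x)^{-2}))$ (not $s/(\de(x)\phi(\de(x)^{-2}))$), and in the large-$s$ regime it should be $\de(x)/(s^2\phi(s^{-2}))$ (not $\de(x)/(s\phi(s^{-2}))$); your stated final contributions are nonetheless correct, so these look like slips rather than conceptual errors.
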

	
	\begin{proof}
		Fix some  $r_0<\epsilon$ and fix $x\in D$ as in the statement. Define
		\begin{align}\label{apx:eq: D sets}
			\begin{split}
				D_1&= B(x, \delta_D(x)/2)\\
				D_2&=\{y: \delta_D(y)<\eta\}\setminus B(x, r_0)\\
				D_3&=\{y: \delta_D(y)<\delta_D(x)/2\}\cap B(x, r_0)\\
				D_4&= \{y:3\delta_D(x)/2 < \delta_D(y)<\eta\}\cap B(x,r_0)\\
				D_5&=\{y: \delta_D(x)/2<\delta_D(y)<3\delta_D(x)/2\}\cap (B(x, r_0)\setminus B(x, \delta_D(x)/2)).
			\end{split}
		\end{align}
		Thus we have that
		$$
		\GDFI\big(U(\de)\1_{(\de<\eta)}\big)(x)=\sum_{j=1}^5 \int_{D_j}\GDFI(x,y)U(\de(y))\, dy =:\sum_{j=1}^5 I_j.
		$$

		\noindent
		{\bf Estimate of $I_1$:} We show that
		\begin{equation}\label{e:estimate-I1}
			I_1\asymp \frac{U(\de(x))}{\phi(\de(x)^{-2})}\lesssim \frac{1}{\de(x)^2\phi(\de(x)^{-2})}\int_0^{\de(x)}U(t)t\,dt.
		\end{equation}
		Indeed, let $y\in D_1$. Then $\de(y)>\de(x)/2>|y-x|$ implying that 
		\begin{align}\label{eq:G-I1}
			\GDFI(x,y)\asymp \frac{1}{|x-y|^{d}\phi(|x-y|^{-2})}.
		\end{align}
		Further, by using first \eqref{eq:wus-U} and then \eqref{eq:doubling-U} we have that
		\begin{equation}\label{eq:estimate-I1b}
			U(\de(y))\asymp  U(\de(x)).
		\end{equation}
		Therefore, 
		\begin{align*}
			I_1 &\asymp\int_{D_1}U(\de(y)) \frac{1}{|x-y|^{d}\phi(|x-y|^{-2})}\, dy \\
			&\asymp U(\de(x)) \int\limits_{|y-x|<\de(x)/2}  \frac{1}{|x-y|^{d}\phi(|x-y|^{-2})}\, dy\asymp \frac{U(\de(x))}{\phi(\de(x)^{-2})}.
		\end{align*}
		Finally,  by \eqref{eq:wus-U} we get
		\begin{align*}
			\frac{1}{\de(x)^2\phi(\de(x)^{-2})}  \int_0^{\de(x)}U(t) t\, dt &\gtrsim \frac{U(\de(x)))}{\de(x)^2\phi(\de(x)^{-2})}\int_0^{\de(x)}  t\, dt\\
			&\asymp  \frac{U(\de(x))}{\phi(\de(x)^{-2})}.
		\end{align*}
		
		\noindent
		{\bf Estimate of $I_2$:} Next, we show that
		\begin{equation}\label{eq:estimate-I2}
			I_2\asymp \de(x) \int_0^{\eta}U(t)t\, dt\, .
		\end{equation}
		Let $y\in D_2$. Then $r_0 <|y-x|< \mathrm{diam}(D)$ so that $|y-x|\asymp 1$. This implies that $\GDFI(x,y)\asymp \de(x)\de(y)$. Therefore
		$$
		I_2 \asymp \de(x)\int_{D_2} U(\de(y))\de(y)\, dy \asymp \de(x)\int_{\de(y)<\eta} U(\de(y))\de(y)\, dy \asymp \de(x) \int_0^{\eta}U(t)t\, dt,
		$$
		where the last approximate equality follows by the co-area formula. 
		
		In estimates for $I_3$, $I_4$ and $I_5$ we will use the change of variables formula based on a diffeomorphism $\Phi: B(x, r_0)\to B(0,r_0)$ satisfying
		\begin{align*}
			& \Phi(D\cap B(x,r_0))=B(0,r_0)\cap \{z\in \R^d: \, z\cdot e_d>0\},\\
			& \Phi(y)\cdot e_d= \de(y) \ \  \textrm{for any }y\in B(x,r_0), \quad \Phi(x)=\de(x)e_d, 
		\end{align*}
		cf. \cite[p. 38]{AGCV19}. For the point $z\in \R^d_+=\{z\in \R^d: \, z\cdot e_d>0\}$ we will write $z=(\wt{z}, z_d)$. Several times we also use the following integrals:
		\begin{align}
			\int_0^{a}\frac{s^{d-2}}{(1+s)^d}\, ds&= \frac{(1+1/a)^{1-d}}{(d-1)}\, ,\quad a>0,\label{eq:int1}\\
			\int_0^{a}\frac{s^{d-2}}{(1+s)^{d+2}}\, ds&= \frac{a^{d-1}}{(1+a)^{d+1}}\big(2a(1+a)+d+2ad+d^2\big)\, ,\quad a>0.\label{eq:int2}
		\end{align}
		
		\noindent 
		{\bf Estimate of $I_3$:} We prove that  
		\begin{equation}\label{eq:estimate-I3}
			I_3\, \asymp\,\frac{1}{\de(x)^2\phi(\de(x)^{-2})} \int_0^{\de(x)}U(t)t\, dt.
		\end{equation}
		To see this, take $y\in D_3$. Then $\de(y)\le \de(x)/2$ implying $|x-y|\ge \de(x)/2$, and thus
		\begin{equation}\label{eq:G-I3 1}
			\GDFI(x,y)\asymp \frac{\de(x)\de(y)}{|x-y|^{d+2}\phi(|x-y|^{-2})}.
		\end{equation}
		Therefore, by repeating the first five lines of the calculations in \cite[p. 34 for the integral $I_3$]{semilinear_bvw} with our bound \eqref{eq:G-I3 1} we get
		\begin{align}
			I_3 &\asymp \int_0^{r_0/\de(x)}s^{d-2}\int_0^{1/2}\frac{U(\de(x)h)h}{\big((1-h)+s\big)^{d+2}\phi\big(\de(x)^{-2}((1-h)+s)^{-2}\big)}dh\,ds\nonumber\\
			&\asymp \int_0^{r_0/\de(x)}s^{d-2}\int_0^{1/2}\frac{U(\de(x)h)h}{\big(1+s\big)^{d+2}\phi\big(\de(x)^{-2}(1+s)^{-2}\big)}dh\,ds,\label{eq:G-I3-middle1}
		\end{align}
		where the last line comes from $\frac12\le h\le 1$. Further, for $\phi$ it holds that
		\begin{align}\label{eq:G-I3-middle2}
			(1+s)^{-2}\phi(\de(x)^{-2})\le\phi\big(\de(x)^{-2}(1+s)^{-2}\big)\le \phi(\de(x)^{-2}),
		\end{align}
		see \eqref{eq:simple global scaling}. Since we have
		\begin{align}\label{eq:G-I3-middle3}
			\int_0^{r_0/\de(x)}\frac{s^{d-2}}{(1+s)^d}ds\asymp 1,\quad \int_0^{r_0/\de(x)}\frac{s^{d-2}}{(1+s)^{d+2}}ds\asymp 1,
		\end{align}
		by \eqref{eq:int1} and \eqref{eq:int2}, by applying the inequalities \eqref{eq:G-I3-middle2} to \eqref{eq:G-I3-middle1} and by using \eqref{eq:G-I3-middle3} we obtain
		\begin{align*}
			I_3 &\asymp \frac{1}{\phi(\de(x)^{-2})}\int_0^{1/2}U(\de(x)h)h\,dh\\
			&\asymp \frac{1}{\de(x)^2\phi(\de(x)^{-2})}\int_0^{\de(x)/2}U(t)t\,dt.
		\end{align*}
		This proves  \eqref{eq:estimate-I3} since the almost non-increasing condition \eqref{eq:wus-U} implies
		\begin{align*}
			\int_0^{\de(x)/2}U(t) t\, dt&=\int_0^{\de(x)} U(t/2)t\, dt \gtrsim \int_0^{\de(x)}  U(t)   t\,dt.
		\end{align*}
		
		\noindent
		{\bf Estimate of $I_4$:} By applying the same calculations as in $I_3$, we show that 
		\begin{equation}\label{eq:estimate-I4}
			I_4\,\asymp\, \de(x)\int_{3\de(x)/2}^{\eta} \frac{U(t)}{t^2\phi(t^{-2})}\, dt\,.
		\end{equation}
		Let $y\in D_4$. Then  $|x-y|\ge \de(x)/2$ and $|x-y|\ge \de(y)/3$, hence $\GDFI(x,y)$ is of the form \eqref{eq:G-I3 1}. By following the  computation in \cite[p. 35 for the integral $I_4$]{semilinear_bvw} with our bound \eqref{eq:G-I3 1}, we arrive at
		\begin{align}
			I_4 &\asymp \int_0^{r_0/\de(x)}s^{d-2}\int_{3/2}^{\eta/\de(x)}\frac{U(\de(x)h)h}{\big((h-1)+s\big)^{d+2}\phi\big(\de(x)^{-2}((h-1)+s)^{-2}\big)}dh\,ds\nonumber\\
			&\asymp \int_0^{r_0/\de(x)}s^{d-2}\int_{3/2}^{\eta/\de(x)}\frac{U(\de(x)h)h}{\big(h+s\big)^{d+2}\phi\big(\de(x)^{-2}(h+s)^{-2}\big)}dh\,ds\nonumber\\
			&\asymp \int_0^{r_0/(\de(x)h)}s^{d-2}\int_{3/2}^{\eta/\de(x)}\frac{U(\de(x)h)}{h^2\big(1+s\big)^{d+2}\phi\big((\de(x)h)^{-2}(1+s)^{-2}\big)}dh\,ds,\label{eq:G-I4-middle1}
		\end{align}
		where the second line comes from $\frac13 h\le h-1\le h$. By applying \eqref{eq:G-I3-middle2} in \eqref{eq:G-I4-middle1}, since the relations \eqref{eq:G-I3-middle3} also hold for $r_0/(\de(x)h)$ instead of $r_0/\de$, we get
		\begin{align*}
			I_4 &\asymp \int_{3/2}^{\eta/\de(x)}\frac{U(\de(x)h)}{h^2\phi\big((\de(x)h)^{-2}\big)}dh\\
			&\asymp \de(x)\int_{3\de(x)/2}^{\eta}\frac{U(t)}{t^2\phi\big(t^{-2}\big)}dt.
		\end{align*}
		
		\noindent
		{\bf Estimate of $I_5$:}  Under  the almost non-increasing condition \eqref{eq:wus-U} and the doubling condition \eqref{eq:doubling-U} we show that
		\begin{equation}\label{eq:estimate-I5}
			I_5 \,\lesssim\, \frac{U(\de(x))}{\phi(\de(x)^{-2})}\,\lesssim\, \frac{1}{\de(x)^2\phi(\de(x)^{-2})}\int_0^{\de(x)}U(t)t\, dt\, .
		\end{equation}
		Indeed, let $y\in D_5$. Then $|x-y|>\de(x)/2>\de(y)/3$, hence  $\GDFI(x,y)$ is of the form \eqref{eq:G-I3 1}. The estimate \eqref{eq:estimate-I1b} also  holds since $\de(y)\asymp \de(x)$. Therefore
		\begin{align*}
			I_5 &\asymp \de(x)\int_{D_5} \frac{U(\de(y))\de(y)}{|x-y|^{d+2}\phi(|x-y|^{-2})}\, dy\\
			&\lesssim \frac{U(\de(x))}{\phi(\de(x)^{-2})} \int_{D_5}\frac{1}{|x-y|^d}\, dy\, ,
		\end{align*}
		where the last line comes from $|x-y|^2\phi(|x-y|^{-2})\asymp \phi'(|x-y|^{-2})\ge \phi'(4\de(x)^{-2})\asymp\de(x)^2\phi(\de(x)^{-2})$ since $\phi'$ decreases.
		To end the calculations, it was shown in \cite[p. 42]{AGCV19} that the last integral is comparable to 1. This proves the first approximate inequality in \eqref{eq:estimate-I5}, while the second was already proved in the estimate of $I_1$. 
		
		The proof is finished by noting that $I_1+I_5\lesssim I_3$.
	\end{proof}
	
	\begin{lem}\label{apx:l:away-from-bdry}
		Let $\eta<\epsilon$ and assume that  conditions \ref{U1}-\ref{U4} hold true. There exists $c(\eta)>0$ such that for any $x\in D$ satisfying $\de(x)\ge\eta/2$, 
		\begin{equation}\label{eq:away-from-bdry}
			\GDFI\big(U(\de)\1_{(\de<\eta)}\big)(x)\le c(\eta)\, .
		\end{equation}
	\end{lem}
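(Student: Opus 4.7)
The plan is to use the sharp Green function estimate \eqref{eq:Green function sharp estimate} and split the integration region $\{y\in D:\de(y)<\eta\}$ into a ``close'' part $A_1=\{y:|x-y|<\de(x)/2\}$ and a ``far'' part $A_2=\{y:|x-y|\geq\de(x)/2\}$, exploiting the hypothesis $\de(x)\geq\eta/2$ in each case.

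\textbf{Close part.} On $A_1$, the triangle inequality gives $\de(y)\geq\de(x)/2\geq\eta/4$, so $\de(x)\de(y)/|x-y|^2\geq 1$ and \eqref{eq:Green function sharp estimate} yields
\[
\GDFI(x,y)\lesssim\frac{1}{|x-y|^d\phi(|x-y|^{-2})}.
\]
Since $\eta/4\leq\de(y)<\eta$ (if this range is empty there is nothing to bound), condition \ref{U4} implies $U(\de(y))\leq M(\eta)\coloneqq\sup_{t\in[\eta/4,\eta]}U(t)<\infty$. Using the antiderivative identity $\frac{d}{dr}[1/\phi(r^{-2})]\asymp 1/(r\phi(r^{-2}))$, which is a direct consequence of \eqref{eq:scaling and the derivative}, one obtains
\[
\int_{A_1}\GDFI(x,y)U(\de(y))\,dy\lesssim M(\eta)\int_0^{\de(x)/2}\frac{dr}{r\phi(r^{-2})}\asymp\frac{M(\eta)}{\phi((\de(x)/2)^{-2})}\leq\frac{M(\eta)}{\phi((\diam D)^{-2})},
\]
which is a constant depending only on $d$, $D$, $\phi$ and $\eta$.

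\textbf{Far part.} On $A_2$, the estimate \eqref{eq:Green function sharp estimate} gives
\[
\GDFI(x,y)\lesssim\frac{\de(x)\de(y)}{|x-y|^{d+2}\phi(|x-y|^{-2})}.
\]
Since $|x-y|\geq\de(x)/2\geq\eta/4$ and $|x-y|\leq\diam D$, the denominator is bounded below by a constant depending on $\eta$, $D$, $\phi$, so $\GDFI(x,y)\lesssim C_1(\eta)\de(x)\de(y)\leq C_1(\eta)\diam(D)\,\de(y)$. Combining this with the co-area formula gives
\[
\int_{A_2\cap\{\de(y)<\eta\}}\GDFI(x,y)U(\de(y))\,dy\lesssim C_2(\eta)\int_0^{\eta}tU(t)\,dt.
\]
By \ref{U1}, $\int_0^1 tU(t)\,dt<\infty$, and by \ref{U4} together with $U$ being non-negative, $\int_1^\eta tU(t)\,dt\leq\frac{\eta^2}{2}\sup_{t\in[1,\eta]}U(t)<\infty$, so the right-hand side is finite and bounded in terms of $\eta$, $D$, $\phi$.

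Adding the two contributions yields the desired bound $\GDFI(U(\de)\1_{(\de<\eta)})(x)\leq c(\eta)$. I do not foresee a real obstacle here: the proof is essentially a careful case analysis on $|x-y|$ vs.\ $\de(x)$, and all four conditions \ref{U1}--\ref{U4} enter naturally (\ref{U4} for boundedness of $U$ on the annulus $[\eta/4,\eta]$ and on $[1,\eta]$; \ref{U1} for integrability near $0$; conditions \ref{U2} and \ref{U3} are not needed for this one-sided bound). The only minor care is tracking that the hypothesis $\de(x)\geq\eta/2$ is used in both regions to keep the relevant denominators bounded below.
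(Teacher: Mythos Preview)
Your proof is correct. The paper takes a slightly different decomposition: rather than splitting by the distance $|x-y|$ relative to $\de(x)/2$, it splits by the boundary distance of $y$, setting $D_1=\{\de(y)<\eta/4\}$ and $D_2=\{\eta/4\le\de(y)<\eta\}$. On $D_1$ the paper argues exactly as in your far part (since $\de(y)<\eta/4\le\de(x)/2$ forces $|x-y|>\de(x)/2$), obtaining a bound $\lesssim\frac{1}{\eta^2\phi(\eta^{-2})}\int_0^\eta U(t)t\,dt$ via the co-area formula and \ref{U1}. On $D_2$ the paper invokes the almost non-increasing condition \ref{U2} to get $U(\de(y))\lesssim U(\eta/4)$ and then integrates the bare Green kernel over a ball. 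Your version trades \ref{U2} for \ref{U4} in the close part, bounding $U$ directly on the interval $[\eta/4,\eta]$; your remark that \ref{U2} and \ref{U3} are not needed for this one-sided bound is correct and slightly sharpens the paper's presentation. The two decompositions are essentially dual to each other and of equal length.
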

	
	\begin{proof}
		Fix $x\in D$ as in the statement and define
		\begin{align*}
			D_1=&\{y:\, \de(y)<\eta/4\},\\
			D_2=&\{y:\, \eta/4 \le \de(y) <\eta\}.
		\end{align*}
		Then
		$$
		\GDFI\big(U(\delta_D)\1_{(\delta_D<\eta)}\big)(x)=\sum_{j=1}^2 \int_{D_j}\GDFI(x,y)U(\delta_D(y))\, dy =:\sum_{j=1}^2 J_j.
		$$
		
		\noindent 
		{\bf Estimate of $J_1$:} 
		We show that 
		\begin{equation}\label{eq:estimate-J1}
			J_1\lesssim\frac{1}{\eta^2\phi(\eta^{-2})} \int_0^{\eta}U(t)t\, dt.
		\end{equation}
		Let $y\in D_1$. Then $\de(y)<\eta/4\le\de(x)/2$, hence by using $|x-y|\ge \de(x)-\de(y)$ we have that $|x-y|>\de(y)$ and $|x-y|>\de(x)/2$. This implies that $\GDFI(x,y)$ satisfies \eqref{eq:G-I3 1}. Therefore,
		$$
		J_1\asymp \de(x)\int_{D_1}\frac{U(\de(y))\de(y)}{|x-y|^{d+2}\phi(|x-y|^{-2})}\, dy\lesssim \frac{\de(x)}{\eta^2\phi(\eta^{-2})}\int_{D_1}\frac{U(\de(y))\de(y)}{|x-y|^{d}}\, dy,
		$$
		since on $D_1$ we have $|x-y|\ge\eta/4$, hence $|x-y|^2\phi(|x-y|^{-2})\gtrsim \eta^2\phi(\eta^{-2})$ by \eqref{eq:simple global scaling}.
		
		By using the co-area formula we get (below $dy$ denotes the $d-1$ dimensional Hausdorff measure on $\{\de(y)=t\}$)
		\begin{align}\label{eq:estimate-J1-inner}
			J_1\lesssim \frac{\de(x)}{\eta^2\phi(\eta^{-2})}\int_0^{\eta/4}U(t)t\left(\int_{\de(y)=t}\frac{1}{|x-y|^d}\, dy\right) dt. 
		\end{align}
		The inner integral in \eqref{eq:estimate-J1-inner} is estimated as follows: For $\de(y)=t$ it holds that $|x-y|\ge \de( x)-t$, hence $|x-y|^{-d}\le (\de(x)-t)^{-d}$. The $d-1$ dimensional Hausdorff measure of $\{\de(y)=t\}$ is larger than or equal to the $d-1$ dimensional  Hausdorff measure of the sphere around $x$ of radius $\de(x)-t$ which is comparable to $(\de(x)-t)^{d-1}$. This implies that the inner integral is estimated from above by a constant times $(\de(x)-t)^{-1}$. Thus
		$$
		J_1\lesssim \frac{1}{\eta^2\phi(\eta^{-2})}\de(x) \int_0^{\eta/4}U(t)\,t\, (\de(x)-t)^{-1}\, dt.
		$$
		If $t<\eta/4$, then $t<\de(x)/2$, implying $\de(x)/2<\de(x)-t<\de(x)$. Therefore,
		$$
		J_1\lesssim \frac{1}{\eta^2\phi(\eta^{-2})}\int_0^{\eta}U(t)t\, dt.
		$$
		
		\noindent
		{\bf Estimate of $J_2$:}
		It holds that 
		\begin{equation}\label{eq:estimate-J2}
			J_2\preceq U(\eta/4).
		\end{equation}
		Let $y\in D_2$. By the almost non-increasing condition \eqref{eq:wus-U} we have $U(\de(y))\le c_1 U(\eta/4)$, hence
		\begin{align*}
			J_2 &\lesssim \int_{\eta/4< \de(y) <\eta}\frac{U(\de(y))}{|x-y|^d\phi(|x-y|^{-2})}\, dy \lesssim U(\eta/4)\int_{\eta/4< \de(y) <\eta}\frac{1}{|x-y|^d\phi(|x-y|^{-2})}\, dy\\
			&\le U(\eta/4)\int_{B(x, 2\diam D)} \frac{1}{|x-y|^d\phi(|x-y|^{-2})}\, dy \lesssim U(\eta/4).
		\end{align*}
		The last estimate uses the fact that the integral is not singular.
		
		By putting together estimates for $J_1$ and $J_2$, we see that there exists $c_2>0$ such that
		$$
		\GDFI\big(U(\delta_D)\1_{(\delta_D<\eta)}\big)(x)\le c_2\left(\frac{1}{\eta^2\phi(\eta^{-2})}\int_0^{\eta}U(t)t\, dt +U(\eta/4)\right)=: c(\eta).
		$$
	\end{proof}
	
	\begin{proof}[\textbf{Proof of Theorem} \ref{t:boundary of GDFIU}]
		Fix some $\eta<\epsilon$ and treat it as a constant. Note that on $\{\de(y)\ge \eta\}$ it holds that $U$ is bounded (by the assumption  \ref{U4}). Therefore
		\begin{equation}\label{eq:gpe-b}
			\GDFI \big(U(\de)\1_{(\de\ge \eta)}\big)(x)\lesssim\GDFI \de(x)\asymp \de(x)\,,\quad x\in D,
		\end{equation}
		by Lemma \ref{l:rate of GDdelta}. For the lower bound of this term note that on $\{\de(x)\ge\eta/2\}$ we have
		\begin{align}\label{e:gpe-b1}
			\GDFI \big(U(\de)\1_{(\de\ge \eta)}\big)(x)&\gtrsim\int_{B(x,\eta/4)}\frac{1}{|x-y|^{d}\phi(|x-y|^{-2})}dy\asymp \frac{1}{\phi(16/\eta^2)}\gtrsim 1,
		\end{align}
		and on $\{\de(x)\le\eta/2\}$ we have
		\begin{align}
			\GDFI \big(U(\de)\1_{(\de\ge \eta)}\big)(x)&\gtrsim \de(x)\int_{\de(y)\ge \eta}\frac{\de(y)}{|x-y|^{d+2}\phi(|x-y|^{-2})}dy\gtrsim \de(x).
		\end{align}
		Since $\de(x)\asymp 1$ on $\{\de(x)\ge\eta/2\}$, we have just obtained $\GDFI \big(U(\de)\1_{(\de\ge \eta)}\big)(x)\asymp \de(x)$ in $D$. Further, by Lemma \ref{apx:l:away-from-bdry}, if $\de(x)\ge \eta/2$, then $\GDFI\big(U(\delta_D)\1_{(\delta_D<\eta)}\big)(x)\le c(\eta)$. Hence, 
		$$
		\GDFI(U(\de))(x)\asymp 1, \quad \de(x)\ge \eta/2.
		$$
		Since for $\de(x)\ge \eta/2$ the right-hand side of \eqref{eq:GDFI U sharp bound} is also comparable to 1, this proves the claim for the case $\de(x)\ge \eta/2$.
		
		Assume now that $\de(x)<\eta/2$. By Lemma \ref{apx:l:GDFI close-to-bdry} and \eqref{eq:gpe-b} we have that \eqref{eq:GDFI U sharp bound}  holds	where we clearly replaced $\eta$ of \eqref{eq:close-to-bdry} by $\diam D$ since we treat $\eta$ as a constant.
		
		Finally, we prove that $\GDFI(U(\de))(x)/\PDFI\sigma(x)\to 0$ as $x\to\partial D$. It is obvious that $\PDFI\sigma$ annihilates the first and the second term of  \eqref{eq:GDFI U sharp bound}. For the third term, note that on $\{t\ge \de(x)\}$ we have $t^2\phi(t^{-2})\gtrsim \de(x)^2\phi(\de(x)^{-2})$ and  $U(t)\de(x)\le U(t)t$. By applying the dominate convergence theorem we obtain
		\begin{align*}
			\frac{\de(x)\int_{\de(x)}^{\diam D}\frac{U(t)}{t^2\phi(t^{-2})}\, dt\,}{\PDFI\sigma(x)}\lesssim \int_{\de(x)}^{\diam D}U(t)\de(x)dt\to0,
		\end{align*}
		as $\de(x)\to 0$.
	\end{proof}

	\begin{lem}\label{apx:l:boundary operator of GDFI 2}
		Let $t<\epsilon$. There exists $C=C(d,D,\phi)>0$ such that for $\de(x)\ge \frac{t}{2}$ it holds that
		\begin{align*}
			\int_{\de(y)\le t}\frac{\GDFI(x,y)}{\PDFI\sigma(y)}dy\le C\,\wt f(x,t),
		\end{align*}
		where $0\le \wt f(x,t)\le t\,\de(x)$ on $\{\de(x)\ge t/2\}$ and $\wt f(x,t)/t\to 0$ as $t\to 0$ for every fixed $x\in D$.
	\end{lem}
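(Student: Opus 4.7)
The plan is to take $\wt f(x,t)$ proportional to the integral itself with a normalizing constant chosen so that $\wt f \le t\de(x)$; then both required properties of $\wt f$ follow from the single quantitative estimate
\[
\int_{\de(y)\le t}\frac{\GDFI(x,y)}{\PDFI\sigma(y)}\,dy \lesssim t^2 \quad \text{on } \{\de(x)\ge t/2\},
\]
because $t^2\le 2t\de(x)$ on that set, while for each fixed $x$ the quotient $t^2/t=t\to 0$ as $t\to 0$. Thus the task reduces to establishing this uniform $t^2$-bound.

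I would split $\{\de(y)\le t\}$ into $D_A=\{|x-y|\le \de(x)/2\}$ and $D_B=\{|x-y|>\de(x)/2\}$. On $D_A$, the inequality $|x-y|\le\de(x)/2$ forces $\de(y)\ge\de(x)/2$; combined with $\de(y)\le t$ this makes $D_A$ empty unless $\de(x)\le 2t$, in which case $\de(y)\asymp \de(x)\asymp t$. Using the first form of \eqref{eq:Green function sharp estimate} (applicable since $\de(x)\de(y)/|x-y|^2\ge 2$) and \eqref{eq:Poisson integral sharp bounds}, together with the substitution $u=\phi(r^{-2})$ as in \eqref{eq:cont GDf eq1}, the $D_A$-contribution is
\[
\int_{D_A}\frac{\GDFI(x,y)}{\PDFI\sigma(y)}\,dy \lesssim t^2\phi(t^{-2})\int_0^{\de(x)/2}\frac{dr}{r\phi(r^{-2})} \asymp \frac{t^2\phi(t^{-2})}{\phi(\de(x)^{-2})} \asymp t^2.
\]

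For $D_B$, the second form of \eqref{eq:Green function sharp estimate} gives $\GDFI(x,y)\lesssim \de(x)\de(y)/(|x-y|^{d+2}\phi(|x-y|^{-2}))$. The chain $\de(y)\le t\le 2\de(x)\le 4|x-y|$ valid on $D_B$, combined with the simple scaling \eqref{eq:simple global scaling}, yields $\de(y)^3\phi(\de(y)^{-2})\lesssim \de(y)|x-y|^2\phi(|x-y|^{-2})$, and together with \eqref{eq:Poisson integral sharp bounds} this gives
\[
\frac{\GDFI(x,y)}{\PDFI\sigma(y)}\lesssim \frac{\de(x)\de(y)}{|x-y|^d}.
\]
The co-area formula then reduces the $D_B$-contribution to a constant times $\de(x)\int_0^t s\,F(s)\,ds$, where $F(s):=\int_{\{\de(y)=s,\,|x-y|>\de(x)/2\}}|x-y|^{-d}\,d\mathcal{H}^{d-1}(y)$.

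The main technical step, and the principal obstacle, is proving $F(s)\lesssim \de(x)^{-1}$ uniformly for $s\in(0,t]$. I would parameterize the level set $\{\de(y)=s\}$ via the tubular neighborhood diffeomorphism $\partial D\ni z\mapsto z+s\mathbf{n}(z)$ (valid since $s\le t<\epsilon$, with bounded Jacobian), and pass to local flat coordinates around the projection of $x$ onto $\partial D$; analogously to Lemma \ref{apx:l:approx of surface cap}, this reduces $F(s)$ to an integral of $(|\tilde z|^2+(\de(x)-s)^2)^{-d/2}$ over the region $\{|\tilde z|^2+(\de(x)-s)^2>\de(x)^2/4\}$ in $\R^{d-1}$. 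Setting $a:=|\de(x)-s|$, splitting at the threshold $a=\de(x)/2$, and performing a standard radial estimate yields $F(s)\lesssim\de(x)^{-1}$ in both cases; the constraint $|x-y|>\de(x)/2$ is precisely what prevents blow-up when $s$ approaches $\de(x)$. Plugging this back gives $\int_{D_B}\lesssim \de(x)\cdot t^2/\de(x)=t^2$, and adding the $D_A$ and $D_B$ contributions yields the desired bound.
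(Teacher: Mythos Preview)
Your proof is correct and takes a genuinely different route from the paper's. The paper decomposes $\{\de(y)\le t\}$ into four regions $D_1,\dots,D_4$ according to both $\de(y)$ and $|x-y|$, obtains separate bounds $J_1,J_2\lesssim t^2$, $J_3\lesssim f(x,t)$ (with $f(x,t)/t\lesssim\de(x)$ and pointwise decay), $J_4\lesssim t^3\phi(t^{-2})\de(x)$, and then assembles $\wt f(x,t)=c\big(t^2+f(x,t)+t^3\phi(t^{-2})\de(x)\big)$. You instead use only two regions and prove the single uniform bound $\lesssim t^2$, which is actually \emph{stronger} than what the paper obtains (it immediately gives $\wt f(x,t)\asymp t^2$, for which both required properties are trivial).

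The efficiency comes from your pointwise inequality $\GDFI(x,y)/\PDFI\sigma(y)\lesssim\de(x)\de(y)/|x-y|^d$ on $D_B$: the comparison $\de(y)^2\phi(\de(y)^{-2})\lesssim |x-y|^2\phi(|x-y|^{-2})$ (valid because $\de(y)\le 4|x-y|$) absorbs all $\phi$-factors and reduces the problem to a classical Poisson-type surface integral. The paper keeps the $\phi$-dependence explicit in its $J_3,J_4$ pieces, which yields a more complicated $\wt f$ but does not require the level-set bound $F(s)\lesssim\de(x)^{-1}$. That bound is the one genuinely new step in your argument; your flat-coordinate sketch works, but it can also be obtained without any coordinate change by a dyadic decomposition of $\{\de(y)=s\}$ around $x$, using that $\mathcal H^{d-1}\big(\{\de(y)=s\}\cap B(x,r)\big)\lesssim r^{d-1}$ for a $C^1$ level set together with the lower bound $|x-y|\ge\max\{|\de(x)-s|,\de(x)/2\}$.
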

	\begin{proof}
		We need a little adaptation of Lemma \ref{apx:l:away-from-bdry}. We break the set $D_2$  into  three pieces.	Fix $r_0<\epsilon$ and $x\in D$ as in the statement. Define
		\begin{align*}
			D_1&=\{y:\, \de(y)<t/4\},\\
			D_2&=\{y:\, t/4 \le \de(y) <t\}\cap B(x,t/4),\\
			D_3&=\{y:\, t/4 \le \de(y) <t\}\cap B(x,t/4)^c\cap B(x,r_0),\\
			D_4&=\{y:\, t/4 \le \de(y) <t\}\cap B(x,r_0)^c.
		\end{align*}
		Then 
		$$\int_{\de(y)\le t}\frac{\GDFI(x,y)}{\PDFI\sigma(y)}dy=\sum_{i=1}^4\int_{D_i}\frac{\GDFI(x,y)}{\PDFI\sigma(y)}dy=\sum_{i=1}^4J_i.$$
		
		\noindent
		{\bf Estimate of $J_1$:} We prove
		\begin{align}\label{apx:eq:J1}
			J_1\lesssim t^2.
		\end{align}
		Let $y\in D_1$. Then $\de(y)<t/4\le\de(x)/2$, hence  $|x-y|>\de(x)/2>\de(y)$. This implies that $\GDFI(x,y)$ satisfies \eqref{eq:G-I3 1}, and $\GDFI(x,y)/\PDFI\sigma(y)\lesssim \de(x)\de(y)/|x-y|^d$. Therefore, by using the co-area formula in the second comparison, we get
		\begin{align*}
			J_1\lesssim \de(x)\int_{D_1}\frac{\de(y)}{|x-y|^d}\asymp \de(x)\int_0^{t/4}h\left(\int_{\de(y)=h}\frac{\sigma(dy)}{|x-y|^d}\right)dh.
		\end{align*}
		The inner integral is estimated as before, see the paragraph under \eqref{eq:estimate-J1-inner}, i.e. the inner integral is bounded from above by a constant times $(\de(x)-h)^{-1}$. Thus
		\begin{align*}
			J_1\lesssim \de(x)\int_{0}^{t/4}\frac{h}{\de(x)-h}dh.
		\end{align*}
		However, when $h<t/4$ we have $\frac12\de(x)\le \de(x)-h\le \de(x)$, therefore
		\begin{align*}
			J_1\lesssim \int_{0}^{t/4}h\,dh\lesssim t^2.
		\end{align*}

		In the following integral estimates we have $y\in D$ such that $t/4\le\de(y)\le t$ so $\PDFI\sigma(y) \asymp \frac{1}{t^2\phi(t^{-2})}$. 
		
		\noindent
		{\bf Estimate of $J_2$:}
		We prove
		\begin{align}\label{apx:eq:J2}
			J_2\lesssim t^2.
		\end{align}
		On $D_2$ we obviously have $\GDFI(x,y)\asymp \frac{1}{|x-y|^{d}\phi(|x-y|^{-2})}$, hence
		\begin{align*}
			J_2\lesssim t^2\phi(t^{-2})\int_{B(x,t/4)}\frac{1}{|x-y|^{d}\phi(|x-y|^{-2})}dy\asymp t^2\phi(t^{-2})\frac{1}{\phi(t^{-2})} \asymp t^2.
		\end{align*}
		
		\noindent
		{\bf Estimate of $J_3$:} We prove that $J_3\lesssim f(x,t)$ for a function $f$ which satisfies $0\le f(x,t)/t\lesssim \de(x)$ and $f(x,t)/t\to0$ as $t\to 0$ for every fixed $x\in D$.
		
		To this end, since $y\in D_3$, hence $|x-y|\ge t/4$, it holds that $\GDFI(x,y) \asymp \frac{\de(x)t}{|x-y|^{d+2}\phi(|x-y|^{-2})}$. Hence,
		\begin{align}\label{apx:eq:J3-1}
			J_3\asymp t^3\phi(t^{-2})\de(x) \int_{D_3}\frac{1}{|x-y|^{d+2}\phi(|x-y|^{-2})}dy\eqqcolon f(x,t).
		\end{align}
		Since $|x-y|\ge t/4$, we have $|x-y|^{2}\phi(|x-y|^{-2})\gtrsim t^2\phi(t^{-2})$ by \eqref{eq:simple global scaling}, hence
		\begin{align}\label{apx:eq:J3-2}
			f(x,t)/t\lesssim \de(x)\int_{D_3}\frac{1}{|x-y|^{d}}dy.
		\end{align}
		Also, by reducing to the flat case we have
		\begin{align*}
			\int_{D_3}\frac{1}{|x-y|^{d}}dy&\asymp\int_{t/4}^{r_0}\int_{t/4}^t\frac{r^{d-2}}{(|\de(x)-h|+r)^{d}}dh\,dr\\
			&\asymp \int_{t/4}^{r_0}\int_{(t/4-\de(x))/r}^{(t-\de(x))/r}\frac{1}{r(|\rho|+1)^d}d\rho\,dr.
		\end{align*}
		Since $\rho\mapsto 1/(|\rho|+1)$ is bell-shaped, and the inner interval $[(t/4-\de(x))/r,(t-\de(x))/r]$ has fixed length, the inner integral is maximal when the inner interval is symmetric (which is when $\de(x)=\frac58 t$), thus, we get
		\begin{align*}
			\int_{D_3}\frac{1}{|x-y|^d}&\lesssim \int_{t/4}^{r_0}\int_{-3t/(8r)}^{3t/(8r)}\frac{1}{r(|\rho|+1)^d}d\rho\,dr\\
			&=2\int_{t/4}^{r_0}\int_{0}^{3t/(8r)}\frac{1}{r(\rho+1)^d}d\rho\,dr.
		\end{align*}
		Further, $1\le \rho+1\le 3t/(8r)+1\le 3$ so we get
		\begin{align}\label{apx:eq:J3-3}
			\int_{D_3}\frac{1}{|x-y|^d}&\lesssim 
			\int_{t/4}^{r_0}\frac{t}{r^2}dr\lesssim 1.
		\end{align}
		Inserting the bound \eqref{apx:eq:J3-3} into \eqref{apx:eq:J3-2}, we get
		that $0\le f(x,t)/t\lesssim \de(x)$ where the constant of comparability depends only on $d$, $D$ and $\phi$.
		
		Further, if we fix $x$ and let $t\to 0$, then it is clear that  $\1_{D_3}\to 0$, and that $|x-y|^{-d-2}\phi(|x-y|^{-2})^{-1}\le c$ for every $y\in D_3$ for all small enough $t>0$. Hence, $f(x,t)/t\to0$ as $t\to 0$.
		
		\noindent
		{\bf Estimate of $J_4$:} We prove
		\begin{align}\label{apx:eq:J4}
			J_4\lesssim t^3\phi(t^{-2})\,\de(x).
		\end{align}
		For $y\in D_4$ we have $\GDFI(x,y)\lesssim \frac{\de(x)\de(y)}{|x-y|^{d+2}\phi(|x-y|^{-2})}\lesssim \de(x)t$ since $r_0\le |x-y|\le \diam D$. Hence, $J_4\lesssim t^3\phi(t^{-2})\de(x)$.
		
		To finish the proof, note that we can take $\wt f(x,t)=c \big(t^2 + f(x,t)+t^3\phi(t^{-2})\de(x)\big)$ for some constant $c=c(d,D,\phi)>0$.
	\end{proof}
	
	\subsection{Uniform integrability of some classes of functions}
	
	\begin{lem}\label{apx:l:Montenegro and Ponce Prop 2.1}
		Let $f:D\times \R\to\R$ be continuous in the second variable and $u_1,u_2\in\LLL$ such that $u_1\le u_2$. Assume that for every $u\in\LLL$ such that $u_1\le u \le u_2$ a.e. in $D$ it holds that $x\mapsto f(x,u(x))\in\LLL$. Then the family
		\begin{align*}
			\mathcal{F}\coloneqq \{f(\cdot,u(\cdot))\in\LLL:\:u_1\le u \le u_2\text{ a.e. in $D$} \}
		\end{align*}
		is uniformly integrable in $D$ with respect to the measure $\de(x)dx$, hence bounded in $\LLL$.
	\end{lem}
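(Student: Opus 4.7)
The plan is a contradiction argument via a diagonal ``patchwork'' construction, in the spirit of the classical uniform-integrability arguments of Brezis--Browder and Montenegro--Ponce. Suppose $\mathcal{F}$ fails to be uniformly integrable with respect to $\de(x)\,dx$. Then there exist $\varepsilon_0>0$ and, for every $\delta>0$, some $u$ with $u_1\le u\le u_2$ a.e.\ and a measurable $A\subset D$ such that $\int_A \de(x)\,dx<\delta$ and $\int_A |f(x,u(x))|\de(x)\,dx\ge \varepsilon_0$. The first step would be to inductively build sequences $(u_n)_{n\in\N}$ (with $u_1\le u_n\le u_2$ a.e.) and $(A_n)_{n\in\N}\subset \BB(D)$ satisfying
\[
\textrm{(i)}\ \int_{A_n}\de(x)\,dx<2^{-n},\ \int_{A_n}|f(x,u_n(x))|\de(x)\,dx\ge \varepsilon_0;\qquad \textrm{(ii)}\ \int_{A_n}|f(x,u_k(x))|\de(x)\,dx<\tfrac{\varepsilon_0}{2^{n+1}}\ \text{for every}\ k<n.
\]
Condition (ii) is feasible at step $n$ because each of the finitely many finite measures $|f(\cdot,u_k(\cdot))|\de(x)\,dx$, $k<n$, is absolutely continuous with respect to $\de(x)\,dx$; one simply shrinks the threshold coming from the failure of uniform integrability enough to control all previous $u_k$ simultaneously.

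Next comes the patchwork. By Borel--Cantelli applied to $\sum_n\int_{A_n}\de(x)\,dx<\infty$, almost every $x\in D$ lies in at most finitely many $A_n$'s, so $n(x):=\max\{k:x\in A_k\}$ is defined a.e.\ on $\bigcup_n A_n$. Put
\[
v(x):=u_{n(x)}(x)\,\1_{\bigcup_n A_n}(x)+u_1(x)\,\1_{D\setminus\bigcup_n A_n}(x).
\]
Then $v$ is measurable and $u_1\le v\le u_2$ a.e., so $v\in\LLL$ and, by the standing hypothesis of the lemma, $f(\cdot,v(\cdot))\in\LLL$. Set $B_n:=A_n\setminus\bigcup_{k>n}A_k$; the $B_n$'s are pairwise disjoint, $v=u_n$ on $B_n$, and (i)--(ii) together yield
\[
\int_{B_n}|f(x,v(x))|\de(x)\,dx\ge \int_{A_n}|f(x,u_n(x))|\de(x)\,dx-\sum_{k>n}\int_{A_k}|f(x,u_n(x))|\de(x)\,dx\ge \varepsilon_0-\sum_{k>n}\frac{\varepsilon_0}{2^{k+1}}\ge \frac{\varepsilon_0}{2}.
\]
Summing over $n$ contradicts $f(\cdot,v(\cdot))\in\LLL$, so $\mathcal{F}$ is uniformly integrable. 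Boundedness in $\LLL$ then follows because $\int_D\de(x)\,dx<\infty$: applying the definition of uniform integrability with $\varepsilon=1$ produces some $\delta>0$, and partitioning $D$ into finitely many pieces of $\de$-measure less than $\delta$ yields a uniform $\LLL$-bound.

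The only delicate point is organising (ii) so that it simultaneously yields the two distinct bounds needed in the final display: the sum there runs over $k>n$, and the required estimate for $\int_{A_k}|f(x,u_n(x))|\de(x)\,dx$ is obtained from (ii) \emph{at step $k$}, where $u_n$ is among the already-fixed functions. This ``prospective'' aspect forces the inductive choice of thresholds and is the main (in fact, only) obstacle in the argument. Continuity of $f$ in the second variable is not used; what matters is merely the absolute continuity of each $|f(\cdot,w(\cdot))|\de(x)\,dx$ with respect to $\de(x)\,dx$, which is already guaranteed by the hypothesis.
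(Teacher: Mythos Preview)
Your proof is correct and follows essentially the same strategy as the paper: argue by contradiction and build a ``patchwork'' function $v$ with $u_1\le v\le u_2$ for which $f(\cdot,v(\cdot))\notin\LLL$. The only difference is that the paper outsources the extraction of disjoint sets with mass $\ge\varepsilon/2$ to \cite[Lemma~2.1]{montenegro2008sub} as a black box, whereas you prove this step inline via the inductive condition (ii) and Borel--Cantelli; your version is thus a self-contained unpacking of the same Montenegro--Ponce argument.
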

	\begin{proof}
		Before we start the proof, we refer the reader to \cite[Chapter 16]{schilling2017measures} for details on the uniform integrability. Also, the proof is motivated by the proof of  a  similar claim which can be found in \cite[Section 2]{montenegro2008sub}.
		
		Suppose that the family $\mathcal{F}$ is not uniformly integrable. Then there is $\varepsilon>0$, a sequence $(v_n)_n\subset \LLL$ such that $u_1\le v_n\le u_2$ a.e. in $D$, and a sequence $(E_n)_n$ consisting of measurable subsets of $D$ with property
		\begin{align*}
			\int_{E_n}|f(x,v_n(x))|\de(x)dx\ge\varepsilon,\quad n\in\N.
		\end{align*}
		Now use \cite[Lemma 2.1]{montenegro2008sub} with $w_n(\cdot)=|f(\cdot,v_n(\cdot))|\de(\cdot)/\varepsilon\in L^1(D)$ to extract a subsequence $(v_{n_k})_k$ of $(v_n)_n$ and disjoint sets $F_k\subset E_{n_k}$ such that
		\begin{align*}
			\int_{F_k}|f(x,v_{n_k}(x))|\de(x)dx\ge\frac\varepsilon2,\quad k\in\N.
		\end{align*}
		To finish the proof, define 
		\begin{align*}
			v(x)=\begin{cases}
				v_{n_k}(x),&x\in F_k,\\
				u_1(x), &x\in \cap_{k=1}^\infty F_k^c.
			\end{cases}
		\end{align*}
		We have $u_1\le v\le u_2$ in $D$, hence $v\in \LLL$. Further,
		\begin{align*}
			\int_D |f(x,v(x))|\de(x)dx\ge \sum_{k=1}^\infty\int_{F_k} |f(x,v_{n_k}(x))|\de(x)dx=\infty,
		\end{align*}
		which is a contradiction.
	\end{proof}
	
	\subsection{Regularity of transition densities}\label{ss:heat kernel regularity}
	The following result on the regularity up to the boundary of the transition kernel of the killed Brownian motion appears to be well known, but we were unable to find an exact reference. In the article we assumed that $D$ is a $C^{1,1}$ bounded domain, but this result we give for a slightly more general open set since the claim is important in itself.
	\begin{lem}\label{apx:l:regularity of heat kernel}
		Let $D$ be an open bounded $C^{1,\alpha}$ domain for some $\alpha\in (0,1]$. For the transition density $p_D(\cdot,\cdot,\cdot)$ of the killed Brownian motion upon exiting the set $D$ it holds that $p_D\in C^1((0,\infty)\times \overline D\times \overline D)$.
	\end{lem}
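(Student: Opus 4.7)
The natural strategy is to exploit the spectral representation of the Dirichlet heat kernel together with up-to-the-boundary elliptic regularity estimates for the Dirichlet eigenfunctions on a $C^{1,\alpha}$ domain. Since $D$ is bounded and $C^{1,\alpha}$, the Dirichlet Laplacian $-\left.\Delta\right\vert_D$ has a discrete spectrum $0<\lambda_1<\lambda_2\le\cdots$ with a Hilbert basis of eigenfunctions $\{\varphi_j\}_{j\in\N}$ in $L^2(D)$, and by Mercer's theorem the heat kernel admits the spectral expansion
\begin{equation}\label{apx:plan-spectral}
p_D(t,x,y)=\sum_{j=1}^\infty e^{-\lambda_j t}\varphi_j(x)\varphi_j(y),\quad t>0,\; x,y\in D,
\end{equation}
with convergence in $L^2(D\times D)$ and locally uniformly in the interior. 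The plan is to upgrade this convergence to uniform convergence of the series and of its formal first derivatives on $[t_0,\infty)\times\overline D\times\overline D$ for every $t_0>0$.

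First I would establish polynomial-in-$\lambda_j$ bounds of the form
\begin{equation}\label{apx:plan-eigen}
\|\varphi_j\|_{L^\infty(D)}+\|\nabla\varphi_j\|_{L^\infty(D)}\le C(1+\lambda_j)^N
\end{equation}
for some $N=N(d)$ and $C=C(d,D,\alpha)$, valid up to the boundary. The interior $L^\infty$ bound follows from bootstrapping the equation $-\Delta\varphi_j=\lambda_j\varphi_j$ via Sobolev embeddings. The boundary piece is the nontrivial ingredient: for a $C^{1,\alpha}$ domain one invokes the global $W^{2,p}$-theory of Agmon--Douglis--Nirenberg (or Lieberman's boundary regularity for solutions of the Dirichlet problem) to conclude that $\varphi_j\in C^{1,\beta}(\overline D)$ for some $\beta\in(0,\alpha]$, with the scaling of the $C^{1,\beta}(\overline D)$-norm controlled by a power of $\lambda_j$ times $\|\varphi_j\|_{L^2(D)}=1$. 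This step, for which the $C^{1,\alpha}$ regularity of $\partial D$ is decisive, will be the main obstacle to push through cleanly, since the $C^{1,1}$ version used elsewhere in the paper is not directly available here.

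Once \eqref{apx:plan-eigen} is in hand, Weyl's asymptotic $\lambda_j\asymp j^{2/d}$ produces, for any $t_0>0$, the summability
\begin{equation*}
\sum_{j=1}^\infty (1+\lambda_j)\,(1+\lambda_j)^{2N}\,e^{-\lambda_j t_0}<\infty,
\end{equation*}
which dominates both the series \eqref{apx:plan-spectral} and its termwise $t$-derivative $-\sum_j \lambda_j e^{-\lambda_j t}\varphi_j(x)\varphi_j(y)$ as well as the termwise spatial gradients $\sum_j e^{-\lambda_j t}\nabla\varphi_j(x)\varphi_j(y)$ and $\sum_j e^{-\lambda_j t}\varphi_j(x)\nabla\varphi_j(y)$ uniformly on $[t_0,\infty)\times\overline D\times\overline D$. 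The uniform Weierstrass-type convergence of each series then legitimizes termwise differentiation and delivers joint continuity of $p_D,\partial_t p_D,\nabla_x p_D$ and $\nabla_y p_D$ on $[t_0,\infty)\times\overline D\times\overline D$.

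Finally, since $t_0>0$ was arbitrary, one concludes $p_D\in C^1((0,\infty)\times\overline D\times\overline D)$. Symmetry of $p_D$ in $(x,y)$ allows one to treat the $y$-derivative identically to the $x$-derivative, so no extra work is required there. If one prefers to bypass the spectral expansion, an alternative is to fix $t_0>0$, note that $v(t,x):=p_D(t_0+t,x,y)$ solves the homogeneous heat equation in $(0,\infty)\times D$ with vanishing Dirichlet data and with $v(0,\cdot)=p_D(t_0,\cdot,y)\in L^2(D)$ supported in $\overline D$, and apply the up-to-the-boundary parabolic Schauder/$W^{2,p}$ theory for $C^{1,\alpha}$ domains; this gives the same conclusion but funnels the technical work through the same boundary regularity result for the Dirichlet problem.
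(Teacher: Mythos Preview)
Your proposal is correct and follows essentially the same route as the paper: spectral expansion of $p_D$, polynomial-in-$\lambda_j$ bounds on $\|\varphi_j\|_{L^\infty}$ and $\|\nabla\varphi_j\|_{L^\infty}$ up to the boundary, Weyl's law, and termwise differentiation via uniform Weierstrass-type convergence on $[t_0,\infty)\times\overline D\times\overline D$. The only point worth noting is that the ``main obstacle'' you flag---boundary regularity on a merely $C^{1,\alpha}$ domain---is handled directly in the paper by invoking \cite[Theorems 8.33 and 8.34]{gilbarg_pde}, which give $\varphi_j\in C^{1,\alpha}(\overline D)$ with $\|\varphi_j\|_{C^{1,\alpha}(\overline D)}\le c(1+\lambda_j)\|\varphi_j\|_{L^\infty(D)}$ on $C^{1,\alpha}$ domains; combined with the standard bound $\|\varphi_j\|_{L^\infty}\lesssim\lambda_j^{d/4}$ this yields the explicit exponent $N=d/4+1$, so no detour through $W^{2,p}$ theory is needed.
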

	\begin{rem}\label{apx:r:regularity of heat kernel}
		Moreover, we will see in the proof of the previous lemma that $p_D$ is somehow independently regular,  variable by variable. E.g. we can differentiate $p_D(t,x,y)$ in $x$ up to the boundary, then differentiate the obtained function in $y$ up to the boundary, and then differentiate in $t$ as many times as we want. This can be done up to $C^{1,\alpha}(\overline D)$  regularity in the second and the third variable and up to $C^\infty(0,\infty)$ regularity in the first variable.
	\end{rem}
	\begin{proof}[\textbf{Proof of Lemma \ref{apx:l:regularity of heat kernel}}]
		Note that $p_D(t,x,y)\le p(t,x,y)$ everywhere by \eqref{eq:trans.dens. KBM} so for fixed $t>0$ and $x\in D$ we have that the mapping $y\mapsto p_D(t,x,y)$ is in $L^\infty(D)\subset L^2(D)$. Hence, by the spectral representation of $L^2(D)$ functions we have
		\begin{align}\label{eq:heat spectral rep}
			p_D(t,x,y)=\sum_{j=1}^\infty e^{-\lambda_j t}\varphi_j(x)\varphi_j(y),
		\end{align}
		where we have used \eqref{eq:Pt=e{-lt}}.
		
		Now we show that the sum in \eqref{eq:heat spectral rep} converges uniformly and is bounded in  a  certain strong sense. First  note that $\varphi_j\in C^{1,\alpha}(\overline D)$ by \cite[Theorem 8.34]{gilbarg_pde}. Furthermore, by \cite[Theorem 8.33]{gilbarg_pde} the following estimate holds 
		\begin{align}\label{eq:eigen estimates}
			\|\varphi_j\|_{C^{1,\alpha}(D)}\le c_1(1+\lambda_j)\|\varphi_j\|_{L^\infty(D)},
		\end{align}
		where $\|\cdot \|_{C^{1,\alpha}(D)}$ is the standard ${C^{1,\alpha}(D)}$ H\"older norm and $c_1=c_1(d,D)>0$.
		Also, the eigenvalues satisfy the well known estimate
		\begin{align}\label{eq:eigen estimate 1}
			\|\varphi_j\|_{L^\infty(D)}\le c_2 \lambda_j^{d/4} \|\varphi_j\|_{L^2(D)}=c_2 \lambda_j^{d/4},
		\end{align}
		see e.g. \cite[Theorem 1.6]{BHV_2015_eigen_estimate}, where $c_2=c_2(d)>0$. In particular, this inequality and the inequality in \eqref{eq:eigen estimates} imply
		\begin{align}\label{eq:eigen estimates 2}
			\|\nabla \varphi_j\|_{L^\infty(D)}\le  C_1(1+\lambda_j)\|\varphi_j\|_{L^\infty(D)}\le c_3(1+\lambda_j)^{d/4+1},
		\end{align}
		for $c_3=c_3(d,D)>0$.
		Also note that since $\varphi_j$ vanishes on the boundary, by the mean-value theorem for every $x\in D$ there is some $\tilde x$ between $x$ and the closest boundary point to $x$ such that $$\left|\frac{\varphi_j(x)}{\de(x)}\right|=\left|\nabla\varphi_j(\tilde x)\right|\le \|\nabla \varphi_j\|_{L^\infty(D)}.$$
		Hence,  for the sum in \eqref{eq:heat spectral rep} the following uniform bound holds
		\begin{align}
			\begin{split}\label{eq:sum bound1}
				\sum_{j=1}^\infty e^{-\lambda_j t}|\varphi_j(x)||\varphi_j(y)|&\le \sum_{j=1}^\infty e^{-\lambda_j t}\|\varphi_j\|_{L^\infty(D)}\|\varphi_j\|_{L^\infty(D)}\\&\le c_2^2 \sum_{j=1}^\infty e^{-\lambda_j t} \lambda_j^{d/2}<\infty,
			\end{split}\\
			\begin{split}\label{eq:sum bound2}
				\sum_{j=1}^\infty e^{-\lambda_j t}|\varphi_j(x)|\left|\frac{\varphi_j(y)}{\de(y)}\right|&\le \sum_{j=1}^\infty e^{-\lambda_j t}\|\varphi_j\|_{L^\infty(D)}\|\nabla\varphi_j\|_{L^\infty(D)}\\&\le c_4 \sum_{j=1}^\infty e^{-\lambda_j t} (1+\lambda_j)^{d/2+1}<\infty,
			\end{split}
		\end{align}
		where $c_4=c_4(d,D)>0$ and the sums converge by Weyl's law, see \eqref{eq:Weyl's law}. Similar bounds hold if we take the derivate by the variable $t$ or by the variable $y$.
		
		Since $\varphi_j\in C^{1,\alpha}(\overline D)$ and since the bounds \eqref{eq:sum bound1} and \eqref{eq:sum bound2} hold, we can pass the needed limits inside the sum \eqref{eq:heat spectral rep} to get $p_D\in C^1((0,\infty)\times \overline D\times \overline D)$.
		
		In addition, since the bounds \eqref{eq:eigen estimates}-\eqref{eq:eigen estimates 2} hold, we can pass the limits inside the sum in the representation \eqref{eq:heat spectral rep} to get that the density $p_D$ is regular,  variable by variable up to $C^{1,\alpha}(\overline D)$  regularity in the second and the third variable and up to $C^\infty(0,\infty)$ regularity in the first variable, see Remark \ref{apx:r:regularity of heat kernel}.
	\end{proof}
	\subsection*{Acknowledgement}
	
	This research was supported in part by the Croatian Science Foundation under the project 4197. The author would like to thank Prof. Zoran Vondraček and Prof. Vanja Wagner for many discussions on the topic and for their helpful comments on the presentation of the results.


	\bibliographystyle{abbrv}
	\bibliography{bibliography_ivan}
	
	\bigskip
	
	\noindent{\bf Ivan Bio\v{c}i\'c}
	
	\noindent Department of Mathematics, Faculty of Science, University of Zagreb, Zagreb, Croatia,
	
	\noindent Email: \texttt{ibiocic@math.hr}
\end{document}